\newcommand{\stkout}[1]{\ifmmode\text{\sout{\ensuremath{#1}}}\else\sout{#1}\fi}
\newcommand{\Z}{\mathbb{Z}}
\newcommand{\R}{\mathbb{R}}
\newcommand{\C}{\mathbb{C}}
\newcommand{\Pone}{\mathbb P ^1}
\newcommand{\Per} {\operatorname{Per} }
\newcommand{\percompact}{{\mathcal Per}}
\newcommand{\im}{\text{Im}}
\newtheorem{theorem}{Theorem}[section]
\newtheorem{proposition}[theorem]{Proposition}
\newtheorem{corollary}[theorem]{Corollary}
\newtheorem{lemma}[theorem]{Lemma}
\theoremstyle{definition}
\newtheorem{definition}[theorem]{Definition}
\newtheorem{remark}[theorem]{Remark}
\newcommand{\per}{\operatorname{Per}}
\newcommand{\Torelliformg}{ \Omega^\pm\mathcal{S}_{g,\text{\sout{2}}}}
\newcommand{\BOmega}{\Omega^{\pm,*}_0}
\newcommand{\mnc}{\overline{\mathcal{S}}}
\newcommand{\mrs}{\mathcal{S}}
\newcommand{\curvesystem}{\bf{c}}
\title{Isoperiodic meromorphic forms: two simple poles}
\author{Gabriel Calsamiglia}
\address{Departamento de Matem\' atica Aplicada, Universidade Federal Fluminense,Rua Professor Marcos Waldemar de Freitas Reis, s/n, Campus Gragoat\' aÂ¡, Bloco G, Niter\' oi, 24210-201 Brazil} 
\email{gcalsamiglia@id.uff.br}
\author{Bertrand Deroin}
\address{CNRS \& Universit\'e Cergy-Pontoise, UMR 8088, 2 av. Adolphe Chauvin, 95302 Cergy-Pontoise Cedex, France}
\email{bertrand.deroin@u-cergy.fr}
\keywords{Isoperiodic foliation, Hodge bundle, isoperiodic differentials, meromorphic forms}
\subjclass[2010]{57M50, 30F30, 53A30, 14H15, 32G13}
\date{2021}
\begin{document}

\maketitle
\begin{abstract}
    In this paper we prove that isoperiodic moduli spaces of meromorphic differentials with two simple poles on homologically marked smooth curves are non empty and connected, unless they correspond to  double covers of $\mathbb{C}/\mathbb{Z}$ on curves of genus at least two. We deduce dynamical consequences for the corresponding isoperiodic foliation. 
\end{abstract}
\section{Introduction}
\subsection{Overview}  This work is the second of a series  aiming at  study the dynamics and topology of the isoperiodic foliations on moduli spaces of meromorphic one forms on algebraic curves. The isoperiodic equivalence relation is defined locally by the condition that the integrals on \textit{every} closed cycle in the domain where the form is holomorphic remain constant. 

Moduli spaces of meromorphic forms with simple poles appear as boundary components in the Deligne-Mumford compactification of moduli spaces of holomorphic one forms (see  \cite{5A18, 5A19}). The understanding of these foliations will complete the picture of the dynamics of isoperiodic foliations on moduli spaces of stable forms given in \cite{CDF2}. 

We consider here the case of moduli spaces of forms on curves of any genus with two poles, both simple. It will constitute the initial step for an inductive argument to similar statements in any number of simple poles (see work in progress in collaboration with Liza Arzakhova in \cite{ACD}).

There is a conceptual reason to treat this case separately: up to scaling, any form with two simple poles has one of residue $+1$ and the other of residue $-1$ and therefore the integral along a closed cycle   defines an element in the topological group $\mathbb{C}/\mathbb{Z}$ that depends only on its homology class of the \textit{closed} surface.


In the case of holomorphic one forms on compact curves, the dynamics of the isoperiodic  foliation is related to the topology of the period mapping, defined on some Torelli cover of the moduli space (see \cite{McMullen, CDF2}). In the present paper, we show that, apart from isoperiodic sets consisting of double covers of a bi-infinite cylinder $\mathbb{C}/\mathbb{Z}$ of genus at least two, the isoperiodic moduli spaces of homologically marked meromorphic differentials with two simple poles are non empty and connected. Related results have been proven recently, see \cite{McMullen, CDF2,Winsor}. As in \cite{CDF2}, this allows to show that the fundamental group of the leaves of the isoperiodic foliation surjects onto the stabilizer in \( \text{Sp} (2g, \mathbb Z) \) of their associated period. Another consequence of interest for the dynamics is that we can transfer the dynamical properties of the action of the group \( \text{Sp} (2g, \mathbb Z) \) on \( (\mathbb C /\mathbb Z)^{2g} \) to properties satisfied by the isoperiodic foliation.

As an application, we compute,  using Ratner's theory ( specifically \cite{G}),  the closed subsets of moduli spaces that are saturated/invariant by the isoperiodic foliation, and find that they are real analytic submanifolds. A qualitative difference with the case of holomorphic differentials is that some transcendental leaves are closed in these quasi-projective manifolds. They correspond to forms whose periods describe a given discrete lattice in $\mathbb{C}$. Such phenomenon occurs for some algebraic foliations, for instance, the one defined by \( dy / dx = y \) in \(\mathbb C^2 \), but it seems to be quite rare to have a transcendental leaf of an algebraic foliation which is closed in a Zariski open subset. See for instance \cite{Camacho/LinsNeto/Sad, Camacho/Scardua} or the work of Cousin on the Garnier system \cite{Cousin}. We note that these leaves are examples of affine  manifolds (in the sense of \cite{Moller_linear}) that are not algebraic. Bakker and Mullane have such an example in moduli spaces with marked points. We do not know an example of a non algebraic affine manifold which is also invariant by the \(SL(2,\mathbb R)\)-action in the moduli space of meromorphic forms (such examples do not exist in moduli spaces of abelian differentials by a result of Filip, see \cite{Filip}).      

Another consequence we derive is the ergodicity with respect to Lebesgue measure of the restriction of the isoperiodic foliation to every closed saturated subset. A particularly interesting closed saturated subset that does not arise in the moduli space of abelian differentials, is the moduli space of meromorphic differentials with two poles and \textit{real periods}.  For every genus $g$ this set identifies with a copy of moduli space \(\mathcal M_{g,2}\), and plays a key role in the alternative proof \cite{GK1} by Grushevsky and Krichever of Diaz' theorem (the non existence of a complete subvariety of dimension \(\geq g-1\) in moduli space \(\mathcal M_{g} \) of curves of genus \(g\)), and in its generalization by Krichever \cite{Krichever} that provides a solution  of Arbarello's conjecture, stating that a compact holomorphic submanifold of \(\mathcal M_{g} \) of dimension \(g-n\) intersects the Weierstrass locus of curves admitting  a meromorphic function with a single pole of order at most \(n\). Recently, Krichever, Lando and Skripchenko proved that the isoperiodic foliation on the moduli space of meromorphic differentials with a single double pole is ergodic, by studying the combinatorics of cut diagrams, see \cite{KLS}. Ergodicity of isoperiodic foliations on various moduli spaces of holomorphic differentials have been established recently, see \cite{McMullen, Hamenstadt, CDF2, Ygouf, CEW}.



\subsection{Statement of results}

Let $\mathcal{M}_{g,n}$ denote the moduli space of smooth genus $g$ complex curves with $n$ marked ordered points. For $g\geq 1$ consider the bundle $\Omega^{\pm}\mathcal{M}_{g,2}\rightarrow \mathcal{M}_{g,2}$ whose fiber over a point $(C,x_-,x_+)$ is the space of meromorphic forms $\omega$ on $C$ having  simple poles on $x_-,x_+$ with residues $-1,+1$ respectively. It is an affine bundle directed by the Hodge (vector) bundle  $\Omega\mathcal{M}_{g,2}\rightarrow\mathcal{M}_{g,2}$ of holomorphic one forms, hence a complex manifold of dimension $4g-1$. 

The choice of \((\pm 1)\)-residues at the two poles of $\omega$ allows to define, for each homological class of closed path $\gamma$ in \(C\), a class in $\mathbb{C}/\mathbb{Z}$, namely \[\big[\int_{\gamma}\omega\big]\in\mathbb{C}/\mathbb{Z}.\]
A deformation of forms $\{\omega_t\}_{t\in[0,1]}$ in $\Omega^\pm\mathcal{M}_{g,2}$ is said to be isoperiodic if the elements in $\mathbb{C}/\mathbb{Z}$ defined above are constant for any choice of homology class. The main aim of this paper is the study of the topological and dynamical properties of the decomposition of the total space in isoperiodic equivalence classes, the so-called isoperiodic foliation.

To be able to compare homology classes on different curves we need to identify their homology groups. It is well known that the orbifold universal cover  $\mathcal{T}_{g,n}\rightarrow \mathcal{M}_{g,n}$ of the moduli space is biholomorphic to the Teichm\" uller space \(\mathcal{T}_{g,n}\). The fiber over a point \((C,(q_1,\ldots,q_n))\in\mathcal{M}_{g,n}\) is  given by the different possible identifications of the fundamental group of $C$ with the fundamental group of a reference surface of genus $g$ with $n$ marked ordered points  $\Sigma_{g,n}$. In this representation, the covering group is the mapping class group $\text{Mod}(\Sigma_{g,n})$ of isotopy classes of diffeomorphisms of $\Sigma_g$ that fix each marked point. 

Let \(\Sigma_{g,2}=(\Sigma_g,p_-,p_+)\) be an oriented closed surface of genus \(g\) with two ordered distinct marked points. Since we only want to keep the information of the identification at the homology level of the curve without the information on the marked points, we consider the quotient of $\mathcal{T}_{g,2}$ by the subgroup $\mathcal{I}(\Sigma_{g,\text{\sout{2}}})\subset\text{Mod}(\Sigma_{g,2})$ formed by the elements that act trivially on \(H_1(\Sigma_g,\mathbb{Z})\) \footnote{The notation is chosen to distinguish it from other known subgroups of $\text{Mod}(\Sigma_{g,2})$ characterized by the (trivial) action on some other homology groups:  $\mathcal{I}(\Sigma_{g,2})$ is the subgroup that fixes every class in the relative homology group $H_1(\Sigma_g,p_-,p_+;\mathbb{Z})$ and $\mathcal{I}(\Sigma_{g,2^*})$ the subgroup that acts trivially on the punctured surface homology group $H_1(\Sigma_g\setminus\{p_-,p_+\},\mathbb{Z})$}. We denote by \[\mathcal{S}_{g,\text{\sout{2}}}=\frac{\mathcal{T}_{g,2}}{\mathcal{I}(\Sigma_{g,\text{\sout{2}}})}\] the quotient space. To each point in it there corresponds a  tuple \((C,x_-,x_+,m)\) where \((C,x_-,x_+)\)  is a smooth complex curve of genus \(g\) with two marked points and  \( m : H_1 (\Sigma_g, \Z) \rightarrow H_1 (C, \Z)\) is an isomorphism, up to the equivalence \((C,x_-,x_+, m) \sim (C',x'_-,x'_+, m') \) if there exists a biholomorphism \( h : (C,x_-,x_+)\rightarrow (C',x'_-,x'_+) \) such that \( m' = h_* \circ m\). The mapping class group of $\Sigma_{g,2}$ acts on \(\mathcal{S}_{g,\text{\sout{2}}}\) by precomposition on the marking and the quotient map gives a covering map 
\begin{equation} \label{eq:torellicover}
    \mathcal{S}_{g,\text{\sout{2}}}
\rightarrow\mathcal{M}_{g,2}
\end{equation} with covering group $\text{Sp}(2g,\mathbb{Z})$.
The bundle $\Omega^{\pm}\mathcal{M}_{g,2}\rightarrow \mathcal{M}_{g,2}$ can be pulled back to $\mathcal{S}_{g,\text{\sout{2}}}$ by using the map \eqref{eq:torellicover} producing a bundle $\Omega^{\pm}\mathcal{S}_{g,\text{\sout{2}}}\rightarrow \mathcal{S}_{g,\text{\sout{2}}}$ whose fiber over a point $(C,x_-,x_+,m)$ is the vector space $\Omega^{\pm}C$ of meromorphic forms $\omega$ on $C$ having two simple poles of residues $-1$ and $+1$ at $x_-$ and $x_+$ respectively.  An element in $\Omega^{\pm}\mathcal{S}_{g,\text{\sout{2}}}$ will sometimes be denoted as a triple $(C,m,\omega)$, since the pole information is already in $\omega$.

\begin{definition}
The period of $(C,m,\omega)\in \Omega^\pm\mathcal{S}_{g,\text{\sout{2}}}$ is the homomorphism \[\per (C,m,\omega): H_1(\Sigma_g,\mathbb{Z})\rightarrow \mathbb{C}/\mathbb{Z}\text{   defined by}\] \[\gamma\mapsto\big[\int_{m(\gamma)}\omega\big].\] 
The period map $\per_{g}$ on \(\Omega^\pm\mathcal{S}_{g,\text{\sout{2}}}\) is the map   \[\per_g:\Omega^\pm\mathcal{S}_{g,\text{\sout{2}}}\rightarrow H^1(\Sigma_g,\mathbb{C}/\mathbb{Z}) \]defined by \((C,m,\omega)\mapsto \per(C,m,\omega)\in\text{Hom}(H_1(\Sigma_g,\mathbb{Z});\mathbb{C}/\mathbb{Z})\simeq H^1(\Sigma_g,\mathbb{C}/\mathbb{Z}) \).
\end{definition}

In Proposition \ref{p:period map on smooth curves} we will prove that $\per_g$ is a holomorphic submersion. The underlying foliation is therefore regular and holomorphic. 

\begin{definition}
 The degree of a homomorphism \(p:H_1(\Sigma_g,\mathbb{Z})\rightarrow\mathbb{C}/\mathbb{Z}\) with image \(\Lambda\) is the cardinality of \(\Lambda\), that we denote \(|\Lambda|\). 
 
\end{definition}

\begin{remark}
If the period homomorphism of an element$ (C,m,\omega)\in\Omega^\pm\mathcal{S}_{g,\text{\sout{2}}}$ has finite degree $d<\infty$, then the map \(z\mapsto e^{\frac{1}{2\pi i}\int_{z_0}^{z}\omega}\) extends to a branched cover $C\rightarrow\mathbb{C}\cup\infty$ of the Riemann sphere of topological degree $d$. In particular if $d=1$ the only possibility is $g=0$. 
\end{remark}


This is actually the only obstruction.  
\begin{theorem}\label{t:realization}
Let $g\geq 1$. The fiber of the period map $\per_g$ over a non-trivial homomorphism is non-empty. 
\end{theorem}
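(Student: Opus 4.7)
The proof will proceed by induction on the genus $g$, with an explicit Weierstrass construction in the base case $g=1$ and a translation-surface handle-addition surgery for the inductive step.

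\textbf{Base case $g = 1$.} Fix a non-trivial target $(\alpha, \beta) = (p(a), p(b)) \in (\mathbb{C}/\mathbb{Z})^2$. For any lattice $L = \mathbb{Z}\lambda_1 + \mathbb{Z}\lambda_2 \subset \mathbb{C}$, set $E = \mathbb{C}/L$, pick two distinct marked points $x_-, x_+ \in E$, and consider
\[
\omega = \bigl(\zeta_L(z - x_+) - \zeta_L(z - x_-) + c\bigr) \, dz
\]
with $c \in \mathbb{C}$. The quasi-period increments of the two Weierstrass zeta terms cancel, so $\omega$ descends to a meromorphic form on $E$ with simple poles of residues $\pm 1$ at $x_\pm$. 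Integrating via the primitive $\log \sigma_L$ gives $\int_a \omega \equiv -\eta_1 \delta + c\lambda_1$ and $\int_b \omega \equiv -\eta_2 \delta + c \lambda_2 \pmod{2\pi i \mathbb{Z}}$, where $\delta = x_+ - x_-$. By Legendre's relation $\eta_1 \lambda_2 - \eta_2 \lambda_1 = 2\pi i$, the affine system in $(\delta, c)$ has invertible coefficient matrix, so one solves for $(\delta, c)$ matching any target; a generic choice of $L$ ensures $\delta \neq 0$ in $E$.

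\textbf{Inductive step.} Suppose the theorem holds for $g$. Given a non-trivial $p : H_1(\Sigma_{g+1}, \mathbb{Z}) \to \mathbb{C}/\mathbb{Z}$, pick a primitive class $\gamma$ with $p(\gamma) \neq 0$ and extend it (by transitivity of $\mathrm{Sp}(2g+2, \mathbb{Z})$ on primitives) to a symplectic basis $(a_i, b_i)_{i=1}^{g+1}$ with $\gamma = a_1$. The restriction of $p$ to the sub-Lagrangian $H' = \langle a_1, b_1, \ldots, a_g, b_g \rangle \cong H_1(\Sigma_g, \mathbb{Z})$ is non-trivial (since $a_1 \in H'$), so by the inductive hypothesis it is realized by some $(C_0, m_0, \omega_0) \in \Omega^\pm \mathcal{S}_{g, \text{\sout{2}}}$. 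Now add a handle carrying the remaining periods: the flat structure of $\omega_0$ on $C_0 \setminus \{x_-, x_+\}$ has two bi-infinite cylindrical ends (because of the simple poles of residues $\pm 1$), hence contains arbitrarily large flat disks. Lift $p(a_{g+1}), p(b_{g+1})$ to vectors $v, w \in \mathbb{C}$ of sufficiently large norm, cut two disjoint parallel slits of direction $v$ and offset $w$ in such a disk, and glue their sides crosswise. This produces a smooth genus $g+1$ Riemann surface $C$ carrying a meromorphic form $\omega$ with the same two simple poles and residues, and the two new cycles passing through the handle have periods $v, w$; extending the marking $m_0$ so that $a_{g+1}, b_{g+1}$ map to these cycles gives the desired point of $\Omega^\pm \mathcal{S}_{g+1, \text{\sout{2}}}$ with period $p$.

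\textbf{Main obstacle.} The technical heart is the handle-addition surgery. One must place the slits inside the regular part of the flat structure (possible thanks to the unbounded cylindrical ends at the poles together with the freedom to shift the lifts $v, w$ by integers), check that the surgered surface is smooth and that the glued form keeps exactly the two simple poles with residues $\pm 1$, and verify that the extended homological marking really makes the new cycles realize the correct targets $p(a_{g+1}), p(b_{g+1})$. These are standard operations in the theory of translation surfaces with meromorphic ends, but the homological bookkeeping for the marking is the most delicate point.
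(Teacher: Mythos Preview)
Your genus-one argument via Weierstrass \(\zeta\) is correct up to a \(2\pi i\) normalization (your residues are \(\pm 1\) in the analytic sense, so periods land in \(\mathbb{C}/2\pi i\mathbb{Z}\), not \(\mathbb{C}/\mathbb{Z}\); rescale).

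The inductive step has a genuine gap. You assert that the flat structure near the simple poles ``contains arbitrarily large flat disks'', but this is false: a simple pole of residue \(\pm 1\) gives a \emph{half}-infinite cylinder of fixed circumference \(1\), so embedded flat disks there have radius at most \(\tfrac12\). Your prescription to take lifts \(v,w\) of \emph{large} norm therefore works against you. A two-slit handle attachment in such a cylinder can sometimes be arranged by exploiting its unbounded \emph{length} together with the \(\mathbb{Z}\)-freedom in the lifts, but this needs case analysis you have not done: when both target handle-periods lie in \(\mathbb{R}/\mathbb{Z}\) every lift is real, the slits are horizontal, and they may be forced to overlap or to wrap the cylinder (for instance if \(p(b_{g+1})=0\) then the offset \(w\) is an integer and the two slits coincide in \(\mathbb{R}/\mathbb{Z}\)). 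Further symplectic basis changes could rescue this, but the argument as written does not supply them.

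The paper avoids flat surgery entirely. It finds a \(p\)-admissible decomposition of \(H_1(\Sigma_g)\) into rank-two symplectic pieces, realizes each \(p|_{V_i}\) on a genus-one curve with two simple poles (the \(g=1\) case), and then glues these elliptic pieces \emph{pole to pole} into a stable nodal form \(\omega_1\vee\cdots\vee\omega_k\) lying in the boundary of the augmented Torelli space. Smoothing the nodes---using that the local isoperiodic fiber is transverse to every boundary divisor---then produces the desired form on a smooth curve. Gluing at the poles carries no embedding constraint, which is exactly what your slit construction is missing.
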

Aiming to applying a Transfer Principle analogous to the case of holomorphic forms in  \cite{CDF2} we consider the problem of the connectedness of the fibers of $\per_g$: 
\begin{theorem}\label{t:Cgn}
Let $g\geq 1$. The fiber of the period map $\per_{g}$ over a homomorphism of degree at least three is connected. 
\end{theorem}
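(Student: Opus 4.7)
The strategy is induction on the genus $g$, with the base case $g=1$ handled by an explicit description, and the inductive step carried out by degenerating each representative in the fibre to a boundary stratum where the problem splits into two lower-genus pieces.

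\emph{Base case $g=1$.} An element $(E, m, \omega) \in \Omega^\pm\mathcal{S}_{1,\text{\sout{2}}}$ is recovered, up to the $1$-dimensional affine freedom of adding a scalar multiple of the unique holomorphic $dz$, from the marked lattice of $E$ and the locations of the two poles. For a non-trivial $p$ with $|\Lambda|\geq 3$, this data admits an explicit parametrization of the fibre as a connected space (concretely, a connected quotient of an affine space by a natural group action), so $\per_1^{-1}(p)$ is connected.

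\emph{Inductive step.} Suppose the statement holds in genus $<g$, and take two points $(C_i, m_i, \omega_i)$, $i=0,1$, in the same fibre of $\per_g$ with image $\Lambda$ of cardinality $\geq 3$. I would join them by an isoperiodic path in three moves:
\begin{enumerate}
    \item \emph{Degeneration to a boundary stratum.} Using a plumbing/pinching construction in a partial compactification of $\Omega^\pm\mathcal{S}_{g,\text{\sout{2}}}$, deform each $(C_i, m_i, \omega_i)$ isoperiodically to a form on a stable nodal curve whose unique node is separating and isolates $x_-$ from $x_+$. Since a separating cycle is null-homologous in $H_1(\Sigma_g,\mathbb{Z})$, pinching it automatically preserves $p$; the residue at the node is forced to be $\pm 1$ by the residue theorem. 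The normalization is a pair of smooth curves of genera $g_1, g_2$ with $g_1+g_2=g$, each carrying a meromorphic form with two simple poles of residues $\pm 1$.
    \item \emph{Connectedness on the boundary.} Choose the separation so that the period $p$ restricts on each side $H_1(\Sigma_{g_i})\hookrightarrow H_1(\Sigma_g)$ to a homomorphism of image order $\geq 3$ --- this is the step that truly uses the hypothesis $|\Lambda|\geq 3$. By the inductive hypothesis applied to both sides (when $g_i\geq 1$; the case $g_i=0$ is trivial), the two degenerated representatives are connected component-by-component, and the plumbing ``twist'' parameter at the node moves through a connected complex parameter space.
    \item \emph{Lifting back to the smooth locus.} Inverting the plumbing produces a path in $\Omega^\pm\mathcal{S}_{g,\text{\sout{2}}}$ joining the original pair, along which the period is constant by construction.
\end{enumerate}

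\emph{Main obstacle.} The crux is step (2): one must verify that a separating splitting exists for which the restriction of $p$ to each factor has image of cardinality $\geq 3$, so that the inductive hypothesis applies. When $|\Lambda|=2$ no such splitting is available --- this is precisely the exceptional case of double covers of $\mathbb{C}/\mathbb{Z}$ excluded from the statement. A secondary technical point is checking that the plumbing construction is well-defined at the level of the homological cover $\mathcal{S}_{g,\text{\sout{2}}}$ rather than $\mathcal{M}_{g,2}$, i.e.\ that the marking $m$ extends through the deformation; this amounts to controlling the Dehn twist around the vanishing cycle, which acts on the marking by an element of $\mathcal{I}(\Sigma_{g,\text{\sout{2}}})$ and is therefore invisible in the quotient.
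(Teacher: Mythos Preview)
Your overall architecture matches the paper's: induction on $g$, degeneration to a separating-node boundary, and application of the inductive hypothesis to each part. However, step (2) contains a genuine gap that is in fact the heart of the proof.

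\textbf{The missing step: different decompositions.} When you degenerate $(C_0,m_0,\omega_0)$ and $(C_1,m_1,\omega_1)$ isoperiodically to simple boundary points, each one determines a symplectic decomposition $H_1(\Sigma_g)=V_1\oplus V_2$ (resp.\ $V_1'\oplus V_2'$) into factors on which $p$ has degree $\geq 3$. There is no reason these two decompositions should coincide: the degeneration is a geometric process driven by the flat structure of $\omega_i$, not by a choice you get to make freely. Your phrase ``choose the separation'' hides this. The inductive hypothesis lets you connect two boundary points \emph{within the same stratum} (same ordered decomposition), but to pass from one decomposition to another you need a further argument. The paper handles this by proving (Theorem~\ref{t:graph of admissible}) that the graph of $[p]$-admissible decompositions of $H_1(\Sigma_g)$ is connected, so that any two such decompositions are linked by a chain of decompositions sharing a common factor; each link is then realized geometrically by a boundary point with more nodes.

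\textbf{Consequence for the base case.} That algebraic connectedness result requires $\mathrm{rank}\,H_1(\Sigma_g)\geq 6$, i.e.\ $g\geq 3$. Hence $g=2$ cannot be absorbed into the induction and must be proved separately. The paper does this by an entirely different method: the forgetful map $\per_2^{-1}(p)\to\mathcal{S}_{2,0}$ is a branched double cover of a connected set, and one exhibits an explicit branch point (an odd form with respect to the hyperelliptic involution) to conclude connectedness. Your proposal has no replacement for this.

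\textbf{A secondary omission.} Step (1) is also substantial: producing an isoperiodic path from an arbitrary smooth form to a boundary point whose node separates the two poles \emph{and} both sides have degree $\geq 3$ is the content of the paper's Section~\ref{s:degenerating to a simple boundary point}, carried out via Schiffer variations and a case analysis (real vs.\ non-real periods, with intermediate passage through non-separating nodes in the latter case). ``Plumbing/pinching'' does not by itself give an isoperiodic degeneration.
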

\begin{theorem}\label{t:disconnected fibers}
The fiber of $\per_{g}$ over a  degree two homomorphism is disconnected if $g\geq 2$ and connected if $g=1$.  
\end{theorem}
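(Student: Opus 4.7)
The plan is to exploit the Remark to extract from each fiber element a locally constant discrete invariant that distinguishes components when $g \geq 2$ but is forced to be unique when $g = 1$. For $(C, m, \omega)$ in the fiber over a degree-2 period $p$, the Remark gives a degree-2 branched cover $F : C \to \mathbb{P}^1$ fully ramified at $x_\pm$ and simply ramified at the $2g$ zeros $y_1, \ldots, y_{2g}$ of $\omega$; hence $C$ is hyperelliptic and $\{x_\pm, y_1, \ldots, y_{2g}\}$ is its Weierstrass set. Each difference $y_i - x_-$ is 2-torsion in $\mathrm{Jac}(C)$, and via the natural isomorphism $\mathrm{Jac}(C)[2] \cong H^1(C, \mathbb{Z}/2)$ and the pullback $m^* : H^1(C, \mathbb{Z}/2) \to H^1(\Sigma_g, \mathbb{Z}/2)$, I obtain an unordered multiset of distinct non-zero classes
\[
\mathrm{Inv}(C, m, \omega) = \{\!\!\{ \chi_1, \ldots, \chi_{2g} \}\!\!\}, \quad \chi_i := m^*[y_i - x_-].
\]
Local constancy under isoperiodic deformation is immediate: each $\chi_i(t)$ is a continuous map into the discrete set $H^1(\Sigma_g, \mathbb{Z}/2)$ (the identification $\mathrm{Jac}(C_t)[2] \cong H^1(\Sigma_g, \mathbb{Z}/2)$ being locally trivial on $\mathcal{S}_{g, \text{\sout{2}}}$), hence locally constant. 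The divisor relation $\sum y_i \sim x_- + x_+ + K_C$ combined with the hyperelliptic identity $K_C \sim (2g-2) x_-$ yields the constraint $\sum_i \chi_i = m^*[x_+ - x_-]$, a fixed element of $H^1(\Sigma_g, \mathbb{Z}/2) \cong (\mathbb{Z}/2)^{2g}$ determined by $p$.

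For $g \geq 2$, this constraint admits multiple solutions consisting of $2g$ distinct non-zero elements of $(\mathbb{Z}/2)^{2g}$; realizing at least two of them by actual fiber elements---starting from the element provided by Theorem~\ref{t:realization} and acting by a period-preserving mapping class outside $\mathcal{I}(\Sigma_{g, \text{\sout{2}}})$, or by an explicit deformation of the branch configuration on $\mathbb{P}^1$---produces at least two connected components, proving disconnectedness.

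For $g = 1$, the constraint forces $\{\chi_1, \chi_2\}$ to be the unique complement of $\{0, m^*[x_+ - x_-]\}$ in $H^1(\Sigma_1, \mathbb{Z}/2) \cong (\mathbb{Z}/2)^2$. Connectedness follows from an explicit parameterization by $\tau \in \mathbb{H}$: normalizing $x_- = 0$ in $C = \mathbb{C}/(\mathbb{Z} + \tau \mathbb{Z})$ via translation and scaling, the 2-torsion point $x_+$ is determined by $p$ and the marking, and the form $\omega = (\zeta(z - x_+) - \zeta(z) + c)\, dz$ is uniquely determined by requiring period $p$---the existence and uniqueness of the constant $c \in \mathbb{C}$ for every $\tau$ being a direct consequence of the Legendre relation between the quasi-periods $\eta_1, \eta_\tau$. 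The resulting holomorphic map $\mathbb{H} \to \Omega^\pm \mathcal{S}_{1, \text{\sout{2}}}$ surjects onto the fiber, which is therefore connected as the continuous image of $\mathbb{H}$. The main technical obstacle I anticipate is the realization step for $g \geq 2$: Theorem~\ref{t:realization} provides only one fiber element without direct control on its invariant, so exhibiting a second with a different invariant may require refining the existence argument or a direct geometric surgery on hyperelliptic double covers that permutes Weierstrass labels while preserving the period.
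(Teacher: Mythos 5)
Your invariant is essentially a dualized form of the Arnold invariant the paper uses: your multiset $\{\!\!\{ \chi_i \}\!\!\}$ of $2g$ two-torsion classes and the paper's class $[A]\in S_{2g}\backslash\operatorname{Hom}(H_1(\Sigma_g,\Z/2),M_{E_0})$ encode the same data, namely how the $\mathbb{Z}/2$-homology of the hyperelliptic cover sits over the branch configuration. Your observations that all zeros of $\omega$ are forced to be simple in degree two (so the branch points can be followed continuously), that the invariant is locally constant, and that the divisor relation pins down its sum, are all correct and match the paper's setup. The genus-one argument is also fine in spirit: for real-valued $p$ the fiber is biholomorphic to $\mathcal{T}_{1,1}$ (Corollary \ref{c:genus one case}), and your Weierstrass-$\zeta$ parameterization realizes that.

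The genuine gap is exactly the one you flag at the end, and it is not a technicality — it is the entire content of the theorem. You need a \emph{proof} that the orbit $\operatorname{Stab}(p)\cdot (C,m,\omega)$ inside $\per_g^{-1}(p)$ hits at least two values of the invariant. Acting by a period-preserving mapping class and "deforming the branch configuration" are the same move (a continuous deformation within the fiber cannot change a locally constant discrete invariant; only precomposing the marking by a nontrivial symplectic automorphism stabilizing $p \bmod 2$ can), so option (b) does not give you anything new. What is actually required is the group-theoretic fact that, for $g\geq 2$, the stabilizer of $p\bmod 2$ in $\operatorname{Sp}(H_1(\Sigma_g,\Z/2))$ is strictly larger than the stabilizer of the Arnold class. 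The paper proves this in Lemma \ref{l:counting finite groups} by a counting argument: the stabilizer of a nonzero covector in $\operatorname{Sp}(2g,\Z/2)$ has order $2^{2g-1}(2^{2g-2}-1)\cdots(2^2-1)\cdot 2$, while the stabilizer of the Arnold class injects into $S_{2g}$ (because $A$ is injective onto $M_{E_0}^{\mathrm{even}}$) and so has order at most $(2g)!$; for $g\geq 2$ the former strictly exceeds the latter. Without some argument of this kind, the proposal does not rule out the a priori possibility that every period-preserving mapping class fixes your multiset, in which case disconnectedness would not follow. You should supply that estimate (or an equivalent explicit construction of two markings of the same $(C,\omega)$ giving the same $p$ but different $\{\!\!\{\chi_i\}\!\!\}$) to close the proof.
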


The mapping class group $\text{Mod}(\Sigma_{g,n})$ acts on source and target of the map $\per_g$ equivariatly, that is,  for every $(C,m, \omega)\in\Omega^\pm\mathcal{S}_{g,\text{\sout{2}}}$ and $\varphi\in\text{Mod}(\Sigma_{g,n})$, 
$$\per_g(\varphi\cdot (C,m,\omega))=\per_g((C,m,\omega))\circ \varphi^{-1}_*.$$ The fibration induced by $\per_g$ thus induces a regular holomorphic foliation $\mathcal{F}_g$ on the quotient (moduli) space $\Omega^{\pm}\mathcal{M}_{g,2}$ called the isoperiodic foliation. Theorem \ref{t:Cgn} immediately implies 
\begin{corollary}\label{c:surjection of pi1 of leaf}
If $L\subset \Omega^{\pm}\mathcal{M}_{g,2}$ is the leaf of $\mathcal{F}_g$ associated to a period $p\in H^{1}(\Sigma_{g},\mathbb{C}/\mathbb{Z})$ of degree at least three, then $$\pi_{1}(L)\rightarrow\text{Stab}(p)\subset \text{Aut}(H_{1}(\Sigma_{g}))\text{  is surjective. }$$ 
\end{corollary}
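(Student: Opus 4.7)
The plan is to interpret the corollary as an instance of the Galois correspondence applied to the regular (orbifold) cover
\[
\pi : \Omega^\pm\mathcal{S}_{g,\text{\sout{2}}} \longrightarrow \Omega^\pm\mathcal{M}_{g,2}
\]
with deck group $\text{Sp}(2g,\mathbb{Z})$, after restriction over the leaf $L$. The fibres of $\per_g$ form a foliation on $\Omega^\pm\mathcal{S}_{g,\text{\sout{2}}}$ that is the pullback of $\mathcal{F}_g$, so for any period $p'$ the restriction $\per_g^{-1}(p') \to \pi(\per_g^{-1}(p'))$ is itself a covering map, whose deck group is the subgroup of $\text{Sp}(2g,\mathbb{Z})$ preserving $\per_g^{-1}(p')$ setwise.

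Fix a period $p$ of degree at least three and let $L$ denote the associated leaf of $\mathcal{F}_g$. Using the equivariance relation $\per_g(\varphi\cdot (C,m,\omega))=\per_g((C,m,\omega))\circ\varphi^{-1}_*$ displayed above, the preimage of $L$ decomposes as
\[
\pi^{-1}(L)\;=\;\bigsqcup_{[\gamma]\in \text{Sp}(2g,\mathbb{Z})/\text{Stab}(p)}\per_g^{-1}(\gamma\cdot p),
\]
a disjoint union indexed by the $\text{Sp}(2g,\mathbb{Z})$-orbit of $p$, since distinct periods have disjoint fibres. The action of $\text{Sp}(2g,\mathbb{Z})$ preserves the degree of a period, so each $\gamma\cdot p$ still has degree at least three, and Theorem \ref{t:Cgn} shows that every summand $\per_g^{-1}(\gamma\cdot p)$ is connected. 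These fibres are therefore precisely the connected components of $\pi^{-1}(L)$, and the same equivariance formula shows that the setwise stabilizer in $\text{Sp}(2g,\mathbb{Z})$ of the distinguished component $\per_g^{-1}(p)$ is exactly $\text{Stab}(p)$.

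Thus $\per_g^{-1}(p)\to L$ is a connected regular cover with deck group $\text{Stab}(p)$, and the classical correspondence between connected covers and subgroups of the (orbifold) fundamental group yields that the monodromy $\pi_1(L)\to\text{Sp}(2g,\mathbb{Z})$ has image exactly $\text{Stab}(p)$, proving the desired surjectivity. All the substance lives in Theorem \ref{t:Cgn}; the only point demanding care is the orbifold interpretation of $\pi_1(L)$, since $\text{Sp}(2g,\mathbb{Z})$ may act with nontrivial stabilizers on $\Omega^\pm\mathcal{S}_{g,\text{\sout{2}}}$. This is standard and, given the connectedness input, poses no real obstacle to the argument.
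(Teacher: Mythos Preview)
Your argument is correct and is precisely the standard unfolding of why the paper can say this corollary ``immediately'' follows from Theorem~\ref{t:Cgn}: the connectedness of each $\per_g^{-1}(\gamma\cdot p)$ identifies the connected components of $\pi^{-1}(L)$, and Galois theory of the $\text{Sp}(2g,\mathbb{Z})$-cover then reads off the monodromy image as the stabilizer $\text{Stab}(p)$. The paper gives no proof beyond that one word, so your write-up is the intended argument made explicit.
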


There are a few invariants that are well defined and constant along the leaves of $\mathcal{F}_g$. 

On the set of connected fibers of $\per_g$ we can apply the Transfer Principle to deduce results on the dynamics of $\mathcal{F}_g$. For this purpose, let us analyze some invariants of the mapping class group action on $H^1(\Sigma_g, \mathbb{C}/\mathbb{Z})$. 
\begin{definition}
 Given a point $(C,\omega)\in\Omega^{\pm}\mathcal{M}_{g,2}$,  denote by $\Lambda_{\omega}\subset\mathbb{C}/\mathbb{Z}$ the subgroup formed by the periods of $\omega$ on cycles of $C$.  
 \end{definition}
 The map $(C,\omega)\mapsto\Lambda_{\omega}$ is constant along a leaf $L$ of $\mathcal{F}_g$ and has image $\Lambda_L$. 
 
 Given a subset or subgroup $\Lambda\subset\mathbb{C}/\mathbb{Z}$ we denote $\Im{\Lambda}\subset\mathbb{R}$ its projection in the imaginary axis.
 \begin{definition}
 If $p:H_1(\Sigma_g,\mathbb{Z})\rightarrow \mathbb{C}/\mathbb{Z}$ is a homomorphism with discrete imaginary part $\Im (p(H_1(\Sigma_g,\mathbb{Z})))=\alpha\mathbb{Z}\subset \mathbb{R}$, the product  $\Re(\widetilde{p})\cdot \Im (p)$ in $H^1(\Sigma_g,\mathbb{R})$ defines an element  $\deg_{\alpha}(p)\in \mathbb{R}/\alpha\mathbb{Z}$ that does not depend on the choice of lift $\widetilde{p}$ of $p$ to $\mathbb{C}$. Furthermore, $\deg_{\alpha}$ is invariant under the action of the mapping class group of $\Sigma_g$ on the subset of  $H^1(\Sigma_g,\mathbb{C}/\mathbb{Z})$ with imaginary part $\alpha\mathbb{Z}$.
 \end{definition}
The map $(C,\omega)\mapsto\deg_{\alpha}(\per (C,m,\omega))$ is well defined regardless of the choice of $m$, and a constant $\deg_{\alpha}(L)\in\mathbb{R}/\alpha\mathbb{Z}$ on each leaf $L$ satisfying $\Im\Lambda_L=\alpha\mathbb{Z}$.  

The next result classifies the closure of the leaves of the isoperiodic foliation. 
\begin{theorem}
\label{t:leaf closures}
Let $L$ be a leaf of $\mathcal{F}_g$ and $\Lambda=\overline{\Lambda}_L\subset\mathbb{C}/\mathbb{Z}$ denote the topological closure of $\Lambda_L$. Then the closure of $L$ in $\Omega^{\pm}\mathcal{M}_{g,2}$ is the subset of forms $(C,\omega)$ with $\Lambda_{\omega}\subset \Lambda$, unless $\Im{\Lambda}=\alpha\mathbb{Z}\neq 0$ is discrete non-trivial, in which case the following extra conditions must hold:  $\Im\Lambda_{\omega}=\alpha\mathbb{Z}$ and 
$\deg_{\alpha}(C,\omega)=\deg_{\alpha}(L)$.

In either case $\overline{L}$ is a real analytic subset and the restriction of $\mathcal{F}_g$ to it is ergodic. 
 The leaf $L$ is closed if and only if $\Lambda$ is discrete and algebraic if and only if $\Lambda$ is finite (it corresponds to a Hurwitz space of branched covers over the sphere of degree $|\Lambda|$). 
\end{theorem}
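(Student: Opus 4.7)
The plan is to apply the Transfer Principle from \cite{CDF2} to reduce the study of leaf closures in $\Omega^{\pm}\mathcal{M}_{g,2}$ to that of $\text{Sp}(2g,\mathbb{Z})$-orbit closures in $H^1(\Sigma_g,\mathbb{C}/\mathbb{Z})$, and then to invoke Guilloux's classification of such orbit closures \cite{G}, itself an application of Ratner's theorems to the symplectic action on the abelian group $H^1(\Sigma_g,\mathbb{C}/\mathbb{Z})\simeq(\mathbb{R}/\mathbb{Z})^{2g}\times\mathbb{R}^{2g}$.

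\textbf{Step 1 (Transfer to periods).} Since $\Omega^{\pm}\mathcal{M}_{g,2}=\Omega^{\pm}\mathcal{S}_{g,\text{\sout{2}}}/\text{Sp}(2g,\mathbb{Z})$ and $\per_g$ is equivariant, the preimage of a leaf $L$ associated to a period $p$ is $\text{Sp}(2g,\mathbb{Z})$-saturated, and by Theorem \ref{t:Cgn} (connectedness of fibers of degree $\geq 3$) it equals $\per_g^{-1}(\text{Sp}(2g,\mathbb{Z})\cdot p)$. Because $\per_g$ is a holomorphic submersion (Proposition \ref{p:period map on smooth curves}), taking closures yields
\[
\overline{L}\;=\;q\bigl(\per_g^{-1}\bigl(\overline{\text{Sp}(2g,\mathbb{Z})\cdot p}\bigr)\bigr),
\]
with $q$ the quotient map. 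The theorem therefore reduces to computing the orbit closure of $p$.

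\textbf{Step 2 (Orbit closures).} The natural $\text{Sp}(2g,\mathbb{Z})$-invariants of $p$ are the closed subgroup $\Lambda=\overline{p(H_1(\Sigma_g,\mathbb{Z}))}$ and, when $\Im\Lambda=\alpha\mathbb{Z}$ is discrete non-trivial, the symplectic class $\deg_{\alpha}(p)\in\mathbb{R}/\alpha\mathbb{Z}$. Guilloux's theorem asserts completeness: $\overline{\text{Sp}(2g,\mathbb{Z})\cdot p}$ is exactly the set of homomorphisms $q$ with image contained in $\Lambda$, subject in the special case to the further constraints $\Im q(H_1(\Sigma_g,\mathbb{Z}))=\alpha\mathbb{Z}$ and $\deg_{\alpha}(q)=\deg_{\alpha}(p)$. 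Substituting this description back into Step 1 gives exactly the stated form of $\overline{L}$.

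\textbf{Step 3 (Structural consequences).} Real analyticity is immediate, since $\Lambda_\omega\subset\Lambda$, $\Im\Lambda_\omega=\alpha\mathbb{Z}$, and $\deg_{\alpha}(C,\omega)=\deg_{\alpha}(L)$ are real analytic conditions on $(C,\omega)$. Ergodicity of $\mathcal{F}_g|_{\overline{L}}$ follows from the ergodicity of $\text{Sp}(2g,\mathbb{Z})$ on the orbit closure (another consequence of Ratner/Guilloux) combined with the connectedness of the generic fibers of $\per_g$ and Fubini. The leaf $L$ is closed iff the orbit is, iff $\Lambda$ is discrete; it is algebraic iff the orbit is finite, iff $\Lambda$ is finite, in which case the remark following the definition of period identifies forms in $L$ with branched covers of $\mathbb{C}\cup\{\infty\}$ of topological degree $|\Lambda|$, so $L$ is the corresponding Hurwitz space.

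\textbf{Main obstacle.} The delicate point is the degree $2$ stratum, where by Theorem \ref{t:disconnected fibers} the fibers of $\per_g$ are disconnected for $g\geq 2$. In this regime $\Lambda$ is finite of order $2$ so the leaves are automatically algebraic, but the reduction in Step 1 must be performed separately on each connected component of the fiber, one being the Hurwitz locus of double covers of $\mathbb{C}/\mathbb{Z}$ and the other consisting of the remaining genuine degree two branched covers; the rest of the theorem is then vacuous for these leaves. A secondary technical issue is to confirm that every sub-stratum $\Lambda_\omega\subsetneq\Lambda$ allowed by the invariants is actually approached by points of $L$, which I would establish by constructing explicit isoperiodic deformations that collapse prescribed homology classes while preserving the $\deg_\alpha$ constraint.
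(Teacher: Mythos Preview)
Your overall strategy coincides with the paper's: apply the Transfer Principle from \cite{CDF2}, invoke \cite{G} (via Ratner) for the $\text{Sp}(2g,\mathbb{Z})$-orbit closures in $H^1(\Sigma_g,\mathbb{C}/\mathbb{Z})$, and deduce ergodicity (the paper attributes the latter to Moore's theorem rather than Fubini, but the substance is the same).

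There is, however, a genuine gap in Step~3. You assert ``algebraic iff the orbit is finite, iff $\Lambda$ is finite'' without justification of the forward implication. That $\Lambda$ finite forces $L$ to be a Hurwitz space is clear; the converse is precisely the novel phenomenon flagged in the introduction: when $\Lambda$ is a lattice, $L$ is closed yet \emph{transcendental}. Nothing in the Transfer Principle yields this---a closed analytic subset of a quasi-projective variety may well be algebraic. The paper supplies the missing argument by looking at the boundary. Let $\partial^*\subset\Omega\overline{\mathcal{M}}_{g,2}$ be the stratum of stable forms with a single separating node cutting off a smooth genus-$g$ piece from a rational tail carrying both poles. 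For each lift $P$ of $p$ satisfying Haupt's conditions (Proposition~\ref{p: lifts}), let $\mathcal{H}_P\subset\partial^*$ be the locus of nodal forms obtained by attaching $(\mathbb{P}^1,\frac{dz}{2\pi i z})$ to an abelian differential of period $P$; this is a Hurwitz space of covers of the elliptic curve $\mathbb{C}/\operatorname{Im}(P)$. Using isoperiodic smoothing together with Theorem~\ref{t:Cgn}, one shows $\overline{L}\cap\partial^*=\bigcup_P\mathcal{H}_P$, a \emph{countably infinite} disjoint union of positive-dimensional subvarieties. Were $L$ algebraic, its Zariski closure would meet $\partial^*$ in an algebraic set, contradicting this.

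Two minor remarks. Your ``secondary technical issue'' is not one: once \cite{G} says the orbit closure consists of all $q$ with image in $\Lambda$ (plus the $\deg_\alpha$ constraint), Theorem~\ref{t:realization} guarantees each such $q$ is hit by $\per_g$, and the Transfer Principle does the rest; no hand-built deformations are needed. As for degree two, you are right that the Transfer Principle does not literally apply there, but since $\Lambda$ is then finite the leaf is in any case a component of an algebraic Hurwitz locus and is automatically closed; the paper does not treat this case separately either.
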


\subsection{Acknowledgements}
We would like to thank 
Liza Arzhakova, Corentin Boissy, Simion Filip, Selim Ghazouani, Igor Krichever, Sergei Lando, Frank Loray, Scott Mullane, Luc Pirio and Sasha Skripchenko for sharing their viewpoints on the subject with us. 
 We also thank the France Brazil agreement in Mathematics for the support to carry this project; IMPA (Rio de Janeiro), Universidad Federal Fluminense (Niter\'oi) and IMJ (Paris Sorbonne) for the stimulating working conditions. G.C. was partially supported by CNPq, Faperj and Capes (Brasil).

\section{Strategy of the proof of Theorem  \ref{t:Cgn}}
The proof follows by induction on the genus. 

The cases of genera one and two are treated separately by analytic methods and the Torelli map. Each fiber of $\per_1$ is biholomorphic to a Zariski open set of $\mathcal{T}_{1,1}$ and is therefore connected. On the other hand, each fiber of $\per_2$ is a branched double cover, over a Zariski open subset of the Siegel space $\mathfrak{S}_{2}$. The branch points correspond precisely to odd forms with respect to the hyperelliptic involution. In the case of a fiber over a degree two homomorphism, all forms are even and we manage to prove that the cover is disconnected. In the case of degree at least three, we manage to construct an example of branch point  (see subsection 
\ref{ss:genus 2}) to deduce the connectedness of the cover.

Fix some $g\geq 3$ and assume the inductive hypothesis, i.e. Theorem \ref{t:Cgn} is true up to genus $g-1$. Then run the following program for any given homomorphism $p\in H^1(\Sigma_g,\mathbb{Z})\rightarrow \mathbb{C}/\mathbb{Z}$ of degree at least three: 

{\bf 1. Bordify $\per^{-1}(p)$} (see Section \ref{s:extension of the period map and bordification}).  The Teichm\" uller space $\mathcal{T}_{g,2}$ admits a topological bordification $\overline{\mathcal{T}}_{g,2}$ formed by marked stable curves of genus $g$ with two marked points. It is stratified by the number of nodes of the underlying curves and each stratum is a complex manifold.  The action of $\text{Mod}(\Sigma_{g,2})$ preserves the stratification and the quotient is isomorphic to the Deligne-Mumford compactification $\overline{\mathcal{M}}_{g,2}$ of $\mathcal{M}_{g,2}$. The added points, called the boundary, form a normal crossing divisor in $\overline{\mathcal{M}}_{g,2}$ at the orbifold chart level.  Quotienting $\overline{\mathcal{T}}_{g,2}$ by  $\mathcal{I}(\Sigma_{g,\text{\sout{2}}})$ produces a  stratified bordification $\overline{\mathcal{S}}_{g,\text{\sout{2}}}$ of $\mathcal{S}_{g,\text{\sout{2}}}$. The bundle of stable  meromorphic forms with two poles over the Deligne-Mumford compactification of $\mathcal{M}_{g,2}$ can be pulled back to a bundle $\Omega^{\pm}\overline{\mathcal{S}}_{g,\text{\sout{2}}}$. 

Any stable form in the boundary has a non-trivial local isoperiodic deformation space. Two conditions that guarantee that this local isoperiodic deformation space leaves the boundary are that the form has no zero components and the residue of the form at each non-separating node is zero. The bordification of $\per^{-1}(p)$ that we are interested in is its closure in the space  $\Omega^{\pm,*}_{0}\overline{\mathcal{S}}_{g,\text{\sout{2}}}$ 
of forms in $\Omega^\pm\mathcal{S}_{g,\text{\sout{2}}}$  having zero residues at all non-separating nodes and no zero components. In fact the local isoperiodic deformation space projects to a smooth complex manifold in the orbifold charts of the moduli space transverse to each boundary component of  $\Omega\overline{\mathcal{M}}_{g,2}$ passing through the point (See Theorem \ref{t:local structure special cover} and the Appendix). The stratification of the boundary (defined by the number of nodes) induces a stratification of the bordification of the isoperiodic set and for each stratum of the ambient space passing through the point there is one isoperiodic component of the stratum that lies in it.  Around a point having only separating nodes, the local picture of the stratification is that of a normal crossing divisor. The picture changes by an  abelian ramified cover over the divisor when the curve underlying the form has at least one non-separating node (see the local model of this local branched cover in \cite{CDF2}[Section 4.4]).  The abelian ramified cover does not brake a nice property of the local stratification of a normal crossing divisor: a point in the codimension $k\geq 1$ stratum lies at the intersection of the closure of $k$ codimension one (local) connected components of the divisor. Any other local connected component of a non-open stratum accumulating the point has codimension $1\leq l\leq k$ and is \textit{precisely} the set of points that belong to the closure of $l$ of the $k$ codimension one components accumulating the point, and not more.  Moreover, the open stratum is locally connected at the point. 

To any stratified space  $X$ that is a locally abelian ramified cover over a normal crossing divisor we can define its dual boundary graph $\mathcal{C}(X)$ . It has a vertex for each (global) connected component of the codimension one stratum, and a simplex between $k$ vertices for each connected component of the codimension $k$ stratum lying in the closure of the corresponding $k$ components. It is well known that the boundary complex associated to $\overline{\mathcal{T}}_{g,2}$ is isomorphic to  the \textit{curve complex}  $\mathcal{C}_{g,2}$ on the genus $g$ compact surface with two marked points $\Sigma_{g,2}$ (see \cite{FM}{Chapter 4.1}). 

The closure of $\per^{-1}(p)$ in $\Omega^{\pm,*}_{0}\overline{\mathcal{S}}_{g,\text{\sout{2}}}$ is also shown to be  stratified and a locally abelian ramified cover  over a normal crossing divisor. In particular,  the connectedness of  $\per^{-1}(p)$ is equivalent to that of its bordification. 

Moreover, the transversality condition allows to define a continuous map of complexes  \begin{equation}\label{eq:complex inclusion}\mathcal{C}(\per^{-1}(p))\rightarrow \mathcal{C}(\Omega^{\pm}\overline{\mathcal{S}}_{g,\text{\sout{2}}})\end{equation} that associates to each component of an isoperiodic stratum the component of the ambient stratification where it sits.
There is a subfamily of components of the codimension one stratum of the ambient space $\Omega^{\pm}\mathcal{S}_{g,\text{\sout{2}}}$ where we will be able to prove inductively that there is a single connected isoperiodic component of the isoperiodic stratum of codimension one associated to $p$. These are the so called $p$-simple boundary components and they correspond to forms over stable curves with one node that leaves a pole on each side, and moreover the form restricted to each part has degree at least three.  They define a subfamily of vertices of $\mathcal{C}(\per^{-1}(p))$ that span the subcomplex $\mathcal{C}'(\per^{-1}(p))$ of $p$-simple boundary points. 

{\bf 2. Isoperiodic degeneration towards boundary points }(see Section \ref{s:degenerating to a simple boundary point}). We will first prove that any point in $\per^{-1}(p)$ can be isoperiodically deformed in $\Omega_{0}^{\pm*}\overline{\mathcal{S}}_{g,\text{\sout{2}}}$ to a point belonging to a $p$-simple boundary component. In this step we use Schiffer variations, a way of deforming the singular flat metric undelying a stable meromorphic form with isolated zeros without changing the associated period homomorphism. To achieve this step we first degenerate to any boundary point, and then prove that any boundary point can be joined to a $p$-simple boundary point.

{\bf 3.  Connectedness of the boundary.}  This will be achieved by  showing  that the complex $\mathcal{C}'(\per^{-1}(p))$ is connected.  The inductive hypothesis of Theorem \ref{t:Cgn} allows to prove that the restriction of the map \eqref{eq:complex inclusion} to the subcomplex $\mathcal{C}'(\per^{-1}(p))$ of $p$-simple boundary points is injective at the level of the vertices. We will prove that it has connected image under the map \eqref{eq:complex inclusion}. The fact that there are only two poles allows to  rephrase the problem in algebraic terms: the complex $\mathcal{C}(\Omega^{\pm}\overline{\mathcal{S}}_{g,\text{\sout{2}}})$ is isomorphic to  $\mathcal{C}(\overline{\mathcal{S}}_{g,\text{\sout{2}}})$  which in its turn is isomorphic to  the quotient \begin{equation}\label{eq:quotient curve complex} \frac{\mathcal{C}_{g,2}}{\mathcal{I}(\Sigma_{g,\text{\sout{2}}})}\end{equation}  of the curve complex $\mathcal{C}_{g,2}$ under the natural action of the group $\mathcal{I}(\Sigma_{g,\text{\sout{2}}})$. Each $k$-simplex in \eqref{eq:quotient curve complex} is characterized by a $\mathcal{I}(\Sigma_{g,\text{\sout{2}}})$-orbit of a family of $k$ disjoint essential simple closed curves $c=c_{1}\sqcup\ldots\sqcup c_{k}$ in $\Sigma_{g,2}$. Whenever every $c_{i}$ separates the (two!) ordered marked points (and therefore the surface), we can characterize the simplex by the ordered splitting   $H_{1}(\Sigma_{g,2})=V_{1}\oplus\cdots\oplus V_{k+1}$ into pairwise orthogonal symplectic submodules of rank at least two induced by the parts of $\Sigma_{g,2}\setminus c$ . We suppose that the first marked point belongs to $V_{1}$ and the other to $V_{k+1}$. The order of the rest of factors is determined by imposing that $V_{j}$ has a common boundary component with $V_{j-1}$ and another with $V_{j+1}$. 
 
The complex $\mathcal{C}'(\per^{-1}(p))$ has its vertices corresponding to separating essential simple closed curves that separate the marked points, any higher dimensional simplex in it corresponds to marked stable forms having only separating nodes that separate the marked points. Therefore the image of any $k$-simplex in $\mathcal{C}'(\per^{-1}(p))$  by  \eqref{eq:complex inclusion} is characterized by a splitting of $H_{1}(\Sigma_{g,2})=V_{1}\oplus\cdots\oplus V_{k+1}$  into $k+1$ pairwise orthogonal symplectic submodules of rank at least two. Moreover for the vertices, i.e. when $k=1$, the restriction $p_{|V_{i}}$ has at least three points in the image. The condition on the size of $p(V_{i})$'s can be rephrased as follows:

 \begin{remark}\label{rem:degreeatleastthree}
 A homomorphism $p$ having image in  $\mathbb{C}/\mathbb{Z}$ has at least three points in the image if and only if its composition with  $\mathbb{C}/\mathbb{Z}\rightarrow\mathbb{C}/\frac{1}{2}\mathbb{Z}$ is non-trivial.
 Call this composition $[p]$. \end{remark}
 
 \begin{definition}\label{def:p-admissible}
Given a homomorphism $p:V\rightarrow A$ from a symplectic unimodular module to an abelian group $A$ we define:
\begin{itemize}
    \item a decomposition $V=\oplus_iV_i$ into non-trivial symplectic unimodular submodules is said to be $p$-admissible if $p_{|V_i}\neq 0$ for all $i$. 
\item The graph of $p$-admissible decompositions has a vertex for every $p$-admissible decomposition with two factors, and an edge between the vertices corresponding to  $V_1\oplus V_2$ and $V_1'\oplus V_2'$  if there exists a $p$-admissible decomposition with more factors having one of the $V_i$'s as factor and also one of the $V_i'$ 's.
\end{itemize}
\end{definition}

A word of warning about this definition: we do not regard the order of the factors. 

\begin{proposition}\label{p:algebrization}
Let \(g\geq 3\) and \(p\in H^1(\Sigma_g,\C/\Z)\). If the graph of $[p]$-admissible decompositions is connected and Theorem \ref{t:Cgn} is true up to genus $g-1$ then the complex \(\mathcal{C}'(\per^{-1}(p))\) is connected.\end{proposition}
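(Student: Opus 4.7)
The plan is to compare the complex $\mathcal{C}'(\per^{-1}(p))$ with the graph $G$ of $[p]$-admissible decompositions by constructing a map $\Phi$ from the vertex set of $\mathcal{C}'(\per^{-1}(p))$ to that of $G$, proving that $\Phi$ is a bijection on vertices under which every edge of $G$ lifts to a $1$-simplex of $\mathcal{C}'(\per^{-1}(p))$ joining the corresponding preimages. Since $G$ is connected by hypothesis, this will imply that $\mathcal{C}'(\per^{-1}(p))$ is connected.

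To define $\Phi$, recall that a vertex $v$ of $\mathcal{C}'(\per^{-1}(p))$ is a connected component of the codimension-one stratum of $p$-simple boundary points of $\per^{-1}(p)$, represented up to the action of $\mathcal{I}(\Sigma_{g,\text{\sout{2}}})$ by a single separating essential simple closed curve $c\subset\Sigma_{g,2}$ separating the marked points, and inducing a symplectic splitting $H_1(\Sigma_g,\mathbb{Z})=V_1\oplus V_2$ with $p_-\in V_1$, $p_+\in V_2$. The $p$-simplicity condition forces $p_{|V_i}$ to have degree at least three, that is $[p]_{|V_i}\neq 0$ by Remark \ref{rem:degreeatleastthree}, so $V_1\oplus V_2$ is $[p]$-admissible and defines a vertex $\Phi(v)$ of $G$. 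Surjectivity of $\Phi$ is immediate: given an arbitrary $[p]$-admissible $V_1\oplus V_2$, each subsurface has genus $g_i\geq 1$ (since $\operatorname{rank}V_i\geq 2$) and $p_{|V_i}$ is non-trivial, so Theorem \ref{t:realization} produces a meromorphic form realizing $p_{|V_i}$ with two simple poles of residues $\pm 1$ on each side, and gluing at the node gives a $p$-simple boundary point mapping to $V_1\oplus V_2$. For injectivity, the local structure of the bordification near a $p$-simple boundary point (Section \ref{s:extension of the period map and bordification}, where separating nodes contribute no ramification) identifies, in an isoperiodic chart, the set of $p$-simple boundary points with fixed decomposition $V_1\oplus V_2$ with the product
\[
\per^{-1}_{g_1}(p_{|V_1})\times \per^{-1}_{g_2}(p_{|V_2}).
\]
Since $[p]$-admissibility ensures each $p_{|V_i}$ has degree at least three and $g_i\leq g-1$, the inductive hypothesis guarantees that each factor is connected; the product is therefore connected, so $\Phi^{-1}(V_1\oplus V_2)$ reduces to a single vertex.

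For edge lifting, suppose $D=V_1\oplus V_2$ and $D'=V_1'\oplus V_2'$ are adjacent in $G$ via a common $[p]$-admissible refinement $D''$ having, say, $V_1$ and $V_2'$ among its factors. The multicurve realizing $D''$ in $\Sigma_{g,2}$ contains a curve $c$ bounding $V_1$ and a curve $c'$ bounding $V_2'$; they are disjoint, and the coarser $3$-factor decomposition $V_1\oplus W\oplus V_2'$ induced by $\{c,c'\}$, where $W$ is the direct sum of the remaining factors of $D''$, remains $[p]$-admissible since a direct sum of factors on which $[p]$ is non-zero carries a non-zero restriction of $[p]$. A further application of Theorem \ref{t:realization} on each of the three subsurfaces then produces a $p$-isoperiodic stable form with exactly the two nodes $c,c'$, so $\{c,c'\}$ is a $1$-simplex of $\mathcal{C}'(\per^{-1}(p))$ whose two vertices are $\Phi^{-1}(D)$ and $\Phi^{-1}(D')$. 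I expect the main difficulty of the proof to lie in the injectivity of $\Phi$: it requires both the product description of the $p$-simple codimension-one strata in isoperiodic charts, which must be extracted carefully from the bordification analysis of Step 1 (and the Appendix), and the full strength of the inductive hypothesis, which applies precisely because $[p]$-admissibility transports the degree-at-least-three condition to each factor.
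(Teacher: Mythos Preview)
Your overall strategy coincides with the paper's, and the surjectivity and edge-lifting parts are fine. The gap is in the injectivity of $\Phi$. A vertex of $\mathcal{C}'(\per^{-1}(p))$ is characterized by an \emph{ordered} splitting $(V_1,V_2)$, where the order records which side carries $p_-$ and which carries $p_+$; by contrast, the vertices of the graph $G$ of $[p]$-admissible decompositions are explicitly \emph{unordered} (the paper warns about this just after Definition~\ref{def:p-admissible}). Your product description
\[
\per^{-1}_{g_1}(p_{|V_1})\times \per^{-1}_{g_2}(p_{|V_2})
\]
and the inductive hypothesis only show that the set of $p$-simple boundary points with a fixed \emph{ordered} splitting is connected. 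They do not show that the component corresponding to $(V_1,V_2)$ and the one corresponding to $(V_2,V_1)$ (poles on opposite sides) lie in the same connected component of the boundary of $\per^{-1}(p)$. So $\Phi$ is not known to be injective from your argument alone, and without this the lifted edges in $G$ only connect \emph{some} preimage over each $G$-vertex, which is not enough to conclude connectedness of $\mathcal{C}'(\per^{-1}(p))$.

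The paper fills exactly this gap in Proposition~\ref{p:equivalence with same factors}: it first uses the product argument you gave for equal ordered factors, and then shows separately that permuting the order of the parts yields equivalent simple boundary points. The latter is not formal: one first degenerates each part to genus-one pieces (via Proposition~\ref{p:degenerating to simple boundary point}), and then swaps two consecutive genus-one factors by invoking the connectedness of $\per_2^{-1}$ (Corollary~\ref{c: connectedness genus two}), while a third factor (available since $g\geq 3$) keeps the whole path inside the boundary. Both the hypothesis $g\geq 3$ and the inductive hypothesis up to genus $g-1$ are used in this step. Once you incorporate this reordering argument, your proof is complete and essentially identical to the paper's.
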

The proof of this proposition is contained in Section \ref{s:sufficient conditions}. 
The following theorem (proven in Section \ref{s:algebraic results} ) allows to use the previous proposition to deduce the connectedness of $\per^{-1}(p)$. 

\begin{theorem}\label{t:graph of admissible}
 Given a non-trivial homomorphism $p:V\rightarrow A$ from a symplectic unimodular $\mathbb{Z}$-module $V$ of rank at least six to an abelian group $A$, the graph of  $p$-admissible decompositions is nonempty and connected.
\end{theorem}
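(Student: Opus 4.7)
The plan is to isolate one workhorse \emph{decomposability lemma} and deduce both nonemptyness and connectedness from it by combinatorial manipulations. The lemma reads: if $W$ is a symplectic unimodular $\mathbb{Z}$-module of rank at least four on which $p$ restricts non-trivially, then $W$ admits a decomposition $W=W'\oplus W''$ into non-trivial symplectic unimodular summands with $p|_{W'}\neq 0 \neq p|_{W''}$. I would prove the lemma by picking a primitive $v\in W$ with $p(v)\neq 0$, extending it to a symplectic basis $e_1=v,f_1,e_2,f_2,\dots$ of $W$, and distinguishing two cases: if $p$ is already non-trivial on $\langle e_2,f_2,\dots\rangle$ the naive splitting $W=\langle e_1,f_1\rangle \oplus \langle e_1,f_1\rangle^\perp$ does the job; if $p$ is concentrated on $\langle e_1,f_1\rangle$, a twisted rank-2 summand such as $\langle e_1+e_2,f_1\rangle$ (when $p(f_1)\neq 0$) or $\langle e_1,f_1+f_2\rangle$ (when $p(e_1)\neq 0$) does it, the orthogonal complement then being easily checked to carry a non-zero value of $p$. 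Applied to $W=V$ the lemma directly produces a $p$-admissible 2-factor decomposition, establishing the nonemptyness half of the theorem.

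\textbf{Reduction to rank-2 factors.} The main consequence of the lemma is that in any $p$-admissible 2-factor decomposition $V=V_1\oplus V_2$ of the rank-$\geq 6$ module $V$, the factor of rank at least four (at least one exists) is $p$-decomposable, so $V$ admits a $p$-admissible 3-factor refinement $V=V_1\oplus A\oplus B$; by definition of the edges in the graph, the three 2-factor coarsenings $V_1\oplus(A\oplus B)$, $A\oplus(V_1\oplus B)$, $B\oplus(V_1\oplus A)$ of such a 3-factor decomposition are mutually joined by edges. Iterating the refinement until all factors have rank 2 yields a maximal $p$-admissible decomposition $V=U_1\oplus\cdots\oplus U_g$ whose 2-factor coarsenings $U_i\oplus U_i^\perp$ all lie in the same connected component of the graph as the original $V_1\oplus V_2$. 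Hence the connectedness of the whole graph reduces to showing that any two rank-2 symplectic summands $U,U'$ of $V$ with $p$ non-trivial on $U$, $U'$, $U^\perp$, $U'^\perp$ give rise to decompositions $U\oplus U^\perp$ and $U'\oplus U'^\perp$ in the same connected component.

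\textbf{Swaps and the main obstacle.} I call $U,U'$ \emph{swappable} if they are symplectically orthogonal, disjoint, and $p|_{(U+U')^\perp}\neq 0$, because then $V=U\oplus U'\oplus (U+U')^\perp$ is a $p$-admissible 3-factor decomposition realizing a direct edge between $U\oplus U^\perp$ and $U'\oplus U'^\perp$. In the generic configuration this is automatic since $(U+U')^\perp$ has rank at least $2g-4\geq 2$. The main obstacle I anticipate is handling two degenerate situations: (a) $U$ and $U'$ are not orthogonal or not disjoint, so no direct swap is available; (b) $U$ and $U'$ are orthogonal but $p$ vanishes on $(U+U')^\perp$, which can only happen if $p$ is concentrated on $U+U'$. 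In each case I would interpolate through an auxiliary rank-2 symplectic summand $U''$: for (a), take $U''\subset U^\perp$ disjoint from $U'$; for (b), take $U''\subset U^\perp$ whose image under $p$ uses directions outside $U+U'$, which is available precisely because $U^\perp$ has rank $\geq 4$. Such a $U''$ exists by the decomposability lemma applied inside $U^\perp$ together with a mild genericity argument, and a chain of at most two swaps $U\to U''\to U'$ then connects the two endpoints in the graph. The inability to find such an intermediate $U''$ in rank four, where $U^\perp$ would have rank only two, is exactly the reason why the theorem's hypothesis demands rank at least six.
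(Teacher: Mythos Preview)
Your decomposability lemma and the reduction to rank-two factors are correct and match the paper's Lemma~\ref{l: admissible element}/Corollary~\ref{c:padmissible decompositions exist} and Corollary~\ref{c: reduction to rank two} almost verbatim. The divergence from the paper is in how you connect two rank-two \(p\)-admissible summands \(U,U'\): the paper first \emph{forces intersection} (Lemma~\ref{l: creating intersection}), replacing \(U,U'\) by equivalent \(W_1,W_1'\) with \(W_1\cap W_1'\neq 0\), and then handles the intersecting case by a somewhat lengthy case analysis (Lemma~\ref{l: proof of equivalence I}). You try instead to pass through \emph{orthogonal} (``swappable'') intermediates. This is a genuinely different route, and in principle it could be made to work, but as written it has a real gap.

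The gap is in your case~(b). Suppose \(U,U'\) are orthogonal rank-two summands with \(p|_{(U+U')^\perp}=0\). For your intermediate \(U''\subset U^\perp\) to be swappable with \(U'\) you need \(U''\perp U'\), hence \(U''\subset U^\perp\cap (U')^\perp=(U+U')^\perp\); but \(p\) vanishes there, so \(p|_{U''}=0\) and \(U''\oplus(U'')^\perp\) is not \(p\)-admissible. Thus \emph{no} single \(U''\) is simultaneously swappable with both \(U\) and \(U'\), and your ``chain of at most two swaps'' cannot exist in this case. (Your phrase ``image under \(p\) uses directions outside \(U+U'\)'' is also unclear, since the image of \(p\) lies in \(A\), not in \(V\).) A similar issue lurks in case~(a): requiring \(U''\subset U^\perp\) to be merely \emph{disjoint} from \(U'\) does not make it \emph{orthogonal} to \(U'\), which is what swappability demands. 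The obstruction in~(b) can be overcome, but it needs longer chains and a basis-by-basis case analysis of where \(p\) is supported --- essentially work of the same nature and length as the paper's Lemma~\ref{l: proof of equivalence I}. The paper's choice to route through \emph{intersecting} rather than orthogonal pairs is not accidental: when \(W_1\cap W_1'=\Z a_1\), the span \(W_1+W_1'\) has rank only three, so its orthogonal has corank three rather than four, which gives just enough extra room to push the case analysis through.
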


A posteriori it would have been enough to bordify only by adding  the $p$-simple boundary points to prove the connectedness of the bordification. Unfortunately we have not found a proof of the degeneration part  that avoids passing through some other boundary components in $\Omega_{0}^{\pm*}\overline{\mathcal{S}}_{g,\text{\sout{2}}}$ as explained at the end of step 2. 

\section{Some preliminary tools}

\subsection{Criterion for \(p\)-admissible elements}
\begin{definition}\label{def:p-admissible element}
Let $p:V\rightarrow A$ be a non-trivial homomorphism from a symplectic unimodular $\mathbb{Z}$-module $V$ to an abelian group $A$. An element $v\in V$ is $p$-admissible if it belongs to a factor of a $p$-admissible decomposition.
\end{definition}

\begin{lemma} \label{l: admissible element}
Suppose $V$ has rank at least four and \(p:V\rightarrow A\) is non-trivial. A non zero element in \(V\) is  \(p\)-admissible  if and only if \(p\) does not vanish identically on the orthogonal of \(v\). In this case, it belongs to a factor of rank two of a \(p\)-admissible decomposition. In particular, the set \(NA(p)\subset V\) of non \(p\)-admissible elements is a submodule of rank at most one. \end{lemma}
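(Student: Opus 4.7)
The plan begins with the ``only if'' direction, which is immediate: if $v$ belongs to a factor $V_1$ of a $p$-admissible decomposition $V=V_1\oplus V_2$, the symplectic orthogonality forces $V_2\subseteq v^\perp$, and $p|_{V_2}\neq 0$ then gives $p|_{v^\perp}\neq 0$. For the converse, I assume $p|_{v^\perp}\neq 0$. Since $v^\perp$ depends only on the primitive vector $v_0$ with $v\in \mathbb{Z}v_0$, and any rank-two symplectic summand containing $v_0$ also contains $v$, I may reduce to the case that $v$ is primitive. By unimodularity of $\omega$, pick $u_0\in V$ with $\omega(v,u_0)=1$ and set $W:=\langle v,u_0\rangle^\perp$, a symplectic unimodular submodule of rank $2g-2\geq 2$. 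The rank-two symplectic summands of $V$ containing $v$ are exactly those of the form $V_1(w):=\langle v, u_0+w\rangle$ for $w\in W$, and a direct calculation yields
\[
V_2(w):=V_1(w)^\perp=\{\omega(w,w')\,v+w':w'\in W\},
\]
so $p|_{V_2(w)}$ reduces to the homomorphism $w'\mapsto \omega(w,w')\,p(v)+p(w')$ on $W$.

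I will next select $w\in W$ so that both $p|_{V_1(w)}$ and $p|_{V_2(w)}$ are non-trivial, splitting on whether $p|_W$ vanishes. If $p|_W\neq 0$, take $w=0$, so $V_2(0)=W$ and $p|_{V_2(0)}\neq 0$; if moreover $p(v)=0$, I replace $u_0$ by $u_0+w$ for some $w\in W$ with $p(w)\neq -p(u_0)$ (possible by non-triviality of $p|_W$), which ensures $p|_{V_1(w)}\neq 0$. If instead $p|_W=0$, the condition $p|_{v^\perp}\neq 0$ combined with the decomposition $v^\perp=\mathbb{Z}v\oplus W$ forces $p(v)\neq 0$, and the expression for $p|_{V_2(w)}$ reduces to $w'\mapsto \omega(w,w')\,p(v)$; choosing $w\in W$ primitive, the linear form $\omega(w,\cdot)\colon W\to \mathbb{Z}$ is surjective by unimodularity of $\omega|_W$, so some $w'$ yields the value $p(v)\neq 0$, while $v\in V_1(w)$ trivially guarantees $p|_{V_1(w)}\neq 0$. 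This last case is the crux of the argument and is where the hypothesis $\operatorname{rank} V\geq 4$ enters, ensuring that $W$ has rank at least two and contains a primitive element with surjective $\omega$-pairing; without this freedom the construction would break down.

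Finally, the rank bound on $NA(p)$ will follow from the characterization just obtained: any non-zero $v\in NA(p)$ with primitive part $v_0$ satisfies $p|_{v_0^\perp}=0$, so $p$ factors through the quotient $V/v_0^\perp\cong \mathbb{Z}$ via $\omega(v_0,\cdot)$, giving $p=c\cdot \omega(v_0,\cdot)$ for some $c\in A$, non-zero by non-triviality of $p$. Since $p$ is fixed and the identification $\operatorname{Hom}(V,A)\cong V\otimes A$ afforded by unimodularity of $\omega$ constrains which primitive $v_0$ can realize such a factorization, the primitive direction $\mathbb{Z}v_0$ is determined by $p$; consequently every non-zero element of $NA(p)$ lies in $\mathbb{Z}v_0$, so $NA(p)$ is a submodule of rank at most one.
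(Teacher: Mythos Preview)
Your proof of the equivalence and of the rank-two refinement is correct. The paper argues by fixing a symplectic basis $a_1,b_1,\ldots,a_g,b_g$ with $v=a_1$ and splits into cases according to whether $p(a_1)\neq 0$, modifying the basis by hand in each sub-case to exhibit a suitable rank-two summand. You instead parametrize \emph{all} rank-two unimodular symplectic summands containing $v$ by $W=\langle v,u_0\rangle^\perp$, compute their orthogonals uniformly as $V_2(w)=\{\omega(w,w')v+w':w'\in W\}$, and split on whether $p|_W=0$. This is a cleaner organization of the same idea and makes the role of the hypothesis $\operatorname{rank}(V)\geq 4$ (ensuring $W$ is a nontrivial unimodular symplectic module) more transparent.

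The final paragraph, however, does not go through. From $p=c\cdot\omega(v_0,\cdot)$ with $c\neq 0$ you assert that the identification $\operatorname{Hom}(V,A)\cong V\otimes A$ forces the line $\mathbb{Z}v_0$ to be determined by $p$, but this fails whenever $c$ has finite order in $A$: if $nc=0$ then $v_0\otimes c=(v_0+nw)\otimes c$ for every $w\in V$, so $p=c\cdot\omega(v_0+nw,\cdot)$ as well, and $v_0+nw$ can be chosen primitive and not rationally collinear with $v_0$. Concretely, take $V$ of rank four with symplectic basis $a_1,b_1,a_2,b_2$, let $A=\mathbb{Z}/3\mathbb{Z}$, and set $p(x)=\omega(a_1,x)\bmod 3$; then both $a_1$ and $a_1+3a_2$ are primitive, non-collinear, and one checks directly that $p$ vanishes on each of their symplectic orthogonals, so both lie in $NA(p)$. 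The paper's own argument for this step---that two distinct primitive corank-one submodules must generate all of $V$---has the same defect over $\mathbb{Z}$: in this example $a_1^\perp+(a_1+3a_2)^\perp=\langle a_1,a_2,b_2,3b_1\rangle$ has index $3$ in $V$, so $p$ can vanish on both orthogonals without being identically zero. Thus neither argument establishes the ``rank at most one'' conclusion for arbitrary abelian $A$; both do work when $A$ is torsion-free, since then $p$ vanishing on a finite-index subgroup forces $p=0$.
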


\begin{proof}  Let \(v\) be a non zero element of \(V\). Assume \(v\) is \(p\)-admissible, namely \(v\in W_1\) where \( V= W_1\oplus W_2\) is a \(p\)-admissible decomposition. Then, the restriction of \(p\) to \(W_2\) does not vanish and \(W_2\subset v^\perp\) so \(p\) does not vanish on the orthogonal of \(v\). 

Reciprocally, assume that \(p\) is not identically zero on \(v^\perp\). We will assume that \(v\) is primitive, and consider a symplectic basis \(a_1, b_1, \ldots, \) such that \( v=a_1\). 

\vspace{0.2cm} 

\textit{1st case: \(p(a_1)\neq 0\).} In this case, we are done if \(p\) is not identically zero on \( (\Z a_1 \oplus\Z b_1)^\perp\) since in this case we can take \(W_1=\Z a_1 \oplus\Z b_1\) and \( W_2 = W_1^\perp\).  If \(p\) is identically zero on \((\Z a_1 \oplus \Z b_1)^\perp= \Z a_2\oplus \Z b_2\oplus \ldots\), we define  \( W_1= \Z  a_1 \oplus \Z ( b_1-b_2) \) and \(W_2=W_1^\perp\). The symplectic decomposition \(V=W_1\oplus W_2\) is \(p\)-admissible in this case since \(p(a_1)\neq0\), \(p(a_2+a_1)\neq 0\), and \( a_1\in W_1\), \(a_2+a_1\in W_1^\perp\). We are done since \(v=a_1\in W_1\).

\vspace{0.2cm} 

\textit{2d case: \(p(a_1)=0\).} In this case, \(p\) does not vanish on \(\Z a_2\oplus \Z b_2+\ldots\) since otherwise it would vanish on \(a_1^\perp=v^\perp\), and up to changing the basis \(a_2,b_2, \ldots, \) we can assume that \(p(a_2)\neq 0\).  If \(p(b_1)\neq 0\) we are done by setting \( W_1 = \Z a_1 +\Z b_1\) and \( W_2 = W_1^\perp \). If \(p(b_1)=0\), we consider \( W_1 = \Z a_1 \oplus  \Z (b_1+a_2)\) and \( W_2= W_1^\perp \) and we are done since \( b_1 +a_2 \in W_1 \setminus \text{ker} (p)\) and \( a_2 \in W_2 \setminus \text{ker} (p)\). So we conclude that \(v\) belongs to a \(p\)-admissible submodule of rank two. 

To end the proof of the Lemma, notice that if two non zero elements \(v, v'\) are not \(p\)-admissible, then both \( v^\perp\) and \((v')^\perp \) are contained in \(\text{ker} (p)\). Being corank one primitive submodules, they need to be equal since otherwise they would generate the whole \(V\), and so \( p\) would vanish identically. In particular, this proves that \( v\) and \(v'\) are rationally colinear, and the Lemma follows. \end{proof}
\begin{corollary}\label{c:padmissible decompositions exist}
Let $V$ be a symplectic unimodular submodule of rank at least four. Then, for any non-trivial homomorphism $p:V\rightarrow A$ there exists a $p$-admissible decomposition. 
\end{corollary}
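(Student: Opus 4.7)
The plan is to derive the corollary directly from Lemma~\ref{l: admissible element}, so the main work is just to produce a single $p$-admissible element and then invoke the definition of $p$-admissibility.

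First, I would use the fact that $p$ is non-trivial to pick an element $u \in V$ with $p(u) \neq 0$. Because the symplectic form on $V$ is unimodular and non-degenerate, the orthogonal complement $u^\perp$ has corank exactly one, hence rank at least three (since the rank of $V$ is at least four). Choose any non-zero $v \in u^\perp$; then $u \in v^\perp$ and $p(u) \neq 0$, so $p$ does not vanish identically on $v^\perp$. Lemma~\ref{l: admissible element} then asserts that $v$ is $p$-admissible, and in fact belongs to a rank-two factor $W_1$ of a $p$-admissible decomposition $V = W_1 \oplus W_2$. By Definition~\ref{def:p-admissible element} and the definition of $p$-admissible decomposition, this $W_1 \oplus W_2$ is exactly the object we want.

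A second, equivalent way to phrase the argument (which I would probably mention as a remark) is to use the last statement of the lemma: the set $NA(p)$ of non-$p$-admissible elements forms a submodule of rank at most one, so since $\mathrm{rank}(V) \geq 4$ the complement $V \setminus NA(p)$ is non-empty; any element in it, by the very definition of $p$-admissibility, sits in a factor of some $p$-admissible decomposition.

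There is no real obstacle here: the entire substance of the statement is already contained in the classification given by Lemma~\ref{l: admissible element}. The only thing to check is that the rank hypothesis $\mathrm{rank}(V) \geq 4$ is used to guarantee that the orthogonal of a single vector is large enough (equivalently, that $NA(p)$ cannot exhaust $V$), which is immediate.
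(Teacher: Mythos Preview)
Your proposal is correct and matches the paper's approach: the corollary is stated there without proof, as an immediate consequence of Lemma~\ref{l: admissible element}, and your derivation (either via producing a $p$-admissible element explicitly or via the rank bound on $NA(p)$) is exactly how one reads it off from that lemma.
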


\subsection{Preliminaries on $\per_g$} \label{ss: Preliminaries}
We begin by proving

\begin{proposition}\label{p:period map on smooth curves}
The period map $\per_{g}$ on \(\Omega^\pm\mathcal{S}_{g,\text{\sout{2}}}\) is a holomorphic submersion. 
\end{proposition}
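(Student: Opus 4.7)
The plan is to verify holomorphicity and surjectivity of the differential separately. For holomorphicity, work locally on $\Omega^\pm\mathcal{S}_{g,\text{\sout{2}}}$ and lift $\per_g$ through the covering $\mathbb{C}^{2g}\rightarrow H^1(\Sigma_g,\mathbb{C}/\mathbb{Z})$ by choosing a basis $\gamma_1,\ldots,\gamma_{2g}$ of $H_1(\Sigma_g,\mathbb{Z})$ represented by smooth loops missing the two punctures $p_\pm$. The lifted coordinates are the period integrals $(C,m,\omega)\mapsto\int_{m(\gamma_i)}\omega$, which depend holomorphically on $(C,m,\omega)$ by the standard variation-of-period argument (smooth local trivialization of the family combined with differentiation under the integral).

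For the submersion property, fix $(C,m,\omega)$ and keep the local lift $\widetilde{\per}_g$ valued in $H^1(\Sigma_g,\mathbb{C})\simeq H^1(C,\mathbb{C})$. The tangent space to $\Omega^\pm\mathcal{S}_{g,\text{\sout{2}}}$ splits canonically as
\[
H^1(C,T_C(-x_--x_+))\;\oplus\; H^0(C,\Omega_C^1),
\]
the first factor parametrizing Kodaira--Spencer deformations of the pointed curve (the homological marking and the $\mathcal{I}(\Sigma_{g,\text{\sout{2}}})$-quotient contribute no new tangent directions) and the second being the affine-fiber direction in $\Omega^\pm C$. In the fiber direction, $d\widetilde{\per}_g$ sends $\alpha\in H^0(C,\Omega_C^1)$ to its de Rham class $[\alpha]\in F^1H^1(C,\mathbb{C})=H^{1,0}(C)$ and is thus surjective onto $F^1$. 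In the base direction, a class $\mu\in H^1(C,T_C(-x_--x_+))$ moves $[\omega]$ via the Gauss--Manin connection by $\nabla_\mu[\omega]$, whose image in the quotient $H^1(C,\mathbb{C})/F^1\simeq H^{0,1}(C)=H^1(C,\mathcal{O}_C)$ is the Griffiths cup product $\mu\cup[\omega]$.

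The submersion property therefore reduces to surjectivity of
\[
H^1(C,T_C(-x_--x_+))\longrightarrow H^1(C,\mathcal{O}_C),\qquad \mu\longmapsto\mu\cup[\omega],
\]
which by Serre duality is equivalent to injectivity of the transpose
\[
H^0(C,\Omega_C^1)\longrightarrow H^0(C,\Omega_C^{\otimes 2}(x_-+x_+)),\qquad \alpha\longmapsto\alpha\,\omega.
\]
Since $C$ is a smooth irreducible curve and $\omega\not\equiv 0$, multiplication by $\omega$ has no kernel, so the map above is surjective and $\per_g$ is a submersion. The main subtlety is the identification of the base-direction derivative with the Gauss--Manin cup-product formula for meromorphic forms with prescribed residues; once this is in place, the Serre duality step is immediate.
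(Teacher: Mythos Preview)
Your argument is correct, but it follows a substantially different route from the paper's.  The paper exploits the fact that $\Omega^{\pm}\mathcal{S}_{g,\text{\sout{2}}}\rightarrow\mathcal{S}_{g,\text{\sout{2}}}$ is an affine bundle directed by the Hodge bundle of \emph{holomorphic} one-forms: after choosing a local affine origin appropriately, $\per_g$ differs from (a germ of) the holomorphic period map $\Omega\mathcal{S}_g\rightarrow H^1(\Sigma_g,\mathbb{C})$ only by a translation, and the latter is already known to be a holomorphic submersion at every nonzero form by \cite{CDF2}.  Thus the paper reduces the entire statement to a citation.  You instead compute the differential directly: the vertical (affine-fiber) part hits $H^{1,0}$, and the horizontal part surjects onto $H^{0,1}$ via the Kodaira--Spencer/Griffiths cup product $\mu\mapsto\mu\cup\omega\in H^1(C,\mathcal{O}_C)$, whose surjectivity you verify by Serre-dualizing to the injectivity of multiplication by $\omega$ on $H^0(C,\Omega_C^1)$.

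Two small remarks on your write-up.  First, the tangent space splitting you invoke is not canonical (an affine bundle admits no natural horizontal lift); what is canonical is the short exact sequence, and your argument only uses that the vertical part hits $H^{1,0}$ and the induced map to the quotient $H^1/H^{1,0}$ is surjective, so no harm done.  Second, as you yourself flag, the identification of the Gauss--Manin derivative modulo $F^1$ with the cup product $\mu\cup\omega$ requires $\omega\in H^0(C,\Omega_C^1(x_-+x_+))$ to be treated as a section of a twisted sheaf rather than as a de Rham class on $C$; the pairing $H^1(C,T_C(-x_--x_+))\times H^0(C,\Omega_C^1(x_-+x_+))\rightarrow H^1(C,\mathcal{O}_C)$ is the correct receptacle, and the infinitesimal-period formula in this logarithmic setting is standard but does need to be invoked.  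The paper's affine-bundle reduction sidesteps this point entirely, which is its main advantage; your approach, on the other hand, is self-contained and makes the geometric source of surjectivity (nonvanishing of $\omega$) completely explicit.
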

\begin{proof}
The statement can be checked locally. By the description of $\Omega^\pm\mathcal{S}_{g,\text{\sout{2}}}$ as affine bundle over the Hodge bundle, the map $\per_g$ is, up to an adequate choice of local affine coordinate, defined by a germ of the map $$\Omega\mathcal{S}_g\rightarrow \text{Hom}(H_1(\Sigma_g,\mathbb{Z});\mathbb{C})\text{ given by }(C,m,\omega)\mapsto\{\gamma\mapsto\int_{m(\gamma)}\omega\}$$ where $\Omega\mathcal{S}_g\rightarrow \mathcal{S}_g$ denotes the Hodge bundle of \textit{holomorphic} one forms over the Torelli cover $\mathcal{S}_g\rightarrow \mathcal{M}_g$. This map is holomorphic everywhere and submersive around any non-zero form (see e.g. \cite{CDF2}). By appropriately choosing the base point of the affine model, we get the desired result. 
\end{proof}


\begin{proposition}\label{p: lifts}
Given \( p \in H^1 (\Sigma_g, \C/\Z)\setminus H^1 (\Sigma_g, \R /\Z)\), the set of lifts \(P\in H^1 (\Sigma_g, \C) \) that are the periods of a holomorphic differential on a smooth homologically marked curve, and whose reductions modulo \(\Z\) is \( p\), is infinite countable. If \(p\in H^1(\Sigma_g, \R/\Z)\setminus 0\) the set is empty and if $p=0$ the only possibility is $P=0$. 
\end{proposition}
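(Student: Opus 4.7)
The plan is to reduce the statement to the realization theorem for periods of holomorphic differentials on smooth curves used in \cite{CDF2}. Via the isomorphism \(\C/\Z\cong (\R/\Z)\oplus i\R\), any \(p\in H^1(\Sigma_g,\C/\Z)\) decomposes as \((\Re(p),\Im(p))\) with \(\Im(p)\in H^1(\Sigma_g,\R)\) canonically defined. Any lift \(P=X+iY\in H^1(\Sigma_g,\C)\) of \(p\) must therefore satisfy \(Y=\Im(p)\), while \(X\) ranges over a coset \(X_0+H^1(\Sigma_g,\Z)\) for any \(X_0\in H^1(\Sigma_g,\R)\) projecting to \(\Re(p)\). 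In particular, the set of \textit{all} lifts is an \(H^1(\Sigma_g,\Z)\cong\Z^{2g}\)-torsor, hence infinite countable; what is left is to pick out which lifts are periods of holomorphic forms.

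I would then invoke the Haupt--Kapovich realization criterion (in the form used in \cite{CDF2}): a class \(P\in H^1(\Sigma_g,\C)\) is realized as the period homomorphism of a holomorphic \(1\)-form on some smooth homologically marked curve of genus \(g\) if and only if either \(P=0\) or \(i\langle P\cup\bar P,[\Sigma_g]\rangle>0\). Writing \(P=X+iY\) with \(X,Y\in H^1(\Sigma_g,\R)\) and using that \(X\cup X=Y\cup Y=0\) by graded commutativity of the cup product on \(H^1(\Sigma_g,\R)\), the positivity condition simplifies to \(\langle X\cup Y,[\Sigma_g]\rangle>0\).

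The three cases of the proposition then follow by elementary bookkeeping on the coset \(X_0+H^1(\Sigma_g,\Z)\). If \(p=0\) every lift is a real integral class so the positivity fails, forcing \(P=0\). If \(p\in H^1(\Sigma_g,\R/\Z)\setminus\{0\}\), then \(Y=0\) while \(P\neq 0\), so no lift can be realized. If on the other hand \(p\notin H^1(\Sigma_g,\R/\Z)\) then \(Y\neq 0\); nondegeneracy of the intersection form on \(H^1(\Sigma_g,\Z)\) produces some \(Z_0\in H^1(\Sigma_g,\Z)\) with \(\langle Z_0\cup Y,[\Sigma_g]\rangle>0\), and for all sufficiently large \(n\) the lift \((X_0+nZ_0)+iY\) satisfies the positivity condition, yielding infinitely many realized lifts.

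The only non-trivial input is the realization criterion itself, which I treat as a black box from \cite{CDF2}. The technically delicate case of Haupt's theorem is when the image \(P(H_1(\Sigma_g,\Z))\) is a discrete lattice in \(\C\): then the degree of the induced ramified cover \(C\to \C/P(H_1)\) must be large enough to satisfy a Riemann--Hurwitz constraint for the target genus \(g\). Since this degree is proportional to \(\langle X\cup Y,[\Sigma_g]\rangle\), which grows linearly in \(n\) along the one-parameter family \(X_0+nZ_0\), this lower bound is automatic for \(n\) large, so the infinitude of realized lifts is preserved. This is the step where one must be careful to appeal to the full strength of \cite{CDF2} rather than only the positivity criterion.
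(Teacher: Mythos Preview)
Your overall strategy matches the paper's: both reduce to Haupt's criterion and both parametrize the lifts as $P=(X_0+w)+iY$ with $Y=\Im(p)$ fixed and $w$ ranging over $H^1(\Sigma_g,\Z)$. The cases $p=0$ and $p\in H^1(\Sigma_g,\R/\Z)\setminus\{0\}$ are handled correctly.

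There is, however, a genuine gap in the main case. The realization criterion as you first state it (``$P=0$ or positivity of $i\langle P\cup\bar P,[\Sigma_g]\rangle$'') is not the full Haupt theorem: when the image $\Lambda=P(H_1(\Sigma_g,\Z))$ is a lattice one needs in addition $\langle X\cup Y,[\Sigma_g]\rangle>\operatorname{covol}(\Lambda)$, i.e.\ the induced map to $\C/\Lambda$ must have degree at least two. You do turn to this in your last paragraph, but the argument there is incorrect. The degree is $\langle X\cup Y,[\Sigma_g]\rangle/\operatorname{covol}(\Lambda)$, not $\langle X\cup Y,[\Sigma_g]\rangle$ alone, and along your ray $X_0+nZ_0$ the covolume can grow at the same rate as the numerator. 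Concretely, for $g=2$ with symplectic basis $a_1,b_1,a_2,b_2$, take $Y=a_1^\ast$, $X_0=0$, $Z_0=b_1^\ast$ (so $\langle Z_0\cup Y,[\Sigma_g]\rangle=1>0$): then $P_n(a_1)=i$, $P_n(b_1)=n$, $P_n(a_2)=P_n(b_2)=0$, the image is the lattice $n\Z\oplus i\Z$ of covolume $n$, the volume $\langle X_n\cup Y,[\Sigma_g]\rangle$ is also $n$, and the degree equals $1$ for \emph{every} $n$. None of these lifts is realizable.

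The paper avoids this trap by arguing over the whole $\Z^{2g}$-torsor rather than along a single ray. It first notes that condition~(1) (positivity) holds on an affine half-space of $w$'s, and then shows that among those, the lifts $P+w$ with lattice image and degree one are confined to a subset of rank at most two: such a lift has kernel a symplectic unimodular submodule of rank $2g-2$, which forces $P+w$ into the $\C$-span of two fixed integral classes and hence pins $w$ to at most two rational parameters. Since $2g\geq 4$, removing a rank-two set from a half-space still leaves infinitely many realizable lifts. Your argument can be repaired along these lines, or by constraining the choice of $Z_0$ beyond the bare condition $\langle Z_0\cup Y,[\Sigma_g]\rangle>0$, but as written it does not close.
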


\begin{proof}
Let \(P\) be one such lift for any $p\in H^1(\Sigma_g,\C/\Z)$ and write \( P= u + i v \) with \( u, v \in H^1 (\Sigma_g, \R)\). A necessary and sufficient condition on \(P\) to be the period of a non-zero holomorphic one form on a homologically marked curve is that 

\vspace{0.2cm}

1. the symplectic product \( u\cdot v \) is positive, and 

\vspace{0.2cm}

2. in the case the image of \(P\) in \(\C\), considered as a map from \(H_1(\Sigma_g, \Z)\) to \(\C\), is discrete, that the symplectic product \(u\cdot v\) is strictly greater than the covolume in \(\C\) of the image of \(P\).  

\vspace{0.2cm}

See Otto Haupt's Theorem in \cite{Haupt}. Hence, in the case $p=0$ the only possibility is that the form is zero. The case where $p$ has values in $\mathbb{R}$  has no lift that are periods of holomorphic one-forms. Suppose \( p \in H^1 (\Sigma_g, \C/\Z)\setminus H^1 (\Sigma_g, \R /\Z)\). 

Let now \(P' = P + w \) another lift, where \( w\in H^1 (\Sigma _g, \Z)\), and \( P' = u'+ i v'\) where \(u '= u +w \) and \( v' = v\). Since \( u'\cdot v' = u\cdot v + w\cdot v \), and that \(v\neq 0\) by assumption (otherwise \(p\) would belong to \( H^1 (\Sigma_g, \R/\Z)\)), there is a half space in \( H^1 (\Sigma_g, \Z)\) of choices of \(w\) so that \(P'\) satisfies the volume positivity condition 1. of Haupt's criterion.

Suppose now that for one \(P'\), say \(P\), the first condition is satisfied but not the second. In that case, we know that the kernel of \( P\) is a symplectic unimodular submodule of \( H_1 (\Sigma_g, \Z) \) of rank \(2g-2\), or equivalently that there exists two elements \( a  , b \in H^1 (\Sigma_g, \Z) \) such that \( a \cdot b = 1\) and two elements \( \alpha , \beta \in \C\) such that \(\Im (\beta \overline{\alpha} ) >0\) and \(P = a \alpha + b \beta\). Now let \( P'= P + w \).  Assume that \(P' \) is also of the form \( a' \alpha ' + b' \beta' \) with \( a' \cdot b' = 1\) and \( \Im (\beta' \overline{\alpha'} ) >0 \). We then have \( a' \alpha ' + b' \beta ' = a \alpha + b \beta + w\). This means that \(w\) is a rational combination of \( a\alpha \) and \( b \beta\). So, apart from a submodule of rank at most two, all the elements \(w\) in a half-space of \( H^1 (\Sigma_g, \Z) \) give rise to a period \(P' = P + w\) that satisfies conditions 1. and 2. 

The proposition follows. 
\end{proof}

Let $\mathcal{S}_{g,0}\rightarrow \mathcal{M}_{g,0}$ denote the Torelli cover of the moduli space of genus $g$ curves. Its covering group is the Torelli group $\mathcal{I}(\Sigma_{g,0})\subset \text{Mod}(\Sigma_{g,0})$ formed by elements that act trivially on the symplectic group $H_1(\Sigma_g)$. To each element in $\mathcal{S}_{g,0}$ we can associate a pair $(C,m)$ where $m$ is an isomorphism from $H_1(\Sigma_g)$ to $H_1(C)$.  Denote by $\Omega\mathcal{S}_{g,0}\rightarrow\mathcal{S}_{g,0}$ the pull back of the Hodge (vector) bundle of abelian differentials over $\mathcal{M}_{g,0}$.  The period map is well defined on $\Omega\mathcal{S}_{g,0}$ and the restriction of the period map to a fiber of $\Omega\mrs_{g,0}\rightarrow \mrs_{g,0}$ is linear and injective (recall that on a fixed compact Riemann surface a holomorphic one-form is completely determined by its period homomorphism). Therefore, the restriction of the bundle projection to a fiber of $\per$ is a biholomorphism onto its image. The next Proposition describes the image of this projection by using the injectivity of the  Torelli map $$J:\mrs_{g,0}\rightarrow \mathfrak{S}_g$$ to Siegel space of symmetric complex $g\times g$ matrices with positive definite imaginary part defined as follows: consider a symplectic basis \(a_1,b_1,\ldots, a_g,b_g\) 
of \(H_1 (\Sigma_g , \Z) \), i.e. a basis such that the only non-zero symplectic products of its elements are \(a_i\cdot b_i=+1\) and \(b_i\cdot a_i=-1\). For each homologically marked compact type genus two curve \( (C, m) \), let \(\omega_1, \ldots, \omega_g \) be a basis of the space of holomorphic \(1\)-forms on \(C\) such that \( \int _ {m(a_i)} \omega_j = \delta_{i,j}\), \(\delta_{i,j}\) being the Kronecker symbol. The Jacobian $g\times g$ matrix is  \( J (C) = (\int _{m(b_i)} \omega_j ) _{i,j}\). The fact that it belongs to the Siegel space \(\mathfrak S_g \) is a consequence of Riemann's relations.

\begin{proposition}[McMullen] \label{p: McMullen}
Given \( p \in H^1 (\Sigma_g, \C/\Z)\setminus H^1 (\Sigma_g, \R /\Z)\) for each lift 
\(P\in H^1 (\Sigma_g, \C) \) as in Proposition \ref{p: lifts},
 the set of elements of \(\mrs_{g,0} \) supporting a holomorphic form having period equal to \( P\) is biholomorphic (via $J$) to a slice of the image of $J$ by  by a linear Siegel subspace  \(\mathfrak S_P\subset \mathfrak{S}_g\) of genus $g-1$ .
\end{proposition}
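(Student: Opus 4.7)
The plan is to translate the condition that $(C,m) \in \mathcal{S}_{g,0}$ supports a holomorphic form with period $P$ into a linear condition on the Jacobian matrix $J(C)$, and then identify the resulting affine slice of $\mathfrak{S}_g$ with a linear Siegel subspace of genus $g-1$ via a symplectic splitting adapted to $P$.

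First I would fix the symplectic basis $a_1,b_1,\ldots,a_g,b_g$ and set $\alpha_i = P(a_i)$, $\beta_i = P(b_i)$, viewing $\alpha,\beta \in \mathbb{C}^g$. For a marked curve $(C,m)$, the normalized holomorphic basis $\omega_1,\ldots,\omega_g$ characterized by $\int_{m(a_i)}\omega_j = \delta_{ij}$ expresses any holomorphic form as $\omega = \sum_j c_j \omega_j$, having $a$-periods $c_j$ and $b$-periods $(Z c)_i$, where $Z = J(C)$. Hence $\omega$ has period $P$ exactly when $c = \alpha$ and $Z\alpha = \beta$. By Torelli's theorem (the map $J$ is a holomorphic injection on $\mathcal{S}_{g,0}$), the set of marked curves supporting such a form is biholomorphic via $J$ to $J(\mathcal{S}_{g,0}) \cap \mathfrak{S}_P$, where $\mathfrak{S}_P := \{Z \in \mathfrak{S}_g : Z\alpha = \beta\}$. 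Non-emptiness of $\mathfrak{S}_P$ is guaranteed by Proposition \ref{p: lifts} (Haupt's criterion), which also forces $\alpha \neq 0$, since if $\alpha = 0$ then $\text{Re}(P) \cdot \text{Im}(P) = 0$ in $H^1(\Sigma_g,\mathbb{R})$, contradicting the positivity condition.

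Next I would identify $\mathfrak{S}_P$ as a linear Siegel subspace of genus $g-1$. Via the symplectic pairing, realize $\text{Re}(P)$ and $\text{Im}(P) \in H^1(\Sigma_g,\mathbb{R})$ as symplectic-dual elements $u_0, v_0 \in H_1(\Sigma_g,\mathbb{R})$. Haupt positivity translates into $u_0 \cdot v_0 \neq 0$, so $V_1 := \mathbb{R}\langle u_0,v_0\rangle$ is a symplectic $2$-plane; setting $V_2 := V_1^\perp$ gives a symplectic splitting $H_1(\Sigma_g,\mathbb{R}) = V_1 \oplus V_2$ and hence a totally geodesic embedding $\mathfrak{S}(V_1) \times \mathfrak{S}(V_2) \hookrightarrow \mathfrak{S}_g$. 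Interpreting each $Z \in \mathfrak{S}_g$ as a positive compatible complex structure $J$ on $H_1(\Sigma_g,\mathbb{R})$, a direct computation shows that $Z\alpha = \beta$ is equivalent to the relation $J u_0 = -v_0$ (which, together with $J^2 = -1$, forces $J v_0 = u_0$); since $J$ preserves the symplectic form this forces $J(V_1) = V_1$ and hence $J(V_2) = V_2$. The restriction $J|_{V_1}$ is then the unique positive compatible complex structure on $V_1$ determined by $P$, and the map $J \mapsto J|_{V_2}$ yields the desired biholomorphism $\mathfrak{S}_P \cong \mathfrak{S}(V_2) \cong \mathfrak{S}_{g-1}$.

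The main technical step is the dictionary between the algebraic description of $\mathfrak{S}_g$ (symmetric matrices with positive imaginary part) and its geometric description as positive compatible complex structures, which produces the equivalence between the linear system $Z\alpha = \beta$ and the geometric relation $J u_0 = -v_0$. Once this translation is in place, automatic $J$-invariance of the symplectic complement $V_2$ identifies $\mathfrak{S}_P$ with the fiber $\{J|_{V_1}\} \times \mathfrak{S}(V_2)$ of the product Siegel embedding, and the dimension count $\dim_\mathbb{C}\mathfrak{S}_P = g(g+1)/2 - g = g(g-1)/2 = \dim_\mathbb{C}\mathfrak{S}_{g-1}$ serves as a useful consistency check.
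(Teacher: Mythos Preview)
Your argument is correct and its first half is identical to the paper's: you both express the existence of a holomorphic form with period \(P\) on \((C,m)\) as the linear system \(Z\alpha=\beta\) on \(Z=J(C)\), and conclude via Torelli that the locus in \(\mathcal S_{g,0}\) is \(J^{-1}(\mathfrak S_P)\) with \(\mathfrak S_P=\{Z\in\mathfrak S_g: Z\alpha=\beta\}\).

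The difference is in the second half. The paper stops after deriving the equations and simply asserts that the solution set is a copy of the genus \(g-1\) Siegel space. You supply an actual proof of this identification, via the dictionary between \(\mathfrak S_g\) and positive compatible complex structures on \(H_1(\Sigma_g,\R)\): the equation \(Z\alpha=\beta\) becomes the constraint that \(J\) sends \(u_0\) to \(-v_0\), which pins down \(J|_{V_1}\) on the symplectic \(2\)-plane \(V_1=\R\langle u_0,v_0\rangle\) and leaves \(J|_{V_2}\) free in \(\mathfrak S(V_2)\cong\mathfrak S_{g-1}\). This is a genuine addition over the paper's proof, which treats the identification as known (the proposition is attributed to McMullen). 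Your route has the advantage of making the biholomorphism \(\mathfrak S_P\cong\mathfrak S_{g-1}\) explicit and intrinsic, independent of the choice of integral symplectic basis, whereas the paper's formulation in matrix coordinates leaves this step to the reader.
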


\begin{proof}
Let \(P\) be a lift of \(p\). Suppose there exists a holomorphic \(1\)-form \(\omega_P\) on \( (C,m)\) such that \( (\int \omega) \circ m =P\). Since \(\omega_1,\ldots, \omega_g\) is a basis of \( \Omega (C)=H^0 (K_C) \), we need to have \( \omega_P= P(a_1)  \omega_1 +\ldots + P(a_g) \omega_g\) so that we get the relations 
\[ P(b_i) = P(a_1) J(C) _{i,1} +\ldots + P(a_g) J(C)_{i,g} \text{ for } i=1,\ldots, g.\] 
Reciprocally, these relations clearly imply the existence of a holomorphic \(1\)-form with period \(P\) on \((C,m)\). The set of Jacobians of homologically marked curves of compact type satisfying these relations is a copy of the genus $g-1$ dimensional Siegel space denoted \(\mathfrak S_P\) \end{proof} 

\section{Proof of Theorem \ref{t:Cgn} for $g=1,2$}\label{s:analytic methods for genus one and two} 

\subsection{Genus one}

\begin{lemma} \label{l: computation of the isoperiodic foliation in genus one}
Given an elliptic curve \(E\), and a morphism \( p: H_1 (E,\Z) \rightarrow \C / \Z \), there exists a unique meromorphic form on \(E\), up to translation, that satisfies 
\begin{enumerate} 
\item it is either holomorphic or it has two distinct simple poles, 
\item its period modulo \(\Z\) equals \(p\), 
\end{enumerate}
If \(p\) takes real values and is non vanishing, then we are always in the second case. 
\end{lemma}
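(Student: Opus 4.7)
Realise $E=\C/\Lambda$ with $\Lambda=\Z\omega_1\oplus\Z\omega_2$, $\im(\omega_2/\omega_1)>0$, let $\eta_1,\eta_2$ be the quasi-periods of the Weierstrass $\zeta$-function of $\Lambda$, and let $a=a_1,\,b=a_2$ denote the symplectic basis of $H_1(E,\Z)$ dual to $\omega_1,\omega_2$. The plan is to parametrise candidate forms explicitly on the universal cover of $E$ and to identify the resulting period map with a $\C$-linear isomorphism coming from Legendre's relation. After a translation on $E$ placing one of the (possibly collided) poles at the origin, every form satisfying condition (1) admits the explicit description
\[
\omega_{u,c}\;=\;\bigl(\zeta(z-u)-\zeta(z)\bigr)\,dz\;+\;c\,dz,
\]
for some lift $u\in\C$ of the second pole and some $c\in\C$ (with the normalisation absorbing the $2\pi i$ factor inherent in the paper's residue convention). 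The quasi-periodicity $\zeta(z+\omega_i)=\zeta(z)+\eta_i$ makes $\omega_{u,c}$ descend to $E$ and identifies $(u,c)$ with $(u+\omega_i,c+\eta_i)$; hence forms modulo translation are parametrised by $\C^2/L$, where $L\subset\C^2$ is the rank-two lattice generated by $(\omega_1,\eta_1)$ and $(\omega_2,\eta_2)$. The holomorphic case corresponds exactly to $u\in\Lambda$: after normalising the lift to $u=0$, the $\zeta$-terms cancel and $\omega_{u,c}=c\,dz$.

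Next, compute the period map using $\log\sigma$ as a primitive of $\zeta$ together with the transformation rule $\sigma(z+\omega_i)=-e^{\eta_i(z+\omega_i/2)}\sigma(z)$. A short calculation yields
\[
\int_{a_i}\omega_{u,c}\;\equiv\;c\,\omega_i-\eta_i\,u \pmod{2\pi i\,\Z}.
\]
Thus, after dividing by $2\pi i$, the period map lifts to a $\C$-linear map $\widetilde{\Phi}\colon\C^2\to\C^2$ whose determinant is a non-zero multiple of $\eta_1\omega_2-\eta_2\omega_1=2\pi i$ by Legendre's relation, so $\widetilde{\Phi}$ is an isomorphism. The very same relation, applied to the generators of $L$, shows that $\widetilde{\Phi}(L)=\Z^2$; hence $\widetilde{\Phi}$ descends to a biholomorphism
\[
\Phi\colon\C^2/L\;\xrightarrow{\sim}\;(\C/\Z)^2\;=\;H^1(E,\C/\Z).
\]
This is simultaneously the existence and uniqueness (up to translation) statement of the lemma: the unique preimage of a prescribed $p$ determines a unique $(u,c)$ modulo $L$; the resulting form is holomorphic when $u\in\Lambda$ and has two distinct simple poles otherwise.

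For the last assertion, suppose $p$ takes real values and is non-zero but the realising form were holomorphic, say $\omega=c\,dz$. The period formula at $u=0$ would force $c\omega_1$ and $c\omega_2$ to lie in $i\R$, hence $\omega_2/\omega_1$ to be real, contradicting $\im(\omega_2/\omega_1)>0$ unless $c=0$; but $c=0$ gives $p=0$, again contradicting the hypothesis. The unique realising form must therefore have two distinct simple poles. The principal technical obstacle is the explicit period computation together with verifying the Legendre-type identities sending $L$ bijectively onto $\Z^2$; the rest reduces to linear algebra and careful bookkeeping of the $2\pi i$ factors dictated by the paper's residue convention.
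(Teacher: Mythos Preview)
Your argument is correct and takes a genuinely different route from the paper's. The paper argues abstractly via flat line bundles: it associates to \(p\) the flat unitary line bundle \((L,\nabla)\) with monodromy \(\exp(2i\pi p)\), uses the fact that a degree-zero line bundle on an elliptic curve has (up to scalar) either a nowhere-vanishing holomorphic section or a unique meromorphic section with one simple zero and one simple pole, and recovers \(\omega\) from \(\nabla s=\omega s\). Your approach instead parametrises all candidate forms explicitly by Weierstrass \(\zeta\)-functions, computes the period map, and shows it is a bijection using Legendre's relation. The explicit computation \(\int_{a_i}\omega_{u,c}\equiv c\omega_i-\eta_i u\pmod{2\pi i\Z}\), the determinant \(\eta_1\omega_2-\eta_2\omega_1=2\pi i\), and the verification that \(L\) maps onto \(2\pi i\,\Z^2\) all check out. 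Your handling of the real-periods case is also correct.

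What each approach buys: the paper's line-bundle argument is shorter and conceptually uniform with its treatment of genus two (Theorem~\ref{t: ramified covering genus two}), where the same idea is applied to the Jacobian. Your approach is more elementary and entirely self-contained---it does not invoke the classification of sections of degree-zero bundles---and gives an explicit formula for the realising form, at the cost of the period/Legendre bookkeeping you flag. One small point worth making explicit in a final write-up: after normalising one pole to the origin you have fixed a representative of the translation orbit (the pole of residue \(-1\), say, sits at \(0\)), so \(\C^2/L\) genuinely parametrises forms modulo translation rather than merely modulo \(\Lambda\)-shifts; this is implicit in your set-up but deserves a sentence.
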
 

\begin{proof} To the period \(p\) one can associate the flat unitary line bundle \( (L, \nabla) \) having monodromy \( \exp (2i \pi p ): H_1(E, \Z) \rightarrow \C ^\star  \). Depending on whether this bundle is trivial or not, it has, up to multiplication by a non zero constant,  a unique holomorphic section, or a unique meromorphic section with a simple zero and a simple pole, which can be explicitly described by the quotient of two theta series. Denote this section \(s: E\rightarrow L\); the form defined by \( \nabla s = \omega s\) is either holomorphic or it has two distinct poles and satisfies \( p(\omega) = p\). 

In the case \(p\) is non zero but takes real values, the bundle \(L\) is never trivial, hence the form \(\omega\) cannot be holomorphic in that case.
\end{proof}

\begin{corollary}\label{c:genus one case}
A fiber of the period map $\per_1$ is biholomorphic to a Zariski dense subset of the Teichm\"uller space \( \mathcal T_{1,1}\) of genus one curves with a marked point. If the fiber is over a non zero homomorphism with real values, it is biholomorphic to \(\mathcal T_{1,1}\). In particular any fiber of $\per_1$ is connected. 
\end{corollary}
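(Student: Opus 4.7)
The plan is to use Lemma \ref{l: computation of the isoperiodic foliation in genus one} to exhibit each fiber of $\per_1$ as an explicit subset of $\mathcal T_{1,1}$. Given a non-trivial $p$ and a point $(C,x_-,x_+,m,\omega)\in\per_1^{-1}(p)$, transport $p$ to a morphism on $H_1(C,\mathbb Z)$ via $m$ and apply Lemma \ref{l: computation of the isoperiodic foliation in genus one}: up to translation of $C$, this yields a unique form with period $p$ that is either holomorphic or has two distinct simple poles. When $\omega$ has two poles, requiring that the residue $-1$ pole sit at $x_-$ kills the translation ambiguity and forces $x_+$ to be the residue $+1$ pole. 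Hence the forgetful map
\[
\per_1^{-1}(p)\longrightarrow\mathcal T_{1,1},\qquad (C,x_-,x_+,m,\omega)\longmapsto (C,x_-,m),
\]
is well-defined and injective. (Here I use the genus-one fact that $\mathrm{Mod}(\Sigma_{1,1})$ injects into $\mathrm{Sp}(2,\mathbb Z)$, so an $H_1(\Sigma_1)$-marking on a once-pointed elliptic curve canonically lifts to a point of $\mathcal T_{1,1}$.) The inverse, which reconstructs $\omega$ from $(C,x_-,m)$ as the logarithmic derivative of the theta-quotient section of the flat line bundle of monodromy $e^{2\pi i p}$ used in the proof of Lemma \ref{l: computation of the isoperiodic foliation in genus one}, is holomorphic in the modular parameter, so the map is a biholomorphism onto its image $U\subset\mathcal T_{1,1}$.

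Next I would identify $U$. A point $(C,x_-,m)\in\mathcal T_{1,1}$ fails to lie in $U$ precisely when the form produced by Lemma \ref{l: computation of the isoperiodic foliation in genus one} is holomorphic, equivalently when $p$ admits some lift $P\in H^1(\Sigma_1,\mathbb C)$ realized by a holomorphic differential on $(C,m)$. Applied with $g=1$, Proposition \ref{p: McMullen} (whose Siegel subspace $\mathfrak S_P$ collapses to a single point in this case) shows that each admissible lift $P$ determines at most one such $(C,m)$. Combined with Proposition \ref{p: lifts}, the set of admissible lifts is countably infinite when $p\notin H^1(\Sigma_1,\mathbb R/\mathbb Z)$ and empty when $0\neq p\in H^1(\Sigma_1,\mathbb R/\mathbb Z)$. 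So $U$ is either $\mathcal T_{1,1}$ minus a countable set of points (hence Zariski dense) or equals all of $\mathcal T_{1,1}$, matching both statements of the corollary.

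Connectedness then follows at once: $\mathcal T_{1,1}\cong\mathbb H$ is a connected complex $1$-manifold and removing a countable set of points cannot disconnect it, since countable sets have real codimension $2$. I expect the only technical point to be checking holomorphic dependence of the theta-quotient construction on the modular parameter, which is standard; everything else is a direct synthesis of Lemma \ref{l: computation of the isoperiodic foliation in genus one} with Propositions \ref{p: lifts} and \ref{p: McMullen}.
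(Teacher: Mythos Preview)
Your proposal is correct and follows essentially the same approach as the paper's proof: both invoke Lemma~\ref{l: computation of the isoperiodic foliation in genus one} to identify the fiber with the complement in \(\mathcal T_{1,1}\) of the locus where the form is holomorphic, then use the analysis of lifts from \S\ref{ss: Preliminaries} (Propositions~\ref{p: lifts} and~\ref{p: McMullen}) to show this locus is a countable set, empty in the real case. The paper's proof is simply more terse, while you spell out the biholomorphism explicitly and justify why the homology marking determines a point of \(\mathcal T_{1,1}\) via the triviality of the genus-one Torelli group.
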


\begin{proof} 
This is an immediate corollary of Lemma \ref{l: computation of the isoperiodic foliation in genus one}, as soon as one sees that in a level set of the period map, the one corresponding to case (1) is a Zariski closed set of dimension zero. But this set is easily described: it corresponds exactly to the possible lifts in \( H^1 (E, \C) \)  of \(p\) having positive volume, see subsection \ref{ss: Preliminaries}. In the case where \(p\) takes real values, one cannot lift \(p\) to a period of positive volume, so the corollary follows.
\end{proof}


\begin{remark}
A more geometric argument can be developed to connect every pair of isoperiodic forms by a more explicit isoperiodic path. The argument uses a surgery -- Schiffer variations-- to connect first every form to a form with a single zero in its isoperiodic set. A few further Schiffer variations between forms with a single zero and an analysis of the action of the mapping class group, allows to prove that all isoperiodic forms with a single zero lie in the same connected component of the isoperiodic set. The advantage of this geometric argument with respect to the previous analytic argument,  is that it can be generalized to the case of genus one with any number of poles. This approach will be dealt with in \cite{ACD}. 
\end{remark}

\subsection{Genus two}
\label{ss:genus 2}
Recall that in the genus two case, there is a holomorphic embedding \( J : \mathcal{S}_{2,0} \rightarrow \mathfrak S_2\) of the Torelli space --the cover of \(\mathcal{M}_{g,0}\) whose covering group is the Torelli group \(\mathcal{I}(\Sigma_{g,0})\)-- in the Siegel space consisting of symmetric two by two matrices with complex coefficients and positive definite imaginary part. The map \(J\) associates to a homologically marked smooth curve of genus two its Jacobian matrix. The complement \( \mathfrak S_2 \setminus J (\mathcal{S}_{2,0}) \) is the set of Jacobian matrices of a nodal genus two homologically marked curve of compact type: this set is the union, parametrized by \( \text{Sp} (4,\Z) /\left(\text{Sp}(2,\Z) \times \text{Sp} (2,\Z)\right) \), of the images of the set of diagonal matrices with coefficients in the upper half-plane by an element of the group \( \text{Sp}(4,\Z)\).

\begin{theorem} \label{t: ramified covering genus two}
Given any \( p \in H^1 (\Sigma_2, \C/ \Z)\) of degree at least two, the restriction of the forgetful map  \[  \Omega^\pm\mathcal{S}_{2,\text{\sout{2}}}\rightarrow \mathcal{S}_{2,0}\text{ defined by } (C,x_-,x_+,m,\omega)\mapsto (C,m)\] to the isoperiodic set \( \per_2^{-1} ( p) \) is a ramified double cover over the open (connected) subset 
\[  \mathcal{S}_{2,0} \setminus \bigcup _{P \text{ lift of } p}   J^{-1} (\mathfrak S_P ).\]
The critical points correspond to homologically marked meromorphic \(1\)-forms of period \(p\) that are odd with respect to the hyperelliptic involution on the corresponding genus two curve. 
\end{theorem}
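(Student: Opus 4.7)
The plan is to parametrize the fiber of the forgetful map over a point $(C,m)\in\mathcal{S}_{2,0}$ using the classical theory of third-kind differentials and then relate it to the Abel--Jacobi map $\Phi\colon C\times C\to \operatorname{Jac}(C)$, $(x_+,x_-)\mapsto u(x_+)-u(x_-)$, on the hyperelliptic curve $C$.

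First I would show that the image of $\per_2^{-1}(p)$ under the forgetful map lies in the open set $\mathcal{S}_{2,0}\setminus\bigcup_P J^{-1}(\mathfrak{S}_P)$. If $(C,m)\in J^{-1}(\mathfrak{S}_P)$ for some lift $P$ of $p$, there is a non-zero holomorphic form $\eta_P$ on $C$ with period $P$. Then, for any hypothetical $(x_-,x_+,\omega)$ in the fiber, the difference $\omega-\eta_P$ would be a meromorphic form with simple poles of residues $\mp 1,\pm 1$ and trivial period in $H^1(\Sigma_2,\mathbb{C}/\mathbb{Z})$; by the Remark following the definition of degree, this forces the underlying curve to have genus zero, contradicting $g=2$. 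Connectedness of the target is then clear: each $J^{-1}(\mathfrak{S}_P)$ is a complex analytic submanifold of codimension two (for $g=2$, $\mathfrak{S}_P\cong\mathfrak{S}_1$ is one-dimensional inside the three-dimensional $\mathfrak{S}_2$), so a countable union of such subvarieties cannot disconnect the connected three-dimensional manifold $\mathcal{S}_{2,0}$.

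Next, over a point $(C,m)$ of this open set, I would describe the fiber explicitly. Every form in it can be written $\omega=\eta_{x_-,x_+}+c_1\omega_1+c_2\omega_2$, where $\omega_1,\omega_2$ is the normalized holomorphic basis ($\int_{m(a_i)}\omega_j=\delta_{ij}$) and $\eta_{x_-,x_+}$ is the unique third-kind differential with vanishing $a$-periods and residues $\mp 1,\pm 1$ at $x_\mp,x_+$. The $a$-period conditions force $c_i\equiv p(a_i)\pmod{\mathbb{Z}}$, and the Riemann bilinear relation for third-kind differentials collapses the $b$-period conditions into a single Abel--Jacobi equation
\[
u(x_+)-u(x_-)=\beta(C,m,p)\in\operatorname{Jac}(C),
\]
whose right-hand side is independent of the chosen lift of $p$ (different lifts translate it by a period lattice vector). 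Once $(x_+,x_-)$ solves this equation, the coefficients $c_i$ are uniquely determined, since any remaining ambiguity would live in $\Omega(C)\cap H^1(\Sigma_2,\mathbb{Z})$, which vanishes universally by Haupt's criterion (an integer period has zero symplectic volume). Hence the fiber of the forgetful map is in bijection with $\Phi^{-1}(\beta(C,m,p))\subset C\times C$.

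For $g=2$, the hyperelliptic relation $u(x)+u(\sigma(x))=K$ gives $\Phi(\sigma(x_-),\sigma(x_+))=\Phi(x_+,x_-)$, exhibiting a deck involution on $\Phi$; combined with $\Theta^2=2$ on a principally polarized abelian surface, this identifies $\Phi$ as a surjective holomorphic map of degree two, ramified exactly along $\{x_+=\sigma(x_-)\}$. Consequently the forgetful map is a ramified double cover. At a ramification point, $\sigma^*\omega$ and $-\omega$ have the same polar data, so $\omega+\sigma^*\omega$ is holomorphic and $\sigma$-invariant; since $\sigma$ acts as $-1$ on $\Omega(C)$ for a hyperelliptic curve this sum vanishes, giving $\sigma^*\omega=-\omega$ and identifying the critical points with the odd forms. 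The main obstacle is the Abel--Jacobi reformulation in the second step: one must set up the bookkeeping of lifts so that $\beta(C,m,p)$ is well-defined in $\operatorname{Jac}(C)$ and verify that the third-kind bilinear relation transforms the period constraints cleanly into a single equation on the Jacobian together with the uniqueness claim for $\omega$; once this is done, surjectivity of $\Phi$ and the hyperelliptic symmetry handle the rest.
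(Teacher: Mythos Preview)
Your argument is correct and lands on the same Abel--Jacobi equation as the paper, but you reach it by a different route. The paper packages everything into the flat line bundle \(L_p\) with monodromy \(\exp(2i\pi p)\): a meromorphic form \(\omega\) of period \(p\) yields the section \(s_\omega=\exp(2i\pi\int\omega)\) of \(L_p\) with divisor \(x_+-x_-\), so the fiber over \((C,m)\) is identified with the set of representations of \(L_p\) as \(\mathcal O(x_+-x_-)\); Lemma~\ref{l: Jacobian} then gives the two-to-one structure and the involution \((x_+,x_-)\mapsto(i(x_-),i(x_+))\) directly. Your approach instead writes \(\omega=\eta_{x_-,x_+}+c_1\omega_1+c_2\omega_2\) and uses the reciprocity law for third-kind differentials to convert the \(b\)-period constraints into \(u(x_+)-u(x_-)=\beta(C,m,p)\) in \(\operatorname{Jac}(C)\). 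These are the same equation: your \(\beta\) is precisely the point of \(\operatorname{Jac}(C)\) represented by \(L_p\). The paper's line-bundle framing absorbs all the bookkeeping of integer lifts into the definition of \(L_p\), so no explicit tracking of \(a\)- and \(b\)-period congruences is needed, and the identification of the excluded locus with ``\(L_p\) trivial'' is immediate. Your route is more explicit but requires (as you note) care that different lifts of \(p\) only translate \(\beta\) by lattice vectors, and your uniqueness step is handled by the vanishing of holomorphic forms with integer periods, where the paper instead uses that a holomorphic form with trivial period must vanish. One small point worth making explicit in your write-up: over the excluded locus your \(\beta\) equals \(0\), and \(\Phi^{-1}(0)\) is the diagonal \(\{x_+=x_-\}\), which is disallowed since the two poles must be distinct---this is what matches your Step~1 with the description of the fiber in Steps~2--3.
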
 

Before giving the proof, let us recall the following classical lemma 

\begin{lemma}\label{l: Jacobian}
Let $C$ be a genus two smooth complex curve and $i:C\rightarrow C$ the hyperellptic involution. The map \( (q_1, q_2 ) \in C^2 \mapsto q_1 - q_2 \in J(C) \) is invariant by the involution \( I (q_1, q_2) = (i(q_2), i(q_1) ) \), and the induced map \( C^2 / I \rightarrow J(C) \) is the blow-up of \(J(C) \) at the point \(0\). The exceptional divisor is the quotient by \(I\) of the diagonal in \(C^2\).
\end{lemma}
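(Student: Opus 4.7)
The plan is to reduce the statement to the classical Abel--Jacobi picture on $\mathrm{Sym}^2 C$ by means of an explicit isomorphism $C^2/I \cong \mathrm{Sym}^2 C$.

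First, I would verify the $I$-invariance as follows. Fix a Weierstrass point $w \in C$ and use the Abel--Jacobi embedding $\mu : C \to J(C)$, $\mu(q) = [q-w]$. Since $q + i(q) \sim 2w$ for every $q \in C$ (as both divisors lie in the unique $g^1_2$ cut out by the pencil through a Weierstrass point), we get $\mu(i(q)) = -\mu(q)$, i.e.\ the hyperelliptic involution corresponds to $[-1]$ on $J(C)$. Hence $\phi(i(q_2), i(q_1)) := i(q_2) - i(q_1) = -\mu(q_2) + \mu(q_1) = \phi(q_1,q_2)$, proving invariance.

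Next, introduce the biholomorphism $\alpha : C \times C \to C \times C$, $\alpha(q_1, q_2) = (q_1, i(q_2))$. A direct computation shows $\alpha \circ I \circ \alpha^{-1}$ is the swap involution $\sigma(q_1,q_2) = (q_2,q_1)$, so $\alpha$ descends to an isomorphism $C^2/I \cong C^2/\sigma = \mathrm{Sym}^2 C$. Under this identification $\phi$ becomes the map $[q_1+q_2] \mapsto q_1 - i(q_2) = [q_1 + q_2 - K_C]$ in $J(C)$, which is the classical Abel--Jacobi map composed with a translation.

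The third step is to invoke the standard genus two Abel--Jacobi theorem: the map $\mathrm{Sym}^2 C \to J(C)$ sending $[D] \mapsto [D - K_C]$ is a birational morphism between smooth projective surfaces (both quotients are smooth because the involutions fix smooth divisors, acting locally as reflections). Its fiber over any point of $J(C)$ is the corresponding complete linear system, all of which are single points except the one over $0$, which is $|K_C| \cong \mathbb{P}^1$. A birational morphism of smooth projective surfaces contracting a single $\mathbb{P}^1$ to a point on a smooth target is necessarily the blow-up at that point, by Castelnuovo's criterion (the contracted curve must be a $(-1)$-curve). This yields the blow-up description.

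Finally, I would identify the exceptional divisor. Inside $\mathrm{Sym}^2 C$, the fiber over $0$ is $|K_C| = \{[q + i(q)] : q \in C\}$. Pulling this back through $\alpha^{-1}$ gives precisely the diagonal $\{(q,q) : q \in C\} \subset C^2$, whose image in $C^2/I$ is the stated quotient of the diagonal (which is itself a copy of $C/\langle i \rangle \cong \mathbb{P}^1$). The main potential obstacle is really only bookkeeping: checking carefully that the various identifications (choice of translation, orientation of the involution) are compatible, and justifying smoothness of the quotients; the deeper content is entirely contained in the classical Abel--Jacobi theorem for genus two.
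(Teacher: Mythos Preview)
Your proof is correct and takes a genuinely different route from the paper's. The paper argues by a direct fiber computation: given two ordered pairs $(q_1,q_2)\neq(q_1',q_2')$ with the same image in $J(C)$, the divisor $(q_1+q_2')-(q_1'+q_2)$ is principal, and one checks by hand (using uniqueness of the $g^1_2$) that either both pairs lie on the diagonal or they are exchanged by $I$. This establishes the fiber structure of the map without ever naming $\mathrm{Sym}^2 C$ or Castelnuovo; the blow-up description is then left essentially implicit.

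Your approach instead conjugates $I$ to the swap via $\alpha(q_1,q_2)=(q_1,i(q_2))$, identifying $C^2/I$ with $\mathrm{Sym}^2 C$ and the map with the translated Abel--Jacobi morphism $[D]\mapsto[D-K_C]$. This is more conceptual: it reduces everything to the standard genus-two picture and makes the blow-up statement an honest consequence of Castelnuovo's criterion, at the cost of invoking the structure theory of birational morphisms of smooth surfaces. The paper's argument is more self-contained but sketchier on the birational-geometry conclusion; yours is cleaner on that point but relies on heavier background. Both are valid, and your reduction via $\alpha$ is a nice observation that the paper does not make explicit.
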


\begin{proof}
Take two distinct couples of points of \(C\), \( (q_1, q_2) \neq (q_1', q_2') \), such that \( q_1 - q_2 = q_1' - q_2' \) in \( J(C)\). Then the divisor \( (q_1 + q_2') - ( q_1' + q_2) \) is principal, namely either \( \{ q_1, q_2'\} = \{ q_1' , q_2\} \), or there exists a non constant meromorphic function \( f : X\rightarrow \mathbb P^1 \) such that \(   (q_1 + q_2') - ( q_1' + q_2) = (f) \). The first case occurs when \( q_2= q_1\) and \( q_2' = q_1'\), corresponding to the level set of the trivial bundle in the Jacobian. In the second case, by uniqueness of the hyperelliptic involution on \(C\), the function \(f\) is the quotient of the hyperelliptic involution post composed by a biholomorphism \( C/ i \rightarrow \mathbb P^1\), and we then have \(\{ q_1', q_2 \} = i (\{ q_1, q_2'\})\), which shows that \( (q_1', q_2') = I (q_1, q_2)\).
\end{proof}

\begin{proof}[Proof of Theorem \ref{t: ramified covering genus two}]
Let \( (C,m)\in \mathcal{S}_{2,0}\) a homologically marked genus two curve. We denote by \( i_C : C\rightarrow C\) the hyperelliptic involution on \(C\), and by \( (L_p, \nabla _p)  \) the flat line bundle over \(C\) whose monodromy is given by the character \( \rho _p = \exp (2i \pi p)\circ m^{-1} \in H^1 (\Sigma _g , \C ^*) \).  Forgetting the flat connection, the line bundle \( L_p \) has degree zero, and defines an element of the Jacobian \( J(C) \). 

Given a meromorphic form \(\omega\) on \(C\) with two poles, both simple and having peripheral periods \(\pm 1\), the function \( \exp \left(2i\pi \int \omega \right) \) is a meromorphic function on \(\widetilde{C}\) which is \( \rho_p\)-equivariant. Hence, it induces a meromorphic section \( s_\omega : C\rightarrow L_p\), whose divisor is \(2i\pi\) times the residue divisor of \(\omega\). In particular, the bundle \( L_p\) is non trivial by Lemma \ref{l: Jacobian}. 

Reciprocally, suppose that \( L_p\) is non trivial. Then Lemma \ref{l: Jacobian} shows that it has two meromorphic sections, both having one simple zero and one simple pole. The logarithmic derivative of any such section with respect to \( \nabla _p\) gives the two desired meromorphic one forms on \(C\) with two poles, both simple and with peripheral periods \(\pm 1\), that have a period modulo \(\Z\) equal to \( p\).  

Notice that if we denote by \( (q_1,q_2)\) the two poles of the first section and by \( (q_1', q_2')\) the two poles of the second one, ordered so that the residues are \( 1/ 2i\pi\) at \(q_1\) and \(q_1'\) and \(-1/2i\pi \) at \(q_2\) and \(q_2'\), then \(  I( q_1,q_2) = (q_1', q_2')\). Hence the critical points of the projection of the isoperiodic set \( \text{Per}^{-1} (p)\) to the Torelli space \(\mathcal{S}_{2,0}\) correspond to the meromorphic differentials whose couple of ordered poles satisfy \( I(q_1, q_2) = (q_1, q_2)\). This means that the two poles are exchanged by the hyperelliptic involution.  Hence, if we denote by \(\omega\) such a meromorphic form, the form \( - i_C^* \omega\) has period \(p\) (this is because the hyperelliptic involution acts by multiplication by \(-1\) on the first cohomology group of \(C\)), and the same residue divisor as \(\omega\). Hence, the sum \( \omega + i_C^* \omega\) is a holomorphic one form with trivial period homomorphism: it needs to be identically zero, and thus \(\omega\) is odd with respect to the hyperelliptic involution.

To conclude, notice that if \(L_p\) is trivial, then it has a section that is nowhere vanishing. The logarithmic derivative of that section is a holomorphic one form on \(C\) whose period modulo \(\Z\) is equal to \(p\). Hence the period of that differential is a lift \(P\) of \(p\), and \(J(C)\) belongs to \(\mathcal S_P\). Reciprocally, each curve \(C\) whose Jacobian lies in \( \mathfrak S_P\) for some lift \(P\) of \(p\) has a holomorphic differential \(\omega\) whose period is \(P\). Then the multi-valued function \( \exp (2i\pi \int \omega) \) is \(\rho_p\)-equivariant and does not vanish, and defines a holomorphic section of \( L_p\) that vanishes nowhere. Hence the bundle \(L_p\) is trivial in this case.  
\end{proof}

\begin{corollary} \label{c: connectedness genus two}
If \(p \in H^1(\Sigma_2, \C/\Z) \) has degree at least three (it might be infinite), then the branched double cover of Theorem \ref{t: ramified covering genus two} has a branch point.  It is therefore connected. If \( p\) has degree two, there are no branch points. 
\end{corollary}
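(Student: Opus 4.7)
The two halves of the Corollary are handled separately.

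For the case $\deg p = 2$, suppose toward a contradiction that $(C,m,\omega)$ is a branch point. By Theorem \ref{t: ramified covering genus two}, $\omega$ is odd under the hyperelliptic involution $i_C$, with distinct simple poles $x_-$ and $x_+=i_C(x_-)$ of residues $\mp 1$. Because $2p=0$ in $H^1(\Sigma_g,\mathbb{C}/\mathbb{Z})$, the function $F:=\exp(4\pi i\int\omega)$ is single-valued on $C$, meromorphic with divisor $2(x_+-x_-)$, and therefore defines a degree-$2$ morphism $F:C\to\mathbb{P}^1$. The uniqueness of the $g^1_2$ on a genus-$2$ curve identifies $F$ with the hyperelliptic projection up to a M\"obius transformation, forcing $x_-$ to be a Weierstrass point. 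This contradicts $i_C(x_-)=x_+\neq x_-$.

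For the case $\deg p\geq 3$, by the proof of Theorem \ref{t: ramified covering genus two}, a branch point over $(C,m)\in\mathcal{S}_{2,0}$ corresponds to $q\in C$ with $L_p=[q-i_C(q)]$ in $J(C)$; using $q+i_C(q)\sim K_C$ this reads $L_p+K_C=2q$ in $\mathrm{Pic}^2(C)$. Setting $\mathcal{W}_C:=\{[2q-K_C]:q\in C\}\subset J(C)$, a branch point exists iff $L_p\in \mathcal{W}_C$ for some $(C,m)\in\mathcal{S}_{2,0}$. The relative version $\mathcal{W}\subset\mathcal{J}$ (over $\mathcal{S}_{2,0}$) is codimension-$1$ in the $5$-dimensional relative Jacobian, while $s_p:(C,m)\mapsto L_p$ is a $3$-dimensional section, so the expected intersection is $2$-dimensional.

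To establish non-emptiness: in the finite-order case $\deg p=d\geq 3$, $L_p$ identifies via the marking with a fixed element $L\in (\mathbb{Z}/d)^{2g}$ of order $d$. The $\mathrm{Sp}(2g,\mathbb{Z})$-action on $\mathcal{S}_{2,0}$ is equivariant with the natural action on markings, so the complex dimension of $X_L:=\{(C,m):L\in\mathcal{W}_C\}$ is constant on $\mathrm{Sp}(2g,\mathbb{Z}/d)$-orbits. One constructs an explicit pair $(C_0,q_0)$ with $q_0$ non-Weierstrass satisfying the divisor condition $2dq_0\sim dK_{C_0}$, a nonvacuous constraint cutting out finitely many points on $C_0$; this yields $L_0=[q_0-i_{C_0}(q_0)]$ of order $d$ and a point $(C_0,m_0)\in X_{L_0}$. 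By Witt-type orbit-transitivity of $\mathrm{Sp}(2g,\mathbb{Z}/d)$ on elements of each fixed order $\geq 3$, it follows that $X_L\neq\emptyset$ for every such $L$. The infinite-order case is handled either by approximation from finite-order periods $p_n\to p$, using openness of the branch-point locus coming from the submersion property in Proposition \ref{p:period map on smooth curves}, or by a direct infinitesimal transversality argument verifying that $s_p$ crosses $\mathcal{W}$. Once a branch point has been established, the ramified double cover of Theorem \ref{t: ramified covering genus two} over the connected base $\mathcal{S}_{2,0}\setminus\bigcup_P J^{-1}(\mathfrak{S}_P)$ is automatically connected.

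The principal obstacle is making the Sp-orbit transitivity rigorous for non-prime $d$ and cleanly handling the infinite-order case; the cleanest resolution is to exhibit, via the Abel--Jacobi calculation above, one explicit branch point in every orbit, thereby reducing the statement to a combinatorial claim on symplectic modules that is precisely dual to the $2$-torsion obstruction uncovered in the $\deg p = 2$ case.
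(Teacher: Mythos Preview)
Your degree-two argument is correct and coincides with the paper's: both observe that the exponentiated integral realises the hyperelliptic projection, which forces the poles to be Weierstrass points (equivalently, forces $\omega$ to be even), contradicting the odd condition characterising branch points.

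For degree at least three, your approach is genuinely different from the paper's but is not a proof. The paper constructs branch points explicitly as translation surfaces with a built-in order-two symmetry. When $p$ is not purely real, it takes a lift $P$ realised by a holomorphic differential on a marked genus-two curve (such differentials are automatically odd), slits at a Weierstrass point, and glues in the bi-infinite cylinder $\C/\Z$ along a symmetric slit; the result is odd by construction and has period $p$. When $p$ is real, the paper gives a more elaborate cylinder assembly: after arranging a symplectic basis with $p(a_1),p(a_2),p(a_1+a_2)\neq 0$ and suitable lifts summing to $1$, it builds a half-surface $X^+$ from one semi-infinite and three finite cylinders, doubles it via $z\mapsto -z$, and tunes the gluing twists so that the periods match $p$. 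In both cases the symmetry is manifest, so the odd condition is immediate.

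Your route via $L_p\in\mathcal{W}_C=\{[2q-K_C]\}$ and dimension counts is a reasonable strategy, but the two load-bearing steps are only asserted. First, you never actually produce a pair $(C_0,q_0)$ with $[2q_0-K_{C_0}]$ of exact order $d$; the sentence ``One constructs an explicit pair\ldots'' is precisely where the work lies, and you acknowledge as much in your closing paragraph. Second, the infinite-degree case is not handled: the branch locus (odd forms) is a closed hypersurface in $\Omega^\pm\mathcal{S}_{2,\text{\sout 2}}$, not open, so ``openness of the branch-point locus'' as stated is false; what you would need is that the restriction of $\Per_2$ to the odd locus is still a submersion, which you do not verify. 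The $\text{Sp}(2g,\Z/d)$-transitivity you invoke does reduce the finite case to a single $p_0$ per degree (transitivity on primitive vectors in $(\Z/d)^{2g}$ is indeed true), but without the explicit $(C_0,q_0)$ this reduction is empty. The paper's flat-geometry construction sidesteps all of this by building the required odd form directly for every $p$.
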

In next section we will determine an invariant that shows that the branched cover is disconnected when the degree is two  

\begin{proof}
Recall that a branch point of the map is a homologically marked meromorphic one form of period \(p\) which is odd with respect to the hyperelliptic involution on the corresponding genus two curve (see Theorem \ref{t: ramified covering genus two}). 

As is well known (see \cite{boissy} for details), there is a dictionary between meromorphic forms on a smooth genus $g$ curve and singular translation structures with some particular types of singularities on a compact topological surface of genus $g$. The period homomorphism of the form corresponds to the holonomy of the translation structure. The 'odd' point will be realized by producing a translation structure with the prescribed singularities, holonomy and  symmetry (corresponding to the hyperelliptic involution). One of the advantages is that the construction is not analytic, but geometric in spirit. The complex structure is obtained \textit{a posteriori}.

First assume that the image of \(p\) is not contained in the real line. Let \(P\) be a lift of \(p\) for which there exists a homologically marked holomorphic differential \( ( C, m , \omega) \) with period \( (\int \omega) \circ m= P\), whose existence has been established in  Proposition \ref{p: lifts}. The form \(\omega\) is odd with respect to the hyperelliptic involution, namely it satisfies \( i_C^* \omega = -\omega\). Among the six points fixed by the hyperelliptic involution on \(C\), there are at least four at which the form \(\omega\) does not vanish. At one of these points, slit \(C\) along a small vertical segment of length \(l\) invariant under the hyperelliptic involution, slit the infinite cylinder \( \C /\Z \) along a vertical segment of length \(l\), and glue these two slit surfaces together in order to get a homologically marked meromorphic one form which is odd with respect to the hyperelliptic involution. So we are done in this case. 

Now let us assume that \(p\) takes only real values and that its degree is at least three.  We begin by the following very elementary  

 \vspace{0.2cm} 
 
 \noindent \textit{Claim 1: there exists a symplectic basis \( a_1, b_1, a_2, b_2\) of \( H_ 1 (\Sigma_2, \Z)\) such that none of the periods \( p( a_1 ) , p(a_2) \) nor \( p(a_1+a_2) \) vanish.}
 
 \vspace{0.2cm} 
 
 
 \begin{proof} 
 Since \(p\) is not zero modulo \(\frac{1}{2} \mathbb Z\), by Corollary \ref{c:padmissible decompositions exist}, there exists a symplectic decomposition \(H_1(\Sigma_2,\mathbb Z) = H_1\oplus H_2 \) so that the restriction of \(p\) to each module \(H_k\), \(k=1,2\), is not zero modulo \(\frac{1}{2} \mathbb Z\). In each of the symplectic submodules \(H_k\), let \(a_k , b_k\) be a symplectic basis so that \( p(b_k)=0 \). Then, \(p(a_k)\) does not belong to \( \frac{1}{2}\mathbb Z /\mathbb Z\) by construction. If \( p (a_1 + a_2) \neq  0\), we are done. Otherwise, \(p(a_2) = - p(a_1)\), so define \(a_2 ' = -a_2\) and \(b_2' = - b_2\). We then have \( p (a_1 + a_2 ') = 2p (a_1) \neq 0\), and the basis \( a_1, b_1 , a_2 ', b_2' \) works.   
\end{proof} 
 
 Now let us denote by \(p_-, p_+\) two distinct points in \( \Sigma_2\), and let \( \widetilde{a_1},\widetilde{b_1}, \widetilde{a_2}, \widetilde{b_2}\in H_1(\Sigma_2 \setminus \{p_-,p_+\}, \Z)\) lifts of the elements \( a_1,b_1,a_2,b_2\). Denoting by \( \pi^{\pm} \in H_1(\Sigma_2\setminus \{p_-, p_+\},\Z ) \) the peripheral homology classes  around \(p_\pm\), the family \( \widetilde{a_1}, \widetilde{b_1}, \widetilde{a_2}, \widetilde{b_2}, \pi^+\) is a basis of \( H_1 (\Sigma_2\setminus \{p_-,p_+\}, \Z)\). We have the following 
 
 \vspace{0.2cm} 
 
 \textit{Claim 2: one can choose the symplectic basis \(a_1, b_1, a_2, b_2\) and its lift and \( P \in H^1 (\Sigma_2 \setminus \{p_-, p_+\}, \R) \) whose reduction modulo \(\Z\) equals \(p\), and which is such that \[ P (\widetilde{a_1} ), P(\widetilde{a_2} )  >0\text{ and } P(\widetilde{a_1} ) + P(\widetilde{a_2}) <1.\]}
 \begin{proof} Consider the basis \( a_1, b_1, a_2, b_2\) constructed in the Claim 1. Let \(P\) be any lift of \(p\) satisfying that \( P( \widetilde{a_k}) \) is the determination of \( p(a_k) \) modulo \(\Z\) that belongs to \( (0, 1) \).  Observe that \( P (\widetilde{a_1} ) + P (\widetilde{a_2}) \) belongs to \( (0, 2) \setminus \{1\}\) since \(p(a_1)+p(a_2) \neq 0\). If \( P(\widetilde{a_1}) + P(\widetilde{a_2}) <1\) we are done. Otherwise, we have \(P(\widetilde{a_1}) +P(\widetilde{a_2}) >1\). In this case, introduce the new symplectic basis \(a_k'= -a_k, b_k'=-b_k\), and its lift \( \widetilde{a_k'}=-\widetilde{a_k}\), \(\widetilde{b_k'}=-\widetilde{b_k}\). Define \(P'\) so that \( P' (\widetilde{a_k'})\) is the determination of \( p(a_k ') \) modulo \(\Z\) lying in the interval \( (0, 1)\). We have \( P' (\widetilde{a_k'}) = 1- P( \widetilde{a_k}) \). Hence, \(P' (\widetilde{a_1'})+P' (\widetilde{a_2'}) = 2 - (P(\widetilde{a_1})+P(\widetilde{a_2})) <1\), and we are done.  \end{proof}

Let now \( \widetilde{a_3}\in H_1 (\Sigma_2\setminus \{p_-, p_+\}, \Z)\) be defined by the relation \( \widetilde{a_1}+\widetilde{a_2}+\widetilde{a_3} = \pi^+\). Since \( P (\pi^+)=1\), we have 
\[ P(\widetilde{a_k})>0 \text{ for } k=1,2,3 \text{ and } P(\widetilde{a_1}) + P(\widetilde{a_2} ) + P(\widetilde{a_3}) =1 .\]    

Consider a translation surface \( X^+\) made of four vertical cylinders \( C_0, C_1, C_2, C_3\). The first one, \(C_0\), is a semi-infinite cylinder of circumference \(1\) isomorphic to \( \overline{\mathbb H}/\Z\), the other three, \(C_1, C_2, C_3\), are compact cylinders of height \(1\) and of circumference \( P(\widetilde{a_1}) , P(\widetilde{a_2})\), and \(P(\widetilde{a_3})\). We glue these three cylinders below \(C_0\), to form a translation surface homeomorphic to a sphere minus three open discs and one puncture. 

\begin{figure}[h]
     \centering
     \includegraphics[height=7cm]{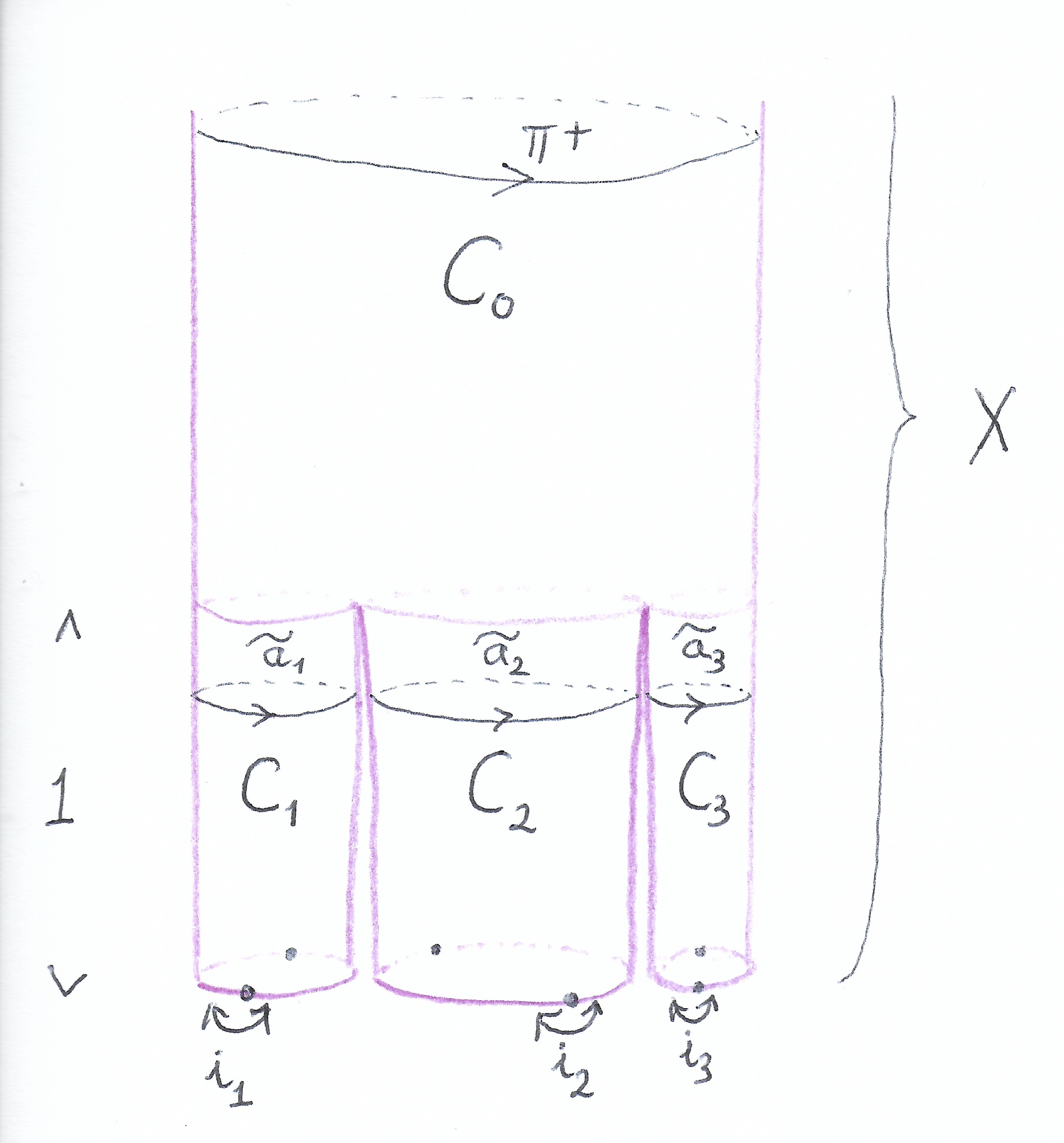}
     \caption{The surface with boundary \(X^+\)}
     \label{fig:branch_point}
 \end{figure}

This translation surface is non compact and has three boundary components \(\partial _k  X\) that are horizontal geodesics of length \( P(\widetilde{a_k}) \). Choosing reversing orientation isometries \(i_k \) of the boundary components \( \partial _k X^+\), one construct a translation surface \(X\) in the following way: let \( X^-\) be the translation surface obtained from \(X^+\) by modifying each charts of the translation structure by its composition with \(-\text{id}\). The reversing orientation isometries \(i_k\) can be viewed as isometries from \(\partial _k X^+ \subset \partial X^+\) to \(\partial _k X^-\subset \partial X^-\). Use these isometries to glue \( X^+\) to \(X^-\) and set \(X\) to be the resulting translation surface. The identifying map from \( X^+\) to \(X^-\) and from \( X^- \) to \( X^+\) defines an involution \(i_X: X\rightarrow X\) which in the charts of the translation structure of \(X\) takes the form \( z\mapsto - z + \text{cst}\).  Let \( m: H_1 (\Sigma_g\setminus \{q^+, q^-\}, \Z) \rightarrow H_1(X, \Z) \)  be any marking of \( X\) sending \( \pi ^\pm \) to the peripheral of \( C_0^\pm\), and \( \widetilde{a_k}\) to the boundary \(\partial_k X^+\). If we denote by \(\omega\) the differential on \(X\) defined by \(\omega = dz \) in any translation chart \(z\), we then have \(\int _{m(\widetilde{a_k}) } \omega = P(\widetilde{a_k})\). 

This construction depends on some continuous parameters that are the isometries \( i_ k\). Conjugating \( i_k \) by a rotation of angle \(\theta_k\) results in twisting with  angle \(\theta_k\)  along the geodesic \(\partial _k X^+\). The periods of the new homologically marked holomorphic form \( (X', \omega', m') \) (the marking \(m'\) is obtained from the marking \(m\) using the Gauss Manin connection) can be computed by the following simple formula
\[ \int _{m(\widetilde{\gamma})} \omega ' = \int _{m(\widetilde{\gamma})} \omega + \sum _l  \theta_l\  \widetilde{a_l}\cdot \widetilde{ \gamma } ,\]
where as usual the symplectic product is denoted by a dot. We can then choose the angles \(\theta_k\), for \(k=1,2\), and \(\theta_3=0\), in such a way that 
\[ \int _{m(\widetilde{b_k})} \omega' = P( \widetilde{b_k}) .\]
The form  \( (\overline{X'}, \omega', \overline{m'})\in \Torelliformg \) obtained from \( (X, \omega, m) \) by adding two points at its two ends is the desired element of  \( \text{Per}^{-1}( p) \) which is odd with respect to hyperelliptic involution. Hence, \(\text{Per}^{-1} (p)\) is connected in this case. 

To conclude we need to show that if the degree of \(p\) is two there are no branch points of the branched double cover.  It suffices to establish that there is no meromorphic differential with two simple poles on a genus two curve which is at the same time of degree two and odd with respect to the hyperelliptic involution. Now, given a meromorphic form \( \omega\) of degree two, the function \( \exp \left(2i\pi \int \omega\right) \) is the unique hyperelliptic function up to post-composition, hence \(\omega \) is even with respect to the hyperelliptic involution in this case, not odd.  

\end{proof}

\section{Proof of Theorem \ref{t:disconnected fibers}}

Given a finite subset \( E\subset \Pone \), the homology group \( H_1 (\Pone \setminus E, \Z/2\Z) \) is naturally isomorphic to \( M_E = \frac{\bigoplus _{e\in E} \Z/2\Z e }{ \Z/2\Z \sum _{e\in E} e }\). In the sequel, we will consider subsets of the Riemann sphere that contain the points \( 0\) and \(\infty\) of cardinality \( 2g+2\). We will fix once for all such a subset, that we denote \(E_0\). Observe that any bijection \(\sigma\) from \(E\) to \( E_0\) that map the points \(0\) and \(\infty\) to themselves, induces a linear map \(\varphi_{\sigma} : M_E\rightarrow M_{E_0}\). Two such linear maps differ by post-composition by a map of the form \(\varphi _{\sigma}\), where \( \sigma\) is a permutation of \( E_0\) that fixes \(0\) and \(\infty\). Given a \(\mathbb Z/2\mathbb Z\)-module \(M\), we say that two morphisms \( f: M \rightarrow M_{E_0}\) are equivalent if they differ by post-composition by a morphism of the form \( \varphi_{\sigma}\) where \( \sigma\) is a permutation of \(E_0\) that fixes the points \(0\) and \(\infty\). The set of equivalence classes is denoted \( S_{2g} \backslash \text{Hom} (M, M_{E_0})\). 

\begin{definition}
 Given a marked meromorphic form $(C,m,\omega)$ of degree two on a smooth curve we define its Arnold invariant \(\text{Ar}= \text{Ar}(C,m,\omega)\) as the class \([A]\) in \( S_{2g} \backslash \text{Hom} (H_1(\Sigma_g, \mathbb Z/2\mathbb Z), M_{E_0})\) of the homomorphism \(A= \varphi_\sigma \circ f_* \circ m\), where  $$f_*:H_1(C,\mathbb{Z}/2\mathbb{Z})\rightarrow H_1 (\Pone \setminus\text{VC}(f),\mathbb{Z}/2\mathbb{Z})$$ 
 is the well defined homomorphism induced in the $\mathbb{Z}/2\mathbb{Z}$-homology of $C$ by the restriction of the branched double covering $f=\exp(2\pi i\int\omega):C\rightarrow \Pone$ to the complement of the ($2g+2$ simple) branch points (see \cite{Arnold}). In this formula, \( \text{VC} (f)\) is the  set of critical values of the covering \(f\), 
and \( \sigma\) is a bijection from \( \text{VC} (f) \) to \(E_0\) mapping \(0\) to \(0\) and \(\infty\) to \(\infty\).
\end{definition} 

For each degree two homomorphism \(p\in H^1(\Sigma_g,\mathbb{C}/\mathbb{Z})\) the Arnold invariant is \textit{locally} constant on $\per^{-1}(p)$. 
On the other hand, the fact that $\ker(p)$ has rank $2g-1$, implies that there is a non-trivial stabilizer $\text{Stab}(p)$ of $p$ in $\text{Sp}(H_1(\Sigma_g,\mathbb{Z}))$. 
This stabilizer acts on $\per^{-1}(p)$ by pre-composition on the marking. The idea of the proof of Theorem \ref{t:disconnected fibers} is to show that in the orbit of a point in $\per^{-1}(p)$ there are at least two values of the Arnold invariant, and therefore $\per^{-1}(p)$ has at least as many components as the number of values that the Arnold invariant takes. 

Note that we can recover the period \( p\in H^1 (\Sigma_g, \frac{1}{2} \Z / \Z)\simeq H^1 (\Sigma_g, \Z /2\Z )\) from the Arnold's invariant of a marked form \( (C,m, \omega) \) of degree two. Indeed, the group \( M_{\{0,\infty\}} \) is naturally isomorphic to \( \Z/ 2\Z\) (such a module have no non trivial automorphisms); the period \(p\)  is just the composition of a representative \(A\) of \( \text{Ar}(C,m,\omega)\) with the projection \( M_{E_0}\rightarrow M_{\{0,\infty\}}\simeq M_{E_0} / \bigoplus_{e\in E_0\setminus \{0,\infty\}} \Z/2\Z e\). In particular, the stabilizer of the Arnold's invariant of an element \( (C,m,\omega)\) of degree two in the symplectic group \(\text{Sp} (H_1(\Sigma_g, \Z/2\Z))\) is contained in the stabilizer of the period \(p=\text{Per}(C,m,\omega)\).  

\begin{lemma}\label{l:counting finite groups}
For any $g\geq 2$, given any \( (C,m,\omega)\in\Torelliformg\) of degree two,  the stabilizer in the group $\text{Sp}(H_1(\Sigma_g,\mathbb{Z}/2\mathbb{Z}))$ of the reduction modulo two of the period homomorphism \( p= \text{Per} (C,m,\omega)\)   is larger than the stabilizer of the Arnold class \(\text{Ar} (C,m,\omega)\).
\end{lemma}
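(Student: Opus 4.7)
The plan is to exhibit an explicit symplectic transvection of $V=H_1(\Sigma_g,\Z/2\Z)$ that lies in $\text{Stab}(\bar p)$ but not in $\text{Stab}(\text{Ar})$, where $\bar p$ denotes the reduction mod $2$ of the period and $\text{Ar}=\text{Ar}(C,m,\omega)$. The guiding intuition is that $\bar p$ only records the parity of intersections with $\{0,\infty\}$, while $\text{Ar}$ records the labelling of all $2g+2$ branch points up to permutations fixing $\{0,\infty\}$ pointwise; a symmetry that genuinely mixes $\{0,\infty\}$ with the other branch points therefore has a chance to preserve $\bar p$ while destroying $\text{Ar}$.

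The first step is to set up the hyperelliptic dictionary. Since $(C,\omega)$ has degree two, the cover $f\colon C\to\Pone$ is hyperelliptic with branch locus $E_0=\text{VC}(f)$ of cardinality $2g+2$, and the poles $x_\pm$ of $\omega$ are precisely the Weierstrass points lying over $0$ and $\infty$. Via the marking $m$, one obtains a symplectic identification
\[ V \;\cong\; H_1(C,\Z/2\Z) \;\cong\; J(C)[2] \;\cong\; \{S\subset E_0 : |S|\text{ even}\}/(S\sim E_0\setminus S),\]
with symplectic form $\langle S,T\rangle = |S\cap T|\bmod 2$ (a form of the Weil pairing). Under this dictionary, the Arnold homomorphism $A$ becomes the natural injection $S\mapsto \sum_{e\in S}e\in M_{E_0}$ (whose image is the even-parity hyperplane of $M_{E_0}$), and $\bar p(S)=|S\cap\{0,\infty\}|\bmod 2$.

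The second step is to choose $v\in V$ corresponding to the subset $\{0,\infty\}$, which is nonzero (since this subset is neither $\emptyset$ nor $E_0$) and satisfies $\bar p(v)=0$, and to consider the symplectic transvection $T_v(w)=w+\langle v,w\rangle v$. The identity $\bar p\circ T_v=\bar p$ is immediate from $\bar p(v)=0$, so $T_v\in\text{Stab}(\bar p)$. To show $T_v\notin\text{Stab}(\text{Ar})$, I would assume $A\circ T_v=\varphi_\sigma\circ A$ for some permutation $\sigma$ of $E_0$ fixing $0$ and $\infty$ individually, and test this identity on $w=[\{0,a\}]$ for any $a\in E_0\setminus\{0,\infty\}$ (for which $\langle v,w\rangle=1$). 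A direct computation in $M_{E_0}$ then forces
\[ \sigma(\{0,a\})\,\triangle\,\{0,a\} \;\in\; \bigl\{\{0,\infty\},\; E_0\setminus\{0,\infty\}\bigr\}.\]
The first option is impossible because $\sigma(0)=0$ gives $0\in\sigma(\{0,a\})\cap\{0,a\}$, so $0\notin\sigma(\{0,a\})\triangle\{0,a\}$. The same observation bounds $|\sigma(\{0,a\})\triangle\{0,a\}|\leq 2$, whereas $|E_0\setminus\{0,\infty\}|=2g\geq 4$ by the hypothesis $g\geq 2$; this rules out the second option and yields the desired contradiction.

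The main delicate point is the first step --- specifically verifying that the pushforward $f_*\colon H_1(C,\Z/2\Z)\to M_{E_0}$ really does coincide, under the subset parametrization of $J(C)[2]$, with the inclusion $S\mapsto\sum_{e\in S}e$, and that the intersection form on $V$ matches $|S\cap T|\bmod 2$. These are classical consequences of the hyperelliptic structure: a $1$-cycle in $C$ joining two Weierstrass points $W_i,W_j$ through the two sheets of $f$ projects in $\Pone\setminus E_0$ to a loop encircling precisely $e_i$ and $e_j$ (hence to $e_i+e_j\in M_{E_0}$), and matching of the symplectic forms reduces to the standard Weil-pairing computation on such cycles. Once this dictionary is in place, the remainder of the argument is an elementary finite combinatorial check.
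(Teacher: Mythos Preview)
Your proof is correct and takes a genuinely different route from the paper. The paper argues by counting: it computes the order of $\text{Stab}(\bar p)$ in $\text{Sp}(H_1(\Sigma_g,\Z/2\Z))$ as the number of symplectic bases whose first vector is $\bar p$, namely $2^{2g-1}(2^{2g-2}-1)\cdots(2^2-1)\cdot 2$, and then bounds $|\text{Stab}(\text{Ar})|\le (2g)!$ by constructing an injective homomorphism $\text{Stab}(\text{Ar})\hookrightarrow S_{2g}$ (using that $A$ is injective with image $M_{E_0}^{\text{even}}$, so the permutation $\sigma$ in $A\circ M^{-1}=\varphi_\sigma\circ A$ is uniquely determined by $M$). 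The strict inequality of stabilizers is then a numerical comparison, left implicit.

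Your approach instead exhibits an explicit witness: the symplectic transvection $T_v$ along $v=[\{0,\infty\}]$. This is more constructive and arguably more illuminating, since it pinpoints exactly which symmetry of $V$ preserves the parity datum $\bar p$ while genuinely mixing $\{0,\infty\}$ with the finite branch points at the level of $\text{Ar}$. The trade-off is that you must invoke the hyperelliptic dictionary (the identification of $H_1(C,\Z/2\Z)$ with even subsets of $E_0$ modulo complementation, with pairing $|S\cap T|\bmod 2$, and the compatibility with $f_*$) more explicitly than the paper does; the paper only needs the qualitative facts that $A$ is injective with image $M_{E_0}^{\text{even}}$ and that $\bar p$ factors through the projection $M_{E_0}\to M_{\{0,\infty\}}$. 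Both arguments ultimately rest on the same structural input, but yours uses it in a sharper, element-by-element way, while the paper's counting argument is coarser but requires less unpacking of the dictionary.
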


\begin{proof}
Since \(p\neq 0\) mod two, the stabilizer of the reduction modulo two of \(p\) in  \(\text{Sp} (H_1(\Sigma_g, \Z/2\Z))\) is in bijection of the set of symplectic basis of \(H^1 (\Sigma_g, \Z/2\Z) \) that contain the element \(p\) as the first member. The number of such basis is the product
\[ 2^{2g-1} (2^{2g-2} - 1) 2^{2g-3} \times \ldots \times (2^2- 1 ) \times 2 . \]

Let us now bound from below the number of elements in the stabilizer of the Arnold's class \( \text{Ar} = \text{Ar} (C, m, \omega) \), representated by a map \( A : H_1 (\Sigma_g, \Z/2\Z) \rightarrow M_{E_0}\). 

Observe that the map \( A\) is injective, and that its image is the subset \( M_{E_0}^{even} \subset M_{E_0}\) consisting of formal sums \( \sum _{e\in F} e\), where \(F\) is a subset of \(E_0\) with an even number of elements.

Given any element \( M\in \text{Sp} (2g, \Z/2\Z) \) which fixes \( \text{Ar} \), there exists a permutation \(\sigma \) of the set \(E\) such that \begin{equation} \label{eq: equivariance} A\circ M^{-1} = \varphi_\sigma \circ A .\end{equation}
Since \(M_{E_0}\) has more than two elements, the action of \( \varphi_{\sigma}\) on \(M_{E_0}^{even}\) completely determines \(\sigma\), the permutation \( \sigma\) satisfying \eqref{eq: equivariance} is well-defined, and the induced map  
\begin{equation} \label{eq: morphism} M\in \text{Stab} _{\text{Sp} (H_1(\Sigma_g, \Z/2\Z) } (\text{Ar})\mapsto \sigma \in  S_{2g} \end{equation}
is a homomorphism. Now, because the map \(A\) is injective, the morphism \eqref{eq: morphism} is injective. We thus get that stabilizer of \( \text{Ar}\) in \(\text{Sp} (H_1(\Sigma_g, \Z/2\Z))\) has at most \((2g) !\) elements. The lemma follows.
\end{proof}

\begin{corollary}\label{c: several Arnold invariants}
If $g\geq 2$ and $(C,m,\omega)\in\Torelliformg$ has degree two period homomorphism $p=\per(C,m,\omega)$,  there exist at least two values of the Arnold invariant in the orbit $$\text{Stab}(p)\cdot(C,m,\omega)\subset \per^{-1}(p).$$  
\end{corollary}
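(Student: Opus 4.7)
The plan is to combine Lemma \ref{l:counting finite groups} with the standard fact that the reduction modulo two map \(\mathrm{Sp}(H_1(\Sigma_g,\Z))\twoheadrightarrow \mathrm{Sp}(H_1(\Sigma_g,\Z/2\Z))\) is surjective. First I would observe that a degree two period \(p\) has image in \(\frac12\Z/\Z\subset\C/\Z\), and so is entirely encoded by a homomorphism \(\bar p\in H^1(\Sigma_g,\Z/2\Z)\). Consequently an element \(\varphi\in\mathrm{Sp}(H_1(\Sigma_g,\Z))\) satisfies \(p\circ\varphi^{-1}=p\) if and only if its mod-two reduction \(\bar\varphi\) satisfies \(\bar p\circ\bar\varphi^{-1}=\bar p\); together with the surjectivity just mentioned this yields a surjection
\[
\mathrm{Stab}_{\mathrm{Sp}(H_1(\Sigma_g,\Z))}(p)\twoheadrightarrow\mathrm{Stab}_{\mathrm{Sp}(H_1(\Sigma_g,\Z/2\Z))}(\bar p).
\]

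Next I would spell out how \(\mathrm{Stab}(p)\) acts on Arnold invariants. Since the Arnold invariant is a class of homomorphisms from \(H_1(\Sigma_g,\Z/2\Z)\) to \(M_{E_0}\), the action of \(\varphi\in\mathrm{Stab}(p)\) on a representative \(A\) of \(\mathrm{Ar}(C,m,\omega)\) is simply \(A\mapsto A\circ\bar\varphi^{-1}\), i.e.\ it factors through the mod-two quotient. Lemma \ref{l:counting finite groups} produces an element \(\bar\varphi_0\in\mathrm{Stab}_{\mathrm{Sp}(H_1(\Sigma_g,\Z/2\Z))}(\bar p)\) that does \emph{not} lie in the stabilizer of the Arnold class \(\mathrm{Ar}(C,m,\omega)\). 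Lifting it to \(\varphi_0\in\mathrm{Stab}(p)\) through the surjection above, we obtain \(\varphi_0\cdot(C,m,\omega)\in\per^{-1}(p)\) whose Arnold invariant \([A\circ\bar\varphi_0^{-1}]\) differs from \([A]\), providing the two desired values in the orbit.

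The only potential subtlety is the surjectivity \(\mathrm{Sp}(2g,\Z)\twoheadrightarrow\mathrm{Sp}(2g,\Z/2\Z)\), which I would cite as classical (it follows at once from the fact that \(\mathrm{Sp}(2g,\Z)\) is generated by symplectic transvections, whose mod-two reductions generate the finite symplectic group). Everything else is a direct translation of Lemma \ref{l:counting finite groups} from the level of \(\mathrm{Sp}(H_1(\Sigma_g,\Z/2\Z))\) back to integer symplectic matrices acting on markings, so no further obstacle is expected.
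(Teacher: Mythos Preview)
Your proof is correct and follows essentially the same approach as the paper: invoke the surjectivity of the reduction map \(\mathrm{Sp}(2g,\Z)\twoheadrightarrow\mathrm{Sp}(2g,\Z/2\Z)\), then apply Lemma~\ref{l:counting finite groups} to find an element in the mod-two stabilizer of \(p\) that moves the Arnold class. You have simply spelled out in more detail the passage from integer to mod-two coefficients (in particular, why \(\mathrm{Stab}(p)\) surjects onto the mod-two stabilizer of \(\bar p\) and why the action on Arnold invariants factors through mod two), which the paper leaves implicit.
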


\begin{proof}
The reduction modulo two map \(\text{Sp} (H_1 (\Sigma_g, \Z) )\rightarrow \text{Sp} (H_1 (\Sigma_g, \Z/2\Z) ) \) is  surjective. From Lemma \ref{l:counting finite groups} we deduce that there exist two elements in the orbit that are not equivalent by post-composition by an element in $\text{Sym}_{2g}$. 
\end{proof}

Theorem \ref{t:disconnected fibers} follows from Corollary \ref{c: several Arnold invariants}. The latter also implies that the branched double cover of Theorem \ref{t: ramified covering genus two} is not only unbranched,  as stated in Corollary \ref{c: connectedness genus two}, but also disconnected.

\section{Extension of the period map and bordification}
\label{s:extension of the period map and bordification}

\subsection{Augmented Teichm\" uller space}
\label{ss:augmented teich definitions}
As in  \cite{CDF2}, we need to extend the domain of definition of the period map in to appropriate bordifications of the space $\Torelliformg$. For the sake of completeness we have included a general theory of the period map for moduli spaces of forms with simple poles and their extension to the augmented Torelli spaces in the Appendix. 

In this section we give a brief outline and focus on some particular features of the case of two poles. 

The moduli space of genus $g$ smooth curves with $n$ marked points $\mathcal{M}_{g,n}$ admits a Deligne-Mumford-Knudsen compactification $\overline{\mathcal{M}}_{g,n}$ as a complex orbifold (see \cite{DM,K1,K2,KM}). Each point in the boundary $\partial\mathcal{M}_{g,n}=\overline{\mathcal{M}}_{g,n}\setminus \mathcal{M}_{g,n}$  corresponds to a class under conformal isomorphism of the following geometric object:
\begin{definition}
  A connected complex curve $C$ with $n$ marked distinct points $q_1,\ldots,q_n\in C$ is said to be stable if its singularities are nodes that do not coincide with any of the marked points,  and the closure $C_i$ of each component of $C^*:=C\setminus\text{Sing}(C)$, called a part of $C$, has a group of automorphisms that fix the marked points and the boundary points that is finite. The normalization of $C$ is the (possibly disconnected) smooth curve $\hat{C}=\sqcup C_i$ marked by the points $q_1,\ldots, q_n$ and the pairs of points corresponding to the nodes of $C$. A stable curve $C$ is said of compact type if every node separates $C$ in two components. Otherwise $C$ is said to be of non-compact type.
\end{definition}

The arithmetic genus of a stable curve is $g=h^1(C,\mathcal{O})$. When $C$ has $\delta$ nodes and its normalization has $\nu$ components of genera $g_1,\ldots, g_{\nu}$, the arithmetic genus satisfies (see \cite{Harris}[p. 48]) $$g=\sum_{i=1}^\nu (g_i-1)+\delta+1$$

The boundary $\partial\mathcal{M}_{g,n}$ is a normal crossing divisor and, as such, is stratified. Its regular points correspond to the subset of curves with a single node. 

There are natural attaching maps \begin{equation}
\label{eq:attach1}    \overline{\mathcal{M}}_{g_1,n_1\cup *}\times\overline{\mathcal{M}}_{g_2,n_2\cup\star}\rightarrow \overline{\mathcal{M}}_{n_1+n_2}
\end{equation} and 
\begin{equation}\label{eq:attach2}
\mathcal{M}_{g,n\cup\{*,\star\}}\rightarrow \overline{\mathcal{M}}_{g+1,n}
\end{equation}
where the special points are identified into a new node. Their image is therefore in the boundary.  

There are also forgetful maps (forget last marked point with stabilization of the resulting curve, i.e. collapse non-stable components) 
\begin{equation}
    \overline{\mathcal{M}}_{g,n\cup{*}}\rightarrow\overline{\mathcal{M}}_{g,n} 
\end{equation}
All of the said maps are holomorphic. By convention we omit the zero subindex when $n=0$ and write $\mathcal{M}_g$ and $\overline{\mathcal{M}}_g$ instead. 

Recall from the introduction that the universal cover  of $\mathcal{M}_{g,n}$ can be identified with the Teichm\" uller space $\mathcal{T}_{g,n}$ and its covering group with the mapping class group $\text{Mod}(\Sigma_{g,n})$ for a reference surface  $\Sigma_{g,n}=(\Sigma_g,p_1,\ldots,p_n)$ of genus $g$ with $n$ ordered marked points that we will sometimes denote as $P=(p_1,\ldots,p_n)$. When $n=0$ we omit the subindex and write $\Sigma_g=\Sigma_{g,0}$. 

The Teichm\"uller space can be bordified to the so called augmented Teichm\" uller space $\overline{\mathcal{T}}_{g,n}$, a stratified topological space that is no longer a manifold but where the action of $\text{Mod}(\Sigma_{g,n})$ extends naturally with quotient $\overline{\mathcal{T}}_{g,n}/\text{Mod}(\Sigma_{g,n})$ homeomorphic to $\overline{\mathcal{M}}_{g,n}$ (see \cite{ACG}). The quotient map $$\overline{\mathcal{T}}_{g,n}\rightarrow \overline{\mathcal{T}}_{g,n}/\text{Mod}(\Sigma_{g,n})$$ is a branched cover, branching over the boundary points. 
Let us recall its definition and some properties.
\begin{definition}
 A homotopical marking (or sometimes a collapse) of a connected genus $g$ stable curve $C$ with $n$ ordered pairwise distinct ordered marked points $Q=(q_1,\ldots, q_n)\in (C^*)^n$ is a continuous surjection $f:\Sigma_{g,n}\rightarrow (C, q_1,\ldots,q_n)$ such that $f(p_i)=q_i$, the preimage of each node is a simple closed curve on $\Sigma_g\setminus P$ and on each component of $\Sigma_g\setminus f^{-1}(N)$ where $N$ is the set of nodes, the map $f$ is a homeomorphism onto a part of $C$ that preserves the orientation. 
\end{definition}

\begin{definition}
 A homotopically marked stable curve with $n$ marked points is a marked stable curve $(C, q_1,\ldots,q_n)$ together with a homotopical marking $f:(\Sigma_{g},P)\rightarrow (C,Q)$. Two homotopically marked stable curves $f_i:(\Sigma_g,P)\rightarrow (C_i,Q_i)$ for $i=1,2$ are said to be equivalent if there exists a conformal isomorphism $g:C_1\rightarrow C_2$ such that $g\circ f_1$ is homotopic to $f_2$ relative to $P$. The class of a $\Sigma_{g,n}$ marked stable curve will be denoted by $\{f:\Sigma_{g,n}\rightarrow (C, q_1,\ldots, q_n)\}$ or by $(C,Q,\{f\})$. 
\end{definition}

When $C$ is non-singular the homotopy class of a collapse corresponds to a unique isotopy class of homeomorphisms. 

\begin{remark}\label{rem:homotopically deform to dehn}
If $\Delta:\Sigma_{g,n}\rightarrow\Sigma_{g,n}$ is a Dehn twist around a simple closed curve  in $\Sigma_g$ that is collapsed by the marking $f:(\Sigma_g,P)\rightarrow (C,Q)$ to a point, then $(C,Q)$ marked by $f\circ\Delta$ is equivalent to the same curve marked by $f_1$. 
\end{remark}

\begin{definition}
The augmented Teichm\" uller space $\overline{\mathcal{T}}_{g,n}$ is the set of all homotopically marked stable genus $g$ curves with $n$ marked points  up to equivalence. It contains the Teichm\" uller space $\mathcal{T}_{g,n}$ of isotopy classes of marked \textit{smooth} curves, and the boundary $\partial\mathcal{T}_{g,n}=\overline{\mathcal{T}}_{g,n}\setminus \mathcal{T}_{g,n}$. 
\end{definition}
The topology of Teichm\" uller space extends to a topology on Augmented Teichm\" uller space in such a way that around a boundary point it is not locally compact. (see \cite{ACG} and references therein). There is a natural stratification of the boundary by complex manifolds. Each stratum is defined by the number of nodes of the curves and that number gives precisely the codimension of the manifold with respect to the dimension of the open stratum $\mathcal{T}_{g,n}$.  

The local structure of the stratification is described in \cite{ACG}. It is  is a local abelian ramified cover of a normal crossing divisor (we borrow the terminology from in  Section 4 of \cite{CDF2}). This property allows to mimic the description of the dual complex associated to a normal crossing divisor as follows: a vertex is associated to each connected component of the codimension one stratum. Each connected component of the stratum of codimension $k$ lies in the closure of precisely $k$ connected components of the codimension one stratum.  We attach a  $k$-simplex   to $k$ given vertices for each connected component of the intersection of the closures of the $k$ corresponding components of strata.  The resulting complex is denoted $\mathcal{C}(\overline{\mathcal{T}}_{g,n})$. It is isomorphic to the curve complex $\mathcal{C}_{g,n}$ defined on a genus $g$ closed surface with $n$ marked points. (see \cite{CDF2}).

\subsection{Homology quotients}

In this paper we will need to work on some intermediate quotients of $\overline{\mathcal{T}}_{g,2}$. 
 associated to particular subgroups of $\text{Mod}(\Sigma_{g,2})$ characterized by their (trivial) action on certain homology groups with integral coefficients. 
 
 The pointed Torelli group $\mathcal{I}(\Sigma_{g,n^*})$ defined by the exact sequence \begin{equation}1\rightarrow \mathcal{I}(\Sigma_{g,n^*})\rightarrow \text{Mod}(\Sigma_{g,n})\rightarrow \text{Aut}\big(H_1(\Sigma_g\setminus \{p_1,\ldots,p_n\})\big)\end{equation}  
and the classical Torelli group represented in $\text{Mod}(\Sigma_{g,n})$ by the exact sequence
\begin{equation}1\rightarrow \mathcal{I}(\Sigma_{g,\text{\sout{n}}})\rightarrow \text{Mod}(\Sigma_{g,n})\rightarrow \text{Aut}\big(H_1(\Sigma_g)\big).\end{equation} 

Both groups are related. Denote $\Pi_n$ the rank $n-1$ submodule generated by the $n$ peripheral curves around the punctures in $H_1(\Sigma_g\setminus\{p_1,\ldots,p_n\})$ we have an exact sequence \begin{equation}\label{eq:peripheral ex seq}0\rightarrow\Pi_n\rightarrow H_1(\Sigma_g\setminus\{p_1,\ldots,p_n\}) 
\rightarrow \frac{H_1(\Sigma_g\setminus\{q_1,\ldots,q_n\})}{\Pi_n}\cong H_1(\Sigma_g)\rightarrow 0\end{equation}

The group $\text{Mod}(\Sigma_{g,n})$ acts on \eqref{eq:peripheral ex seq} preserving it, since it fixes every class in $\Pi_n$. 
In particular, $\mathcal{I}(\Sigma_{g,n^*})\subset\mathcal{I}(\Sigma_{g,\text{\sout{n}}})$ is a subgroup. We claim that there is an isormophism $$\frac{\mathcal{I}(\Sigma_{g,\text{\sout{n}}})}{\mathcal{I}(\Sigma_{g,n^*})}\cong\text{Hom}(H_1(\Sigma_g),\Pi_n)$$
Indeed, take $\phi\in\mathcal{I}(\Sigma_{g,\text{\sout{n}}})$. The endomorphism defined by $\phi_*-\text{Id}$ on  $H_1(\Sigma_g\setminus\{p_1,\ldots,p_n\})$ has image in $\Pi_n$ and $\Pi_n$ in its kernel. Thanks to the isomorphism in \eqref{eq:peripheral ex seq} it defines an element in $\text{Hom}(H_1(\Sigma_g),\Pi_n)$.  The kernel of the group homomorphism $\phi\mapsto \phi_*-\text{Id}$ is precisely $\mathcal{I}(\Sigma_{g,n^*})$. 

The previous analysis implies that the quotient map \begin{equation}
    \label{eq:special cover}
\frac{\mathcal{T}_{g,n}}{\mathcal{I}(\Sigma_{g,n^*})}\rightarrow \frac{\mathcal{T}_{g,n}}{\mathcal{I}(\Sigma_{g,\text{\sout{n}}})}\end{equation} has covering group $\text{Hom}(H_1(\Sigma_g),\Pi_n)$.

Define $$\overline{\mathcal{S}}_{g,n^*}=\overline{\mathcal{S}}(\Sigma_{g,n^*})=\frac{\overline{\mathcal{T}}_{g,n}}{\mathcal{I}(\Sigma_{g,n^*})}\text{ and }\overline{\mathcal{S}}_{g,\text{\sout{n}}}=\frac{\overline{\mathcal{T}}_{g,n}}{\mathcal{I}(\Sigma_{g,\text{\sout{n}}})}$$ Then the quotient map \eqref{eq:special cover} extends to a map \begin{equation}\label{eq:extension of cover} \overline{\mathcal{S}}_{g,n^*}\rightarrow \overline{\mathcal{S}}_{g,\text{\sout{n}}}\end{equation}
that has branch points possibly at points with non-separating nodes. An example of branched point with $(g,n)=(4,2)$ is depicted in Figure \ref{fig:nodal branch point}.

\begin{figure}[h]
     \centering
     \includegraphics[height=4cm]{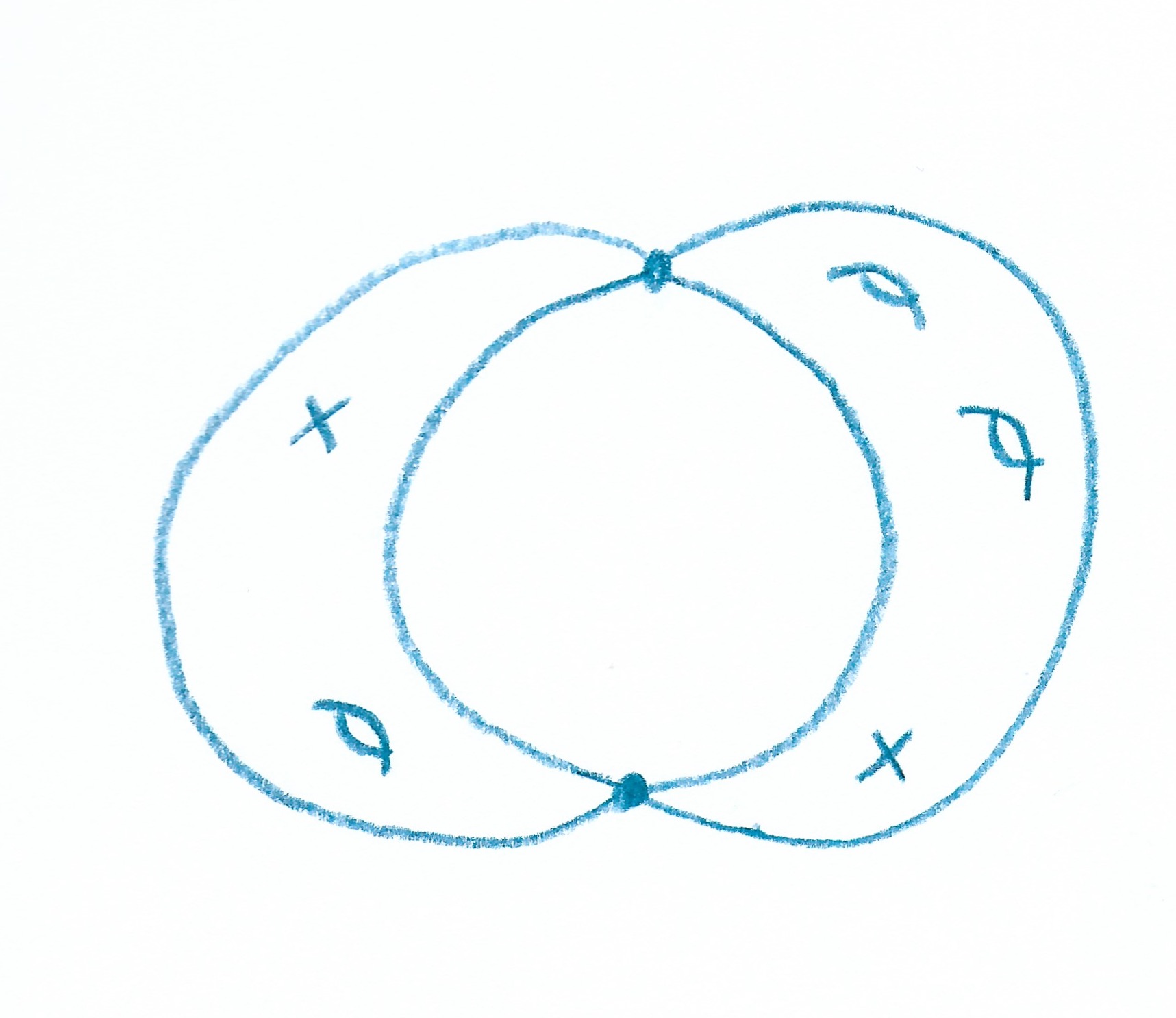}
     \caption{An example of branch point of the extension \eqref{eq:extension of cover} with $(g,n)=(4,2)$. The Dehn twist around the two curves of the curve system acts trivially on $\overline{\mathcal{S}}_{4,\text{\sout{2}}}$ and the depicted boundary point of $\mathcal{S}_{4,2^*}$ but non-trivially in the neighbouring points outside the boundary }
     \label{fig:nodal branch point}
 \end{figure}
 
 \subsection{Homological characterization of strata of curves of compact type}
The stratification by the number of nodes of $\overline{\mathcal{T}}_{g,n}$ is invariant under the action of the group $\mathcal{I}(\Sigma_{g,n^*})$ (resp.  $\mathcal{I}(\Sigma_{g,\text{\sout{n}}})$ ) and induces a stratification of  $\overline{\mathcal{S}}_{g,n^*}$ (resp.  $\overline{\mathcal{S}}_{g,\text{\sout{n}}}$). Both stratifications are still locally abelian ramified covers of a normal crossing divisor. Therefore we can define the dual boundary complex $\mathcal{C}(\overline{\mathcal{S}}_{g,n^*})$ ( resp.  $\mathcal{C}(\overline{\mathcal{S}}_{g,\text{\sout{n}}})$ ) which is isomorphic to the quotient of the curve complex $\mathcal{C}_{g,n}/ \mathcal{I}(\Sigma_{g,n^*})$ (resp. $\mathcal{C}_{g,n}/ \mathcal{I}(\Sigma_{g,\text{\sout{n}}})$).

To every class of point $(C,Q,\{f\}) \in\overline{\mathcal{S}}_{g,\text{\sout{n}}}$ there corresponds a surjective homomorphism $$m=[f]_*:H_1(\Sigma_g,\mathbb{Z})\rightarrow H_1(C,\mathbb{Z}).$$

\begin{lemma}\label{l:homological characterization}
If $(C,Q,\{f\})$ and $(C,Q,\{f'\})$ satisfy 
\begin{itemize}
    \item $[f]_*=[f']_*$
    \item $\text{rank}(\ker [f]_*)\leq 1$,
\end{itemize}
then both points define the same class in $\overline{\mathcal{S}}_{g,\text{\sout{n}}}$.
\end{lemma}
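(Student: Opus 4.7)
The goal is to exhibit a biholomorphism $g: C \to C$ and $\phi \in \mathcal{I}(\Sigma_{g,\text{\sout{n}}})$ with $g \circ f \circ \phi$ homotopic to $f'$ relative to $P$. My plan is to first try $g = \mathrm{id}_C$ and then build $\phi$ in two stages, matching first the multicurves collapsed to the nodes and then the markings of the resulting smooth pieces. Since any collapse map is surjective on first homology, the rank of $\ker [f]_*$ equals $b_1$ of the dual graph of $C$, so the hypothesis forces this dual graph to be either a tree (all nodes separating) or to have exactly one cycle (a unique non-separating node, all others separating).

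Let $\gamma := f^{-1}(N)$ and $\gamma' := (f')^{-1}(N)$ be the two disjoint families of simple closed curves in $\Sigma_g$ collapsed to the nodes $N$ of $C$. Both are indexed bijectively by $N$, with each component being separating or non-separating in $\Sigma_g$ according as the corresponding node is separating or non-separating in $C$. The hypothesis $[f]_*=[f']_*$, combined with the pointwise constraints $f(p_i)=f'(p_i)=q_i$, forces $\gamma$ and $\gamma'$ to induce the same topological decomposition of $\Sigma_g$ (same dual graph, same distribution of the $p_i$'s across pieces), and in the rank-one case to assign the same primitive homology class to the unique non-separating component. I would then invoke classical transitivity results for the pointed Torelli group: it acts transitively on isotopy classes of separating simple closed curves with prescribed topological type relative to the marked points, and on non-separating simple closed curves with prescribed primitive homology class. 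This produces $\phi_1 \in \mathcal{I}(\Sigma_{g,\text{\sout{n}}})$ sending $\gamma$ to $\gamma'$ as isotopy classes in the correct bijection with $N$.

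After replacing $f$ by $f\circ\phi_1$, we may assume $\gamma=\gamma'$. The maps $f$ and $f'$ now descend to markings $\widehat{f},\widehat{f}'$ of the normalization $\tilde{C}$ by the cut surface $\Sigma_g\setminus\gamma$, whose components are smooth surfaces carrying the original marks and additional boundary points coming from $\gamma$. On each smooth component these two markings induce the same map on $H_1$ of the closed surface (by the hypothesis $[f]_*=[f']_*$ and the decomposition of $H_1(\Sigma_g)$ induced by $\gamma$), so by the smooth case they differ by an element of the pointed Torelli group of that component. Piecing these componentwise Torelli elements together, and allowing Dehn twists around components of $\gamma$ (which act trivially on $\overline{\mathcal{T}}_{g,n}$ by Remark \ref{rem:homotopically deform to dehn}), yields the desired global $\phi\in\mathcal{I}(\Sigma_{g,\text{\sout{n}}})$ with $f\circ\phi\simeq f'$ rel $P$.

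The main obstacle I anticipate is the rank-one case: I must invoke a transitivity statement for non-separating simple closed curves of fixed primitive homology class in the pointed Torelli group, and then verify that the resulting $\mathcal{I}$-element correctly identifies the two branches of the non-separating node in $C$ (in particular, that the induced bijection between the two ends of $\gamma$ in $\Sigma_g\setminus\gamma$ and the two preimages of the node in $\tilde{C}$ is the correct one). The rank-at-most-one hypothesis is essential: with higher rank of $\ker[f]_*$, two multicurves inducing the same homology map can have genuinely different topological types of decomposition, and no single mapping class in $\mathcal{I}$ can reconcile them.
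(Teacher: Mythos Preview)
Your approach can likely be made to work but takes a much longer route than the paper's and leaves open precisely the gaps you flag. The paper's argument is strikingly short: since $(C,Q,\{f\})$ and $(C,Q,\{f'\})$ project to the same point of $\overline{\mathcal{M}}_{g,n}$, there already exists $\phi\in\text{Mod}(\Sigma_{g,n})$ with $f'\simeq f\circ\phi$, and the task reduces to showing that this particular $\phi$ lies in $\mathcal{I}(\Sigma_{g,\text{\sout{n}}})$ up to Dehn twists along collapsed curves. From $[f]_*\circ\phi_*=[f']_*=[f]_*$ one gets $(\phi_*-\mathrm{Id})(H_1(\Sigma_g))\subset\ker[f]_*$. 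If the kernel is trivial, $\phi\in\mathcal{I}$ immediately. If it has rank one, generated by the class $[c]$ of the unique non-separating collapsed curve, write $\phi_*(a)=a+\varphi(a)[c]$; the symplectic constraint $\phi_*(a)\cdot\phi_*(b)=a\cdot b$ forces $\varphi(a)=k\,(a\cdot[c])$ for a fixed $k\in\mathbb{Z}$, which is exactly the homological action of $\Delta_c^{k}$. Since $\Delta_c$ fixes the point $(C,Q,\{f\})$ by Remark~\ref{rem:homotopically deform to dehn}, one concludes $\phi\circ\Delta_c^{-k}\in\mathcal{I}$.

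Your plan, by contrast, needs (i) transitivity of the pointed Torelli group on entire multicurve systems of a prescribed combinatorial type, not just on single curves, which is not a one-line citation; (ii) a gluing argument for the piecewise Torelli elements across the components of $\gamma$; and (iii) resolution of the branch ambiguity at the non-separating node. Point (iii) is exactly what the symplectic computation above settles for free: it forces $\phi_*([c])=[c]$ rather than $-[c]$, so no branch swap can occur. The key simplification you miss is to start from the $\phi$ handed to you by the moduli-space projection rather than constructing one from scratch; this turns a geometric matching problem into three lines of linear algebra.
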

\begin{proof}
There exists $\phi\in\text{Mod}(\Sigma_{g,n})$ such that $f'=f\circ \phi$. By hypothesis $[f\circ\phi]_*=[f']_*=[f]_*$. The endomorphism of $H_1(\Sigma_g)$ defined by  $\phi_*-\text{Id}$ has its image in $\ker[f]_*$. If $\ker [f]_*=0$  we are done. 

If $\ker[f]_*$ has rank one, it is a primitive module, generated by the class $[c]$ of a non-separating simple closed curve $c$ and $$\phi-\text{Id}= [c]\varphi \quad \text{where }   \varphi\in H^1(\Sigma_g,\mathbb{Z})$$
By duality $\varphi$ is the symplectic dual of some element of $\ker[f]_*\subset H_1(\Sigma_g)$, hence there exist $k\in\mathbb{Z}$ such that  $$\phi(a)=a+k (a\cdot c)[c]\quad\forall a\in H_1(\Sigma_g).$$ 
This homomorphism is precisely the action in homology induced by the composition $\Delta_c^{\circ k}$ of $k$ times the Dehn twist around the curve $c$. By Remark \ref{rem:homotopically deform to dehn},  $f''=(f\circ\phi)\circ(\Delta_c^{-1})^{\circ k}$ is an equivalent topological marking to $f'$ that falls in the initial case. 
\end{proof}
In particular, in $\overline{\mathcal{S}}_{g,\stkout{2}}$ every point over a curve of compact type $C$ is characterized by a tuple $(C,x_-,x_+,m)$  where 
$m:H_1(\Sigma_g,\mathbb{Z})\rightarrow H_1(C,\mathbb{Z})$ is a surjective homomorphism.
There is a natural direct sum decomposition into symplectic submodules  $$H_1(C,\mathbb{Z})=H_1(C_1,\mathbb{Z})\oplus\cdots\oplus H_1(C_k,\mathbb{Z})$$ where $C_1,\ldots,C_k$ are the parts of $C$, each of genus $g_i=\text{genus}(C_i)$. This decomposition induces, via the marking $m$,  a decomposition \begin{equation}\label{eq:compact type decomposition}H_1(\Sigma_g,\mathbb{Z})=W_1\oplus\cdots\oplus W_k\end{equation} into pairwise orthogonal symplectic submodules of ranks $\text{rank}(W_i)=2g_i$.


As proven in \cite{CDF2}, using the homological characterization of Lemma \ref{l:homological characterization}, the restriction of attaching maps as in equation \eqref{eq:attach1} can be defined at the level of the coverings  
\begin{equation}
\label{eq:attaching at torelli level}\mrs_{g_1,\stkout{n_1\cup*}}\times\mrs_{g_2,\stkout{n_2\cup\star}}\rightarrow\overline{\mathcal{S}}_{g_1+g_2,\stkout{n_1+n_2}}\end{equation}  by the relation \begin{equation}\label{eq:attach torelli cpt type}(C_1,Q_1\cup*,m_1)\times(C_2,Q_2\cup\star,m_2)\mapsto (C_1\vee_{*=\star} C_2,Q_1\cup Q_2, m_1\oplus m_2).\end{equation} 
All points in the image have the same associated symplectic decomposition. Moreover, if $n_1+n_2=2$ then the image is contained in $\overline{\mathcal{S}}_{g,\text{\sout{2}}}$ and the range is a product of spaces $\mathcal{S}_{g,\stkout{n}}$ where $n=0,1$ or $2$. 

\begin{remark}\label{rem:extension to pts with one ns node}
Using Lemma \ref{l:homological characterization} we can extend the domain of the map \eqref{eq:attaching at torelli level} to pairs of curves with marked points such that at at most one has one non-separating node
\end{remark}


\subsection{Stable meromorphic forms with two simple poles}
\begin{definition}
 A stable meromorphic form with two simple poles and residues $\pm 1$ on a stable curve with two marked points $(C,x_-,x_+)\in\overline{\mathcal{M}}_{g,2}$ is a meromorphic form on each component of $C^*$ having at worst simple poles at the points corresponding to the nodes, such that the sum of residues at the branches of $C$ at any point $x\neq x_\pm$ is zero. Moreover, the residue is $-1$ at $x_-$ and $+1$ at $x_+$. The space of all such forms on a given stable curve $(C,x_-,x_+)$ is denoted by $\Omega^{\pm}(C,x_-,x_+)$. It is an affine space directed by the $g$-dimensional vector space $\Omega(C)$ of stable (holomorphic) forms on $C$ (with zero residue sum at all points).  
  \end{definition}
  On any component of $C^*$ the residue theorem holds for the restricted form,  telling us that the sum of residues in the component is zero. To be able to integrate $\omega$ along a path in $C$ it needs to avoid all poles of the restrictions to the components of $C^*$. 

In contrast with meromorphic forms on smooth curves, a stable meromorphic form can have a zero component, but not be zero globally. It can even have zero components and poles. On the complement of the nodes, zeros and poles, integration defines a translation structure. The underlying flat metric has the structure of a conical point angle $2\pi(1+\text{ord}_z(\omega))$ at a point $z$ of positive order or of an infinite half cylinder $\mathbb{H}^{\pm}/\mathbb{Z}$ at a simple pole of residue  $\pm 1$.  
At a node the metric is either a union of two conical points of possibly different angle (if the residue of the node is zero) or the disjoint union of two infinite half cylinders.

  \begin{definition}
  $\Omega_{0}^{\pm}(C,x_-,x_+)$ is the subset of forms in $\Omega^{\pm}(C,x_-,x_+)$ with zero residues at non-separating nodes.
  $\Omega_{0}^{\pm *}(C,x_-,x_+)$ is the subset of forms in $\Omega^{\pm}_{0}(C,x_-,x_+)$ with isolated zeros.  
  \end{definition}
 If $\omega\in \Omega^{\pm}_{0}(C,x_-,x_+)$ has some separating node, then its residue is, up to sign, zero or one  depending on whether $x_-,x_+$ lie on the same side of the node or not, i.e. depending on whether the node also separates the poles or not. 
 
Remark that the form $\omega$ can be integrated along any path avoiding the poles of the restrictions to components of $C^*$, hence any path avoiding $x_-,x_+$ and the nodes that separate the poles $x_-,x_+$. Remark also that, thanks to the Mayer-Vietoris sequence, any homology class in $H_1(C\setminus\{x_-,x_+\},\mathbb{Z})$ is represented by some path avoiding points with non-zero residue of $\omega$ (see the Appendix for details).

 The rank $g$ affine bundle \begin{equation}\label{eq:bundle of mero2simplepoles}\Omega^\pm\overline{\mathcal{M}}_{g,2}\rightarrow \overline{\mathcal{M}}_{g,2}\end{equation} is the bundle that has $\Omega^{\pm}(C,x_-,x_+)$  as fiber over the point $(C,x_-,x_+)$. It is a natural holomorphic extension of the affine bundle $\Omega^{\pm}\mathcal{M}_{g,2}\rightarrow \mathcal{M}_{g,2}$, directed by the Hodge vector bundle $\Omega\overline{\mathcal{M}}_{g,2}\rightarrow\overline{\mathcal{M}}_{g,2}$ of stable (holomorphic) forms. 
An element in $\Omega^\pm\overline{\mathcal{M}}_{g,2}$ will be denoted as a tuple $(C,x_-,x_+,\omega)$ or, since the information of the poles is already in $\omega$, by $(C,\omega)$. In \cite{5A18, 5A19} the reader can find more details on the structure of bundles of meromorphic one-forms over the Deligne-Mumford compactification and on Augmented Teichm\" uller space.  We are going to describe the relevant properties for our interest in the covers where the period map is naturally defined. 

The fiber bundle \eqref{eq:bundle of mero2simplepoles} can be pulled back via branched covers to topological bundles $$\Omega^\pm\overline{\mathcal{S}}_{g,2^*}\rightarrow\overline{\mathcal{S}}_{g,2^*}\text{ and  } \Omega^{\pm}\overline{\mathcal{S}}_{g,\text{\sout{2}}}\rightarrow\overline{\mathcal{S}}_{g,\text{\sout{2}}}\quad .$$ 
Elements in those covers will be denoted by $(C,[f],\omega)$ where $[f]$ denotes an equivalence class of topological marking under the covering group that defines the space.


Given a subset $\mathcal{K}$ of $\overline{\mathcal{M}}_{g,2}$ or any of its covers we denote by $\Omega^\pm\mathcal{K}$ the restriction of the $\Omega^{\pm}$-bundle  to the given subset. Its intersection with the set of forms having zero residues at all non-separating nodes will be denoted by  $\Omega^{\pm}_{0}\mathcal{K}$, and the intersection of the latter with the subset of forms isolated zeros by $\BOmega\mathcal{K}$.

On $\Omega_{0}^\pm\overline{\mathcal{S}}_{g,2^*}$ there is a well defined period map (see Appendix for details)

$${\mathcal Per}_{g,2^*}:\Omega_{0}^\pm\overline{\mathcal{S}}_{g,2^*}\rightarrow \text{Hom}(H_1(\Sigma_{g}\setminus\{p_-,p_+\}, \mathbb{Z});\mathbb{C})$$ that is equivariant with respect to the action of $\text{Mod}(\Sigma_{g,2})$ on source and target. It is defined by associating, to any class in the homology group, and a fixed marked form in $\Omega_{0}^\pm\overline{\mathcal{S}}_{g,2^*}$,  the integral of the form along the corresponding class in the stable curve (up to an appropriate choice of representative to allow for integration). The correspondence of classes is induced by the marking in homology. By construction the peripheral module $\Pi_2$ has rank one and its generator is sent to $\pm 1\in\mathbb{C}$.  

Similarly, we can define the period of an element in $\Omega_{0}^\pm\overline{\mathcal{S}}_{g,\text{\sout{2}}}$ as follows

\begin{definition}
 Given $(C,m,\omega)\in \Omega^{\pm}_{0}\overline{\mathcal{S}}_{g,\text{\sout{2}}}$ having zero residues at non-separating nodes there is a well defined homomorphism $$\Per(C,m,\omega): H_1(\Sigma_g,\mathbb{Z})\rightarrow\mathbb{C}/\mathbb{Z}\quad \text{ by } \gamma\mapsto \int_{m(\gamma)}\omega\quad \text{ mod }1$$
\end{definition}
\begin{definition}
 The  period map on $\Omega_{0}^{\pm}\overline{\mathcal{S}}_{g,\text{\sout{2}}}$ is the map  $$\percompact_{g}:\Omega^{\pm}_{0}\overline{\mathcal{S}}_{g,\text{\sout{2}}}\rightarrow H^1(\Sigma_g,\mathbb{C}/\mathbb{Z})$$ sending each point to its period homomorphism. 
\end{definition}
It extends the map $\per_g$ defined by Proposition \ref{p:period map on smooth curves} to a part of the boundary of $\Omega^{\pm}\overline{\mathcal{S}}_{g,\text{\sout{2}}}$ where it makes sense to talk about 'having the same period homomorphism' as a form in $\Omega^{\pm}\mrs_{g,\text{\sout{2}}}$. 

\begin{proposition}\label{p: period fiber cover}
The branched cover $\Omega^\pm\overline{\mathcal{S}}_{g,2^*}\rightarrow \Omega^\pm\overline{\mathcal{S}}_{g,\text{\sout{2}}}$ sends period fibers to period fibers. Its restriction to a fiber of $\percompact_{g,2^*}$ is a homeomorphism onto a fiber of $ \percompact_g$. 
\end{proposition}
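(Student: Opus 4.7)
My approach is to establish a commutative square relating the two period maps through the cover $q: \Omega_0^\pm \overline{\mathcal{S}}_{g,2^*} \to \Omega_0^\pm \overline{\mathcal{S}}_{g,\text{\sout{2}}}$, and then exploit how the covering group $G := \mathcal{I}(\Sigma_{g,\text{\sout{2}}})/\mathcal{I}(\Sigma_{g,2^*}) \cong \mathrm{Hom}(H_1(\Sigma_g), \Pi_2)$ acts on upstairs periods. Any period $P \in \mathrm{Hom}(H_1(\Sigma_g \setminus \{p_-,p_+\}), \mathbb{C})$ in the image of $\percompact_{g,2^*}$ satisfies $P(\pi^\pm) = \pm 1$, so $P(\Pi_2) \subset \mathbb{Z}$ and the reduction $P \bmod \mathbb{Z}$ descends to a homomorphism $r(P): H_1(\Sigma_g) \to \mathbb{C}/\mathbb{Z}$. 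Chasing the definitions -- integrate the form along a chosen lift of $\gamma \in H_1(\Sigma_g)$ and reduce modulo $\mathbb{Z}$ -- shows $r \circ \percompact_{g,2^*} = \percompact_g \circ q$, so $q$ carries any fiber $F_P := \percompact_{g,2^*}^{-1}(P)$ into $F_p := \percompact_g^{-1}(r(P))$.

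Next I would compute the effect of $G$ on upstairs periods. An element $\phi \in G$ is represented by some $\tilde\phi \in \mathcal{I}(\Sigma_{g,\text{\sout{2}}})$ whose action $\tilde\phi_* - \mathrm{id}$ on $H_1(\Sigma_g \setminus \{p_-,p_+\})$ factors through the projection $\mathrm{pr}$ to $H_1(\Sigma_g)$ followed by $\phi$ into $\Pi_2$, recovering the isomorphism from the preceding subsection on homology quotients. Pre-composing the marking with $\tilde\phi$ then shifts the upstairs period by $\iota \circ \phi \circ \mathrm{pr}$, where $\iota: \Pi_2 \cong \mathbb{Z} \hookrightarrow \mathbb{C}$ is integration along peripheral loops (with $\iota(\pi^\pm) = \pm 1$). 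Pinning down this shift formula is the one step needing careful bookkeeping of sign and orientation conventions; everything else is then routine.

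From these two facts the proposition follows quickly. \emph{Injectivity} of $q|_{F_P}$: two points of $F_P$ with equal $q$-image differ by some $\phi \in G$, forcing $\iota \circ \phi \circ \mathrm{pr} = 0$ and hence $\phi = 0$. \emph{Surjectivity} onto $F_p$: any lift of $(C, [\bar f], \omega) \in F_p$ has period $\tilde P$ with $P - \tilde P$ vanishing on $\Pi_2$ and integer-valued, so it equals $\iota \circ \phi \circ \mathrm{pr}$ for a unique $\phi \in G$, and acting by $\phi$ moves the lift into $F_P$. The same period-shift computation shows no $\phi \neq 0$ has a fixed point in $q^{-1}(F_p)$, since a fixed point would force the shift to be zero; thus the branch points of the cover $q$ discussed in the previous subsection never lie inside $q^{-1}(F_p)$. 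Consequently $G$ acts freely on $q^{-1}(F_p)$, the restriction $q|_{q^{-1}(F_p)} : q^{-1}(F_p) \to F_p$ is an unbranched Galois cover, and $F_P$ is a single sheet of it, yielding the desired homeomorphism.
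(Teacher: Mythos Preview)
Your argument is correct and follows essentially the same route as the paper. Both proofs hinge on the identical computation: the covering group $G\cong\text{Hom}(H_1(\Sigma_g),\Pi_2)$ acts on upstairs periods by $P\mapsto P + P|_{\Pi_2}\circ\phi\circ\mathrm{pr}$, and injectivity of $P|_{\Pi_2}$ forces this action to be free, so each upstairs fiber is a single sheet of an unramified cover of the downstairs fiber. Your write-up is in fact more explicit than the paper's about the surjectivity step (adjusting a given lift by the unique $\phi$ realizing the integer-valued difference $P-\tilde P$), which the paper subsumes into the phrase ``trivial monodromy''.
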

In particular the branch points of  $\Omega^\pm\overline{\mathcal{S}}_{g,2^*}\rightarrow \Omega^\pm\overline{\mathcal{S}}_{g,\text{\sout{2}}}$ do not occur at points with zero residues at the non-separating nodes (compare with Figure \ref{fig:nodal branch point}).
\begin{proof}
The first part of the statement is a consequence of the isomorphism in \eqref{eq:peripheral ex seq} and the fact that the condition on the residues implies that the peripheral module $\Pi_2\subset H_1(\Sigma_g\setminus\{p_-,p_+\})$ has periods in $\mathbb{Z}$.
Remark also that, by \eqref{eq:special cover}, the monodromy of the cover on the unbranched part is $\text{Hom}(H_1(\Sigma_g),\Pi_2)$ and the branch points correspond to some fixed point of this action. However, the action of the non-trivial elements of the covering group does not leave any fiber of the period map invariant. Indeed, the action of $h\in\text{Hom}(H_1(\Sigma_g),\Pi_2)$ on a homomorphism $p:H_1(\Sigma_{g,}\setminus \{p_-,p_+\})\rightarrow \mathbb{C}$ is the homomorphism $p\circ (\text{Id}+h\circ \pi)$ where $$\pi:H_1(\Sigma_g\setminus\{p_-,p_+\})\rightarrow\frac{H_1(\Sigma_g\setminus\{p_-,p_+\})}{\Pi_2}\cong H_1(\Sigma_g).$$ Suppose $p$ is the period of some element in $\Omega^\pm\overline{\mathcal{S}}_{g,2^*}$. In particular $p_{|\Pi_2}:\Pi_2\rightarrow\mathbb{Z}$ is injective. Suppose $p$ is fixed by the action of $h$. Then $p+p\circ h\circ \pi=p$,   Since the image of $h$ lies in $\Pi_2$ we deduce $h\circ \pi=0$. But since $\pi$ is surjective, this implies $h=0$.  

Let $F$ be a non-empty fiber of ${\mathcal Per }_{g}$ and $\hat{F}$ its preimage under the branched map, a union of fibers. 
The restriction of the branched cover to  $\hat{F}\rightarrow F$
has no branch points and trivial monodromy. This concludes the proof.   
\end{proof}
\subsection{The stratification of augmented Teichm\" uller space and period fibers}
The stratification of the ambient space $\Omega^\pm\overline{\mathcal{S}}_{g,2^*}$ induces a partition of the fibers of the period map. The next theorem describes this partition

\begin{theorem}\label{t:local structure special cover}

The local fiber $L$ of the period map in $\Omega^\pm\overline{\mathcal{S}}_{g,2^*}$ (resp. $\Omega^{\pm}\overline{\mathcal{S}}_{g,\text{\sout{2}}}$) at a point with isolated zeros projects to the orbifold chart of $\Omega\overline{\mathcal{M}}_{g,2}$ as a complex manifold transverse to all boundary divisors through the point. Therefore, $L$ is  an abelian ramified cover of a normal crossing divisor in $(\mathbb{C}^{2g-2},0)$ having precisely one component of codimension one in each component of codimension one of the ambient space through the point. 
\end{theorem}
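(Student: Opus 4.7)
The plan is to work in plumbing coordinates near \(x=(C,[f],\omega_0)\) and prove that the extended period map \(\percompact_{g,2^*}\) is a holomorphic submersion at \(x\) whose restriction to every boundary divisor through \(x\) remains submersive of the same rank. The analogous claim on \(\Omega^{\pm}\overline{\mathcal{S}}_{g,\text{\sout{2}}}\) then follows from Proposition \ref{p: period fiber cover}. The general deformation theory of stable meromorphic forms recalled in the Appendix provides local holomorphic coordinates
\((t_1,\ldots,t_k,u_1,\ldots,u_N)\)
on a neighborhood \(V\) of \(x\) in \(\Omega^{\pm,*}_0\overline{\mathcal{S}}_{g,2^*}\), where each \(t_i\) is the plumbing parameter opening the node \(n_i\) of \(C\) and \((u_j)\) parametrizes the stratum through \(x\). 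The codimension-one divisors through \(x\) are exactly \(\{t_i=0\}\); the cover to the orbifold chart of \(\Omega^\pm \overline{\mathcal{M}}_{g,2}\) is abelian ramified along those \(\{t_i=0\}\) corresponding to non-separating nodes, in the way described in Section 4.4 of \cite{CDF2}. For every class \(\gamma\in H_1(\Sigma_g\setminus\{p_-,p_+\},\mathbb{Z})\), a loop representing \(\gamma\) on the marking of \(C\) can, by Mayer--Vietoris and the vanishing of residues at non-separating nodes, be chosen to avoid all nodes and poles of \(\omega_0\); such a loop then extends continuously to all nearby plumbed surfaces, and \(\int_\gamma \omega_{(t,u)}\) becomes a holomorphic function of \((t,u)\), so \(\percompact_{g,2^*}\) extends holomorphically to \(V\).

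The heart of the argument is to verify surjectivity of \(d\percompact_{g,2^*}\) at \(x\) onto the \(2g\)-dimensional affine subspace of \(\operatorname{Hom}(H_1(\Sigma_g\setminus\{p_-,p_+\}),\mathbb{C})\) cut out by the \(\pm 1\) residue normalization, together with the persistence of this surjectivity on each subfamily \(\bigcap_{i\in I}\{t_i=0\}\). I would establish this in two substeps. On the boundary stratum \(\{t_1=\cdots=t_k=0\}\), the stable form decomposes under \eqref{eq:compact type decomposition} as a tuple of meromorphic forms on the normalization; via the attaching map \eqref{eq:attaching at torelli level}, the period map splits as a product of period maps on the parts, each submersive by Proposition \ref{p:period map on smooth curves}, and the combined image is the subspace of homomorphisms assigning to every vanishing cycle \([\gamma_i]\) the prescribed residue period of \(\omega_0\) at \(n_i\). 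On top of this, I would show that each plumbing direction \(\partial/\partial t_i\) adds an independent period variation along a cycle \(\delta_i\) dual to \(\gamma_i\): in standard plumbing coordinates \(\int_{\delta_i}\omega_{(t,u)}\) contains a term of the form \(\rho_i\log t_i\) plus a holomorphic remainder, where \(\rho_i\) is the residue of \(\omega_0\) at \(n_i\); and when \(\rho_i=0\) (the case of non-separating nodes, by the \(\Omega^{\pm,*}_0\) assumption) the leading variation is a nonzero linear term in \(t_i\) produced by the first non-vanishing jet of \(\omega_0\) at \(n_i\), which is nonzero precisely because \(\omega_0\) has isolated zeros and hence no zero components. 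Combining the two substeps yields the desired surjectivity at \(x\); discarding a subset of plumbing directions leaves the argument intact and gives the persistence on each boundary intersection.

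Once submersivity is established on the ambient and preserved on each \(\{t_i=0\}\), the fiber \(L=\percompact_{g,2^*}^{-1}(\percompact_{g,2^*}(x))\cap V\) is a complex submanifold of \(V\) transverse to every boundary divisor through \(x\). Projecting to the orbifold chart of \(\Omega\overline{\mathcal{M}}_{g,2}\) gives the claimed complex submanifold transverse to the normal-crossing boundary, and restricting the ambient abelian ramified cover structure to this submanifold endows \(L\) itself with the structure of an abelian ramified cover of a normal crossing divisor, with precisely one codimension-one component in each of the ambient codimension-one components through \(x\) (since transversality forces the intersection \(L\cap\{t_i=0\}\) to be locally smooth and hence irreducible at \(x\)). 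Proposition \ref{p: period fiber cover} transfers the description verbatim to \(\Omega^{\pm}\overline{\mathcal{S}}_{g,\text{\sout{2}}}\).

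The main obstacle I expect is the second substep above: proving that each plumbing variation \(\partial/\partial t_i\) produces a period direction genuinely independent of those produced by varying \(u\) along the stratum. This is where the hypothesis of isolated zeros is essential: the leading-order behavior of \(\int_{\delta_i}\omega\) as \(t_i\) varies is controlled by the local jet of \(\omega_0\) at \(n_i\), and a zero component at \(n_i\) would annihilate this leading term and destroy the transversality.
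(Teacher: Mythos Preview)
Your reduction of the second case to the first via Proposition \ref{p: period fiber cover} is correct, and substep~1 is close in spirit to what the paper does. The gap is in substep~2. For a non-separating node \(n_i\) (where \(\rho_i=0\) by the \(\Omega_0\) hypothesis), the period \(\int_{\delta_i}\omega\) along the dual cycle is \emph{already} freely varied by the stratum directions: in the period coordinates of Lemma \ref{l:period coordinates} on the substratum, \(\int_{\delta_i}\omega\) is one of the coordinates. So whatever contribution \(\partial/\partial t_i\) makes to \(\int_{\delta_i}\omega\), it cannot be independent of the stratum directions. What the stratum directions genuinely fail to move is \(\int_{\gamma_i}\omega\), which is pinned to zero on the boundary; but the standard plumbing of a zero-residue node keeps the waist integral \(\int_{\gamma_i}\omega\) equal to zero as well, so the plumbing direction does not supply the missing \(\gamma_i\)-period either. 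For separating nodes with residue \(\pm 1\) there is no cycle \(\delta_i\) in \(H_1(\Sigma_g\setminus\{p_-,p_+\},\mathbb Z)\) with \(\delta_i\cdot\gamma_i=1\), so the \(\rho_i\log t_i\) heuristic has no target in the period map. There is also a framing problem: \(\Omega_0^{\pm,*}\overline{\mathcal{S}}_{g,2^*}\) is not a manifold near a non-separating boundary point (the zero-residue condition is imposed only on the boundary stratum), so holomorphic plumbing coordinates on it do not exist as stated; one must first pass to the orbifold chart \(U_c/\Gamma_c\).

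The paper avoids plumbing directions entirely. It first factors off the \(\gamma_i\)-periods via the holomorphic map \(\mathrm{NRes}:\Omega(U_{\mathbf c}/\Gamma_{\mathbf c})\to\operatorname{Hom}(\ker[f]_*,\mathbb C)\), which is submersive by Riemann--Roch, and observes that on the smooth submanifold \(\mathrm{NRes}^{-1}(p_{|\ker[f]_*})\) the remaining period map \(h\) is \(\Gamma_{\mathbf c}\)-invariant and hence descends holomorphically to the chart. The key lemma (Lemma \ref{l:submersion on stratum}) is that \(h\) restricted to the \emph{substratum} through \(x\) is already submersive onto the reduced target \(\operatorname{Hom}_{p_{|\ker[f]_*}}(H_1(\Sigma_{g,2^*}),\mathbb C)\); this is proved using the period coordinates of Lemma \ref{l:period coordinates} on each part of the normalization (which is where the isolated-zeros hypothesis enters). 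Submersivity on the substratum forces the fiber of \(h\) to be smooth and transverse to the substratum, hence to every boundary divisor containing it, and the abelian-ramified-cover description follows. In short: your substep~1, done with period coordinates on the substratum rather than by invoking Proposition \ref{p:period map on smooth curves} part by part, \emph{is} the whole argument; substep~2 is not needed and, as formulated, does not work.
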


The statement is not true in general for forms with zero components.
\begin{proof}
The proof in $\Omega^\pm\overline{\mathcal{S}}_{g,2^*}$ (and more generally for $\Omega_{0}\overline{\mathcal{S}}_{g,n^*}$ with $n\geq 2$, i.e. for forms with at least two simple poles whose non-serparating nodes have zero residue)  follows exactly the same lines of the proof of the equivalent Theorem for abelian differentials in Section 4.18 of \cite{CDF2}. For completeness and future reference, we include a proof  in the appendix. 

As for the case of $\Omega^{\pm}\overline{\mathcal{S}}_{g,\text{\sout{2}}}$, the restriction of the homeomorphism of Proposition \ref{p: period fiber cover} to the neighbourhood of a point provides a homeomorphism that preserves boundary points. Hence the result. \end{proof}
\begin{corollary}\label{c:conn of closure equiv to conn}
Let  $F\subset\Omega^{\pm}\mrs_{g,\text{\sout{2}}}$ be a fiber of $\per_g$ and $\overline{F}$ its closure in $\Omega^{\pm*}_{0}\overline{\mathcal{S}}_{g,\text{\sout{2}}}$. Then $F$ is connected if and only if $\overline{F}$ is connected. \end{corollary}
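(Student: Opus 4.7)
The plan is to show this by a standard ``dense plus locally connected at boundary points'' argument, leveraging Theorem \ref{t:local structure special cover} as the entire technical input.

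First I would dispose of the easy direction: if $F$ is connected, then $\overline{F}$ is connected because the closure of a connected set is connected. The content is therefore in the converse, and my strategy there is to reduce connectedness of the open stratum $F$ to connectedness of $\overline{F}$ via local topology along $\overline{F}\setminus F$.

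Second, I would establish two structural properties of the pair $F\subset\overline{F}$. The inclusion $\Omega^{\pm}\mathcal{S}_{g,\text{\sout{2}}}\subset \Omega^{\pm,*}_0\overline{\mathcal{S}}_{g,\text{\sout{2}}}$ is open, so $F$ is open in $\overline{F}$; by definition of closure it is also dense. The third and crucial property is local connectedness of $F$ near boundary points, and this is exactly what Theorem \ref{t:local structure special cover} delivers. Given $x\in\overline{F}\setminus F$, a neighborhood of $x$ in $\overline{F}$ is an abelian ramified cover of a normal crossing divisor germ $D\subset(\mathbb C^{2g-1},0)$, and $F$ corresponds to the complement of the boundary strata. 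Since the complement of a normal crossing divisor in a polydisk is of the form $(\Delta^*)^k\times \Delta^{2g-1-k}$, hence connected, and since a connected ramified cover of a connected space is connected, the set $F$ admits a connected neighborhood basis at every boundary point of $\overline{F}$.

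Third, I would conclude by a contradiction argument. Suppose $\overline{F}$ is connected but $F=F_1\sqcup F_2$ splits into two non-empty open sets. Let $\overline{F}_i$ be the closure in $\overline{F}$; density of $F$ gives $\overline{F}=\overline{F}_1\cup\overline{F}_2$, and connectedness of $\overline{F}$ forces $\overline{F}_1\cap\overline{F}_2\neq\emptyset$. Pick a point $x$ in this intersection; necessarily $x\in\overline{F}\setminus F$, and by the previous step $x$ has a neighborhood $V$ in $\overline{F}$ with $V\cap F$ connected. But then $V\cap F$ lies entirely in $F_1$ or entirely in $F_2$, contradicting $x\in\overline{F}_1\cap\overline{F}_2$.

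The only non-formal step is verifying that the local model from Theorem \ref{t:local structure special cover} really does produce a \emph{connected} local complement. This is the step I expect to require the most care, but it is a soft topological fact once one trusts the ramified-cover-of-normal-crossings description: the key is that abelian ramified covers of $(\Delta^*)^k\times\Delta^{2g-1-k}$ are connected because the base is connected and the cover is irreducible over it. Everything else is bookkeeping with closures and open sets.
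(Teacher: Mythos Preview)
Your argument is correct and follows essentially the same strategy as the paper: both use the local structure theorem to show that near any boundary point of $\overline{F}$ the intersection with $F$ is connected, and then run a standard dense--open argument. The paper's proof differs only cosmetically: it first transfers the question to the cover $\Omega^\pm\overline{\mathcal{S}}_{g,2^*}$ via Proposition~\ref{p: period fiber cover} and then invokes the Appendix version (Corollary~\ref{c:connected closure}, which uses a path argument rather than your clopen decomposition), whereas you apply Theorem~\ref{t:local structure special cover} directly in the $\stkout{2}$ setting---which is fine, since that theorem explicitly covers both spaces. One small remark: your justification that the local complement of the boundary is connected (``the cover is irreducible over it'') is a bit loosely phrased; the clean statement is that the local fiber $L$ projects to a smooth manifold $H$ in the orbifold chart with $H\setminus\partial H$ connected, and $L\setminus\partial L$ is then connected because it is a finite cover of $H\setminus\partial H$ with connected total space $L$ (complement of a divisor in a connected complex space). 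Also, the dimension is $2g-1$ as you write, not $2g-2$ as appears in the theorem statement---this is a typo in the paper, consistent with the Appendix formula $2g+n-3$ at $n=2$.
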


\begin{proof}
The homeomorphism of Proposition \ref{p: period fiber cover} sends boundary points to boundary points, so it suffices to prove the equivalent statement for a fiber of the period map on the cover  $\Omega^\pm\mathcal{S}_{g,2^*}$ and its closure in the space  $\Omega_{0}^{\pm*}\overline{\mathcal{S}}_{g,2^*}$ of homologically marked meromorphic stable forms with zero residues at non-separating nodes and isolated zeros. This equivalent statement is proven in Corollary \ref{c:connected closure} of the Appendix. 

\end{proof}

\begin{definition}
 The closure of the fiber of $\per_g$ in $ \Omega^{\pm*}_{0}\overline{\mathcal{S}}_{g,\text{\sout{2}}}$ is what we consider as bordification of the fiber. It coincides with the fiber of $\percompact_g$ on $ \Omega^{\pm*}_{0}\overline{\mathcal{S}}_{g,\text{\sout{2}}}$. 
 \end{definition}
\subsection{Smoothings, simple boundary points,  and the proof of Theorem \ref{t:realization}}
 \begin{definition}
 A path in $\Omega_0^{\pm }\overline{\mathcal{S}}_{g,\stkout{2}}$ is said to be isoperiodic if the value of $\percompact_g$ is constant along it (i.e. it lies in a fiber of $\percompact_g$). To simplify the notations, two marked forms $\omega_1$ and $\omega_2$ in $\Omega_{0}^{\pm*}\overline{\mathcal{S}}_{g,\stkout{2}}$ are equivalent and denoted $\omega_1\sim\omega_2$ if they can be joined by an isoperiodic path in the total space. 
\end{definition}

Recall that every node of a stable curve $C$ determines a boundary component of $\overline{\mathcal{S}}_{g,\stkout{2}}$ passing through $C$.
\begin{definition}
A smoothing of a node of a stable form is an equivalent form that does not belong to the boundary component corresponding to the node but that belongs to the other boundary components passing through the point.
\end{definition}
The metric description of the neighbourhood of the node when smoothing it is a flat cylinder of finite area. If the residue at the node is non-zero, when we get closer to the form $\omega$, the  circumference of the annulus stays constant but its volume tends to grow, so the cylinder becomes long. 

\begin{definition}
 A smoothing of a boundary point $\omega$ with isolated zeros is an equivalent form on a non-singular curve. They do always exist thanks to Theorem \ref{t:local structure special cover}.
 \end{definition}

 Remark that the dual graph of a curve of compact type supporting a meromorphic form  with two simple poles, each of whose nodes separate the two poles is a segment. We denote such a form as an \textit{ordered} sequence $\omega_1\vee\cdots\vee\omega_k$ where $\omega_i$ is a meromorphic form with two poles on a smooth curve $C_i$, $\omega_1$ (resp. $\omega_k$) has a pole of residue $-1$ (resp. $+1$) and the pole of residue $+1$ of $\omega_j$ is glued to the pole of residue $-1$ of $\omega_{j+1}$ to form a node. Each $\omega_i$ is called a part of the form and has isolated zeros.

The maps \eqref{eq:attaching at torelli level} extend at the level of forms:

\begin{equation}
\label{eq:attaching forms at torelli level}\Omega^{\pm}\mrs_{g_1,\stkout{n_1\cup*}}\times\Omega^\pm\mrs_{g_2,\stkout{n_2\cup\star}}\rightarrow\Omega_0^{\pm *}\overline{\mathcal{S}}_{g_1+g_2,\stkout{n_1+n_2}}\end{equation}  by the relation \begin{equation}\label{eq:attach torelli forms cpt type}(C_1,Q_1\cup*,m_1,\omega_1)\times(C_2,Q_2\cup\star,m_2,\omega_2)\mapsto (C_1\vee_{*=\star} C_2,Q_1\cup Q_2, m_1\oplus m_2,\omega_1\vee\omega_2).\end{equation} by imposing that the two marked points where we glue are of opposite residue. In particular  if one of the poles is chosen chosen as marked point, the other is completely determined.

\begin{remark}
The map \eqref{eq:attaching forms at torelli level} is continuous and sends products of period fibers to period fibers. In particular it sends products of isoperiodic paths to an isoperiodic path. We use the following shorthand notation \begin{equation}\label{eq:attaching preserve equivalence}
\omega_{1}\sim\omega_1',\quad \omega_{2}\sim\omega_2'\Rightarrow\omega_{1}\vee\omega_{2}\sim\omega'_{1}\vee\omega'_{2}
\end{equation}
\end{remark}
A generalization of the following remark motivates the inductive proof of Theorems \ref{t:realization} and \ref{t:Cgn}: 

\begin{remark}\label{rem:realization with poles on elliptic curves}
Given a holomorphic form $\omega$ on an elliptic curve $E$ the form $\omega\vee\frac{dz}{2i\pi z}$ on the stable curve $E\vee\Pone$ defines a boundary point in $\Omega_0^{\pm^*}\overline{\mathcal{S}}_{1,\stkout{2}}$, and the same periods in $\mathbb{C}/\mathbb{Z}$ as $\omega$. The smoothing of the latter defines an isoperiodic form with two poles on a smooth elliptic curve. 
\end{remark}

\begin{definition} 
 A boundary point of $\Omega_0^{\pm*}\overline{\mathcal{S}}_{g,\stkout{2}}$ is said to be simple if the underlying nodal curve is of compact type, each node separates the poles, and the degree of each part is at least three. 
\end{definition}

Simple boundary points do not exist for fibers corresponding to periods of degree two.  Let us analyze the existence of simple boundary points for other fibers.
Recall the following definition
\begin{definition}
Let $g\geq 2$ and $p:H_1(\Sigma_g)\rightarrow A$ be non-trivial homomorphism of abelian groups. A symplectic decomposition $H_1(\Sigma_g)=V_1\oplus\cdots\oplus V_k$ is said to be $p$-admissible if none of the restrictions $p_{|V_i}$ is trivial. 
\end{definition}
A simple boundary point $(C_1\vee\cdots\vee C_k,m,\omega_1\vee\ldots\vee\omega_k)$ induces a $[p]$-admissible \textit{ordered} decomposition of the homology group $H_1(\Sigma_g)$ $$V_1\oplus \cdots\oplus V_k\text{ where } V_i=m^{-1}(H_1(C_i))$$  where $[p]$ is the class modulo $\frac{1}{2}\mathbb{Z}$ of the period homomorphism  $$p=\percompact_g(C_1\vee\cdots\vee C_k,m,\omega_1\vee\ldots\vee\omega_k):H_1(\Sigma_g)\rightarrow \mathbb{C}/\mathbb{Z}.$$ The order of the factors is determined by the order of the forms, baring in mind that the first one (resp. last one) has a pole of $-1$-residue (resp. $+1$-residue)
\begin{lemma}\label{l:existence of adm decomp}
For $g\geq 2$ and $p:H_1(\Sigma_g,\mathbb{Z})\rightarrow \mathbb{C}/\mathbb{Z}$ non-trivial, there exists a $p$-admissible decomposition. If $\deg p\geq 3$ there exists a $p$-admissible decomposition whose factors have also degree at least three.

Any such decomposition is induced by some stable meromorphic form with two poles on a curve of compact type whose nodes separate the two poles. 
\end{lemma}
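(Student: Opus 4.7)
The lemma has two pieces: an algebraic existence statement for $p$-admissible decompositions, and a geometric realization of such decompositions by stable forms. For the existence part, observe that $g\geq 2$ forces $\mathrm{rank}\,H_1(\Sigma_g,\mathbb{Z})=2g\geq 4$, so Corollary \ref{c:padmissible decompositions exist} applied to the non-trivial homomorphism $p$ immediately provides a $p$-admissible decomposition. When $\deg p\geq 3$, Remark \ref{rem:degreeatleastthree} says that the composition $[p]$ of $p$ with the projection $\mathbb{C}/\mathbb{Z}\to \mathbb{C}/\tfrac{1}{2}\mathbb{Z}$ is non-trivial, so the same corollary applied to $[p]$ yields a $[p]$-admissible decomposition $V_1\oplus\cdots\oplus V_k$ in which each restriction $[p_{|V_i}]$ is non-trivial, equivalently $\deg p_{|V_i}\geq 3$ by Remark \ref{rem:degreeatleastthree} again.

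For the realization, given a $p$-admissible decomposition $V_1\oplus\cdots\oplus V_k$ with $g_i=\tfrac{1}{2}\mathrm{rank}(V_i)$, the plan is to produce, for each $i$, a homologically marked meromorphic form $(C_i,m_i,\omega_i)$ on a smooth curve of genus $g_i$ realizing $p_{|V_i}$ with simple poles of residues $\pm 1$, and then concatenate the $\omega_i$ linearly via the attaching map \eqref{eq:attach torelli forms cpt type} by identifying the $+1$-pole of $\omega_i$ with the $-1$-pole of $\omega_{i+1}$. The resulting triple $(C_1\vee\cdots\vee C_k,\,m_1\oplus\cdots\oplus m_k,\,\omega_1\vee\cdots\vee\omega_k)$ lies in $\BOmega\mnc_{g,\stkout{2}}$; its underlying curve is of compact type with $k-1$ nodes, each separating the two remaining global poles; its marking pulls $H_1(C_1)\oplus\cdots\oplus H_1(C_k)$ back to exactly $V_1\oplus\cdots\oplus V_k$; and its period is $p$ by the additivity of integration along cycles crossing the nodes. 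For genus-one pieces, Lemma \ref{l: computation of the isoperiodic foliation in genus one} directly supplies the form $\omega_i$ when $p_{|V_i}$ forces it to have two simple poles; otherwise we start from the holomorphic realization on the elliptic curve and use Remark \ref{rem:realization with poles on elliptic curves} combined with Theorem \ref{t:local structure special cover} to smooth the auxiliary $\mathbb{P}^1$-node and obtain a meromorphic form with two simple poles that realizes the period.

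The main obstacle is the higher-genus pieces: realizing $p_{|V_i}$ on a smooth curve of genus $g_i\geq 2$ is precisely Theorem \ref{t:realization} in genus $g_i$. For non-trivial decompositions $k\geq 2$ this is harmless provided one argues inductively in $g$, since every $g_i<g$ and the inductive hypothesis (together with the base cases $g=1,2$ treated in Section \ref{s:analytic methods for genus one and two}) supplies the needed pieces. However, for the trivial decomposition $k=1$ the realization claim collapses to Theorem \ref{t:realization} in genus $g$ itself. Consequently the lemma and Theorem \ref{t:realization} should be proved together by induction on the genus: the inductive step first invokes the algebraic part of the lemma to produce a non-trivial decomposition with each $g_i<g$, then glues the inductively supplied pieces as above to obtain the stable-form realization, and finally (for Theorem \ref{t:realization}) smooths the separating nodes via Theorem \ref{t:local structure special cover} to descend to a realization on a smooth curve.
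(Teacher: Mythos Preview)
Your argument is correct, but it takes a more circuitous route than the paper's. The key difference is in the realization step. You propose to realize each factor $V_i$ by a form on a \emph{smooth} curve of genus $g_i$, which for $g_i\geq 2$ requires Theorem~\ref{t:realization} in genus $g_i$; you then correctly untangle the resulting circularity by setting up a joint induction on $g$ for the lemma and Theorem~\ref{t:realization} simultaneously. The paper avoids this entirely: given \emph{any} $p$-admissible decomposition (including the trivial one), it first refines it---by iterating Corollary~\ref{c:padmissible decompositions exist} on each factor of rank $\geq 4$---down to a sub-decomposition into rank-two pieces on which $p$ is still non-trivial. Each rank-two factor is then realized on an elliptic curve with two simple poles via Lemma~\ref{l: computation of the isoperiodic foliation in genus one} and Remark~\ref{rem:realization with poles on elliptic curves}, the pieces are glued into a chain, and finally one smooths exactly those nodes needed to recover the original (coarser) decomposition. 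This makes the lemma entirely self-contained (no appeal to Theorem~\ref{t:realization} in any genus), after which Theorem~\ref{t:realization} follows in one line by smoothing all nodes. Your joint induction works, but the paper's refine-to-genus-one trick is both shorter and keeps the logical dependencies linear.
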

\begin{proof}
The algebraic part is proven in  Corollary \ref{c:padmissible decompositions exist}. 
By inductively applying the algebraic result we can find a sub-decomposition $H_1(\Sigma_g,\mathbb{Z})=V_1\oplus\ldots \oplus V_k$ of any given decomposition into symplectic submodules of rank two such that $p_{|V_i}$ is non-trivial. Lemma \ref{l: computation of the isoperiodic foliation in genus one} implies that there is either a form with two simple poles or a holomorphic form $\omega_i$ realizing $p_{|V_i}$. By Remark \ref{rem:realization with poles on elliptic curves} we can suppose that $\omega_i$ has two simple poles.  
By considering forms on smooth genus one  curves $E_i$  with two poles  $\omega_i$ and periods $p_i$ we can construct the boundary form of compact type $\omega_1\vee\ldots\vee\omega_k$ of period $p$ by connecting the negative pole of one with the positive of the next to a stable meromorphic form.  
Smoothing the relevant nodes, we can realize any decomposition obtained by joining factors of the one induced by $V_i's$.

\end{proof}

\begin{proof}[Proof of Theorem \ref{t:realization}]
For $g=1$ we use Remark \ref{rem:realization with poles on elliptic curves}. For $g\geq 2$ we realize $p$ as the period of a form on a marked stable curve of compact type by using Lemma \ref{l:existence of adm decomp}. By smoothening its nodes, we obtain the desired form on a smooth curve.
\end{proof}




\section{Degenerating to a simple boundary point}
\label{s:degenerating to a simple boundary point}
A Schiffer variation (see \cite{CDF2} and references therein for details) allows to define isoperiodic paths in $\Omega^\pm\overline{\mathcal{M}}_{g,n}$ (and its cover $\Omega_0^{\pm*}\mrs_{g,\stkout{2}}$) by deforming any marked stable meromorphic form having an isolated zero continuously in its fiber of the period map. It is defined by a continuous surgery for any given family of twin paths, i.e. paths $\gamma_1,\gamma_2, \ldots,\gamma_k$ with parameter space $[0,1)$  starting at an isolated zero (nodes with zero residue are allowed) of a meromorphic form $(C,\omega)$ such that the $k$ paths in $\mathbb{C}$ defined by integration $$t\mapsto\int_{\gamma_{i|[0,t]}}\omega\quad\text{ coincide for }\quad i=1,\ldots,k.$$ It actually deforms the flat structure underlying the form. For more details and examples of deformations that include nodal curves, we refer to \cite{CDF2}. If the form is marked, the surgery allows to follow the marking along the surgery. In this section we use Schiffer variations to connect any meromorphic form to some form on a nodal curve. 



\begin{proposition}\label{p:degeneration2}
 Any meromorphic form on a smooth genus $g\geq 2$ Riemann surface with two simple poles of residues $\pm 1$ can be deformed via a sequence of Schiffer variations to a stable meromorphic form on a singular stable curve with one node and isolated zeros. If the periods are real we can further suppose that the node is separating and separates the poles. Otherwise we can suppose that the node has zero residue.
 \end{proposition}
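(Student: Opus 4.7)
The plan is to exploit the flat metric that $\omega$ defines on $C\setminus\{x_-,x_+\}$, which has $2g\geq 4$ zeros (counted with multiplicity) and two infinite half-cylindrical ends at the poles. Schiffer variations supply isoperiodic deformations of this flat structure, and the strategy is to construct a sequence of such variations culminating in the collapse of a flat cylinder on $C$ to a single node of the prescribed type. The mechanism used throughout is that a cylinder bounded by two homologous saddle connections can be collapsed by a Schiffer variation along its two boundary paths, producing in the limit a node whose residue equals the period of the cylinder's core curve.

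As a preparatory step, one applies a few Schiffer variations to split higher-order zeros into simple ones and to arrange a geometric configuration around them adapted to the period type. In the \emph{non-real} case, the simplest target is a nullhomotopic short closed geodesic $\gamma$ around a simple zero $z_0$ bounding a small flat disk containing no pole: such a loop is contractible on $C$, hence has trivial period, and it does not separate the two poles. Collapsing the cylinder around $\gamma$ by a Schiffer variation on its two homologous boundary saddle connections yields a separating node whose two sides together contain $\{x_-,x_+\}$ on one side, hence has residue zero as required.

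In the \emph{real} case, the reality of absolute periods makes the function $h = \mathrm{Im}\int\omega$ well defined on $C\setminus\{x_-,x_+\}$ up to translation by $2\pi$, with opposite winding numbers $+1$ and $-1$ around $x_+$ and $x_-$ respectively (as follows from the residue convention). The generic level sets of $h$ are smooth $1$-submanifolds consisting of arcs joining the two poles and of closed leaves, and an elementary topological argument using the opposite windings shows that for some value of $h$ the corresponding level set has a closed component that separates $x_-$ from $x_+$ in $C$. Collapsing by Schiffer variation the horizontal cylinder around such a closed leaf produces a separating node separating the two poles, with residue $\pm 1$.

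The main technical obstacle is ensuring that the limit form is \emph{stable with isolated zeros}: the cylinder to be collapsed must contain no zero in its interior, and the two resulting parts must each be stable curves. Both conditions can be arranged by choosing the cylinder small enough in the non-real case, or by selecting a closed horizontal leaf avoiding the saddle trajectories through the remaining zeros in the real case; when necessary, additional preparatory Schiffer variations can redistribute zeros, using the abundance of zeros available under the hypothesis $g\geq 2$.
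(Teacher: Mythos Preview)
Your proposal has genuine gaps in both cases.

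\textbf{Non-real case.} The object you propose to pinch --- a ``nullhomotopic short closed geodesic around a simple zero bounding a small flat disk'' --- does not exist as described: a simple zero is a cone point of angle $4\pi$, and small metric circles around it are neither geodesics nor bounded by a pair of homologous saddle connections. More seriously, even granting some way to pinch off a small disk containing a single simple zero, the resulting component would be a rational curve carrying exactly two special points (the node and the zero), hence not stable; and since the node has zero residue, the form on that rational piece would be a holomorphic form on $\mathbb{P}^1$ with a zero, hence identically zero, violating the isolated-zeros requirement. The paper proceeds quite differently: it uses Schiffer variations along shortest saddle connections to merge zeros until either all zeros coincide or a pair of distinct twin paths with common endpoint at a zero is found; in the latter case the Schiffer variation along that pair pinches a homologically nontrivial curve and produces a node of zero residue with no zero component. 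When only one zero remains, a structural analysis of the horizontal foliation (invoking the cylinder/minimal-component dichotomy of \cite{Tahar1}) is used to locate the required pair of twins.

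\textbf{Real case.} With the paper's normalization the peripheral periods are $\pm 1\in\mathbb{R}$, so when all absolute periods are real the function $h=\mathrm{Im}\int\omega$ is genuinely single-valued on $C\setminus\{x_-,x_+\}$ (no ``translation by $2\pi$'' ambiguity, no ``winding numbers''), proper, with $h\to\pm\infty$ at the poles; its regular level sets are unions of closed horizontal leaves, not arcs joining the poles. More to the point, \emph{every} regular value of $h$ has a level component separating the poles, so mere existence of such a curve is not the issue. The actual difficulty --- which you wave at only in your last sentence --- is to arrange that \emph{both} sides of the chosen separating leaf have positive genus: otherwise one side is $\mathbb{P}^1$ with only two special points (one pole and the node), which is not stable. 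The paper devotes the bulk of its real-case argument to precisely this: after perturbing so that all zeros are simple with distinct heights, it introduces the trivalent ``graph of horizontal cylinders'' and shows, by explicit Schiffer moves that shift zeros along that graph, how to reach a configuration in which some intermediate height carries a single cylinder; that cylinder then separates $C$ into two pieces of positive genus and can be stretched to produce the desired node. Your ``elementary topological argument'' does not supply this, and the claim that one can ``redistribute zeros'' when necessary is exactly the nontrivial content that needs to be proved.
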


\subsection{Proof of Proposition \ref{p:degeneration2} in the case of real periods}

 The integral of the imaginary part of a meromorphic form $\omega$ with real periods and two poles provides a harmonic surjective first integral $h_{\omega}:X\rightarrow \mathbb{R}$ for the horizontal foliation. We claim that up to a Schiffer variation we can suppose that $h_{\omega}$ has a regular connected fiber (that is necessarily a closed leaf) that separates $X$ in two surfaces of positive genera. The cylinder it generates has a height. By moving all zeroes in each side of the leaf, in the imaginary direction, towards the pole in the corresponding side, at the same speed, we have that the height of the cylinder of closed leaves generated by the chosen leaf is growing. In the limit we get a stable form on a singular nodal curve with a separating node none of whose components is the zero form. 
 \begin{figure}[h]
     \centering
     \includegraphics[height=7cm]{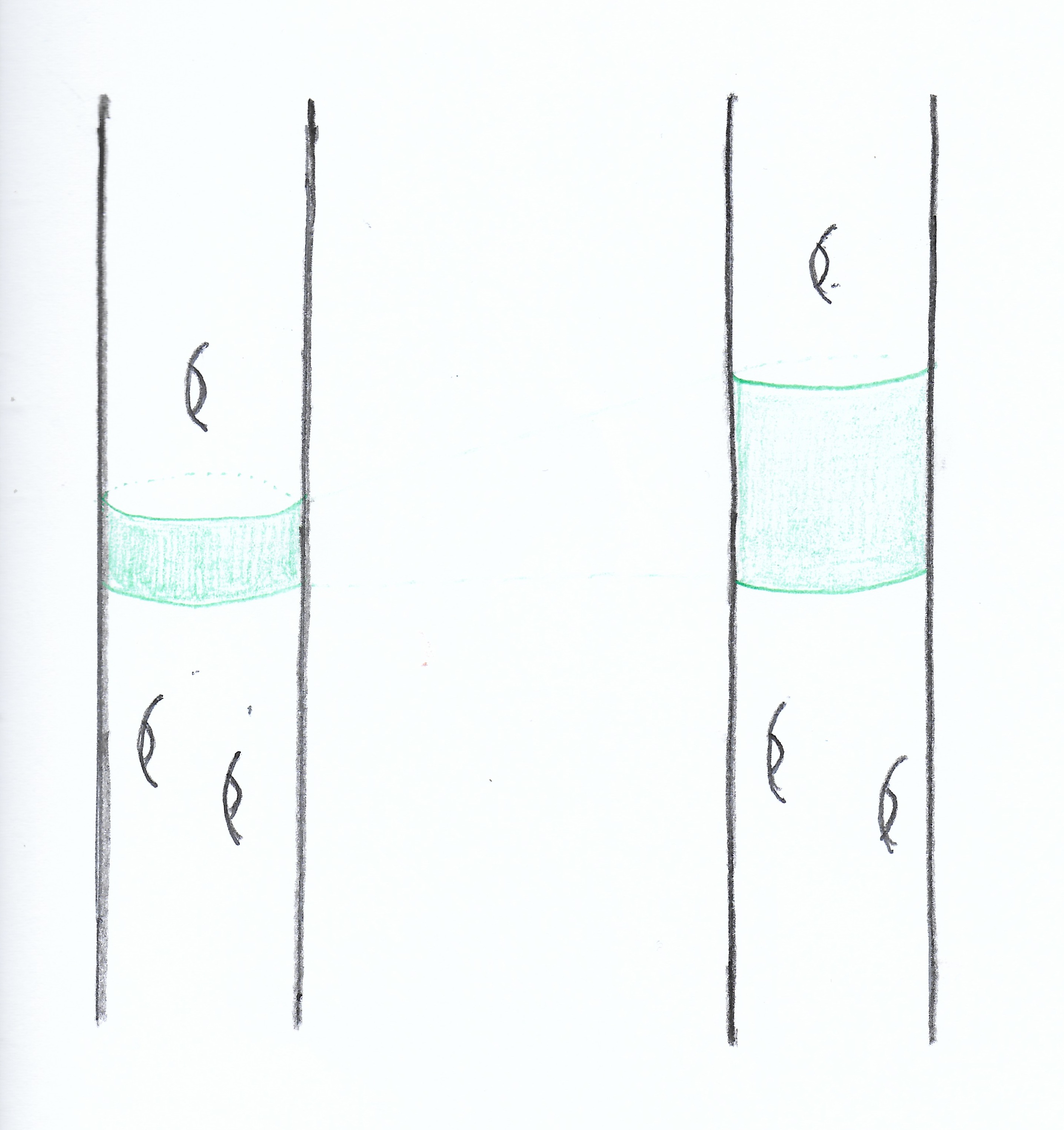}
     \caption{Isoperiodic stretching of a cylinder by appropriate Schiffer variations}
     \label{fig:graphofcylinders}
 \end{figure}
 
 To get the desired closed leaf, first apply some Schiffer variations in the imaginary direction to guarantee that all zeroes of $\omega$ are simple and have different values $-\infty<x_1<x_2<\ldots<x_{2g}<\infty$  of the first integral $h_{\omega}$. The fiber $h_\omega^{-1}(x)$ over a point $x\neq x_i$ is a disjoint union of circles. The number of circles in each fiber is constant in each interval of $\sqcup_{i=0}^{2g}I_i=\mathbb{R}\setminus\{x_1,\ldots,x_{2g}\}$ and generates cylinders of closed leaves. In each unbounded interval it corresponds to one of the cylinders defined by a pole. Since all zeroes are simple, the number of cylinders changes at each $x_i$ by a unit: on one side there is one cylinder approaching, on the other, two. Hence, over $I_1$ and $I_{2g-1}$ there are two connected components of the fiber. 
 
 The graph of horizontal cylinders of $\omega$ is the trivalent connected graph having $2g$ vertices (one for each zero) and we put an edge between two zeroes if there is a cylinder of closed leaves that has both zeroes in the boundary. (see Figure \ref{fig:graphofcylinders2} for an example).
 \begin{figure}[h]
     \centering
     \includegraphics[height=7cm]{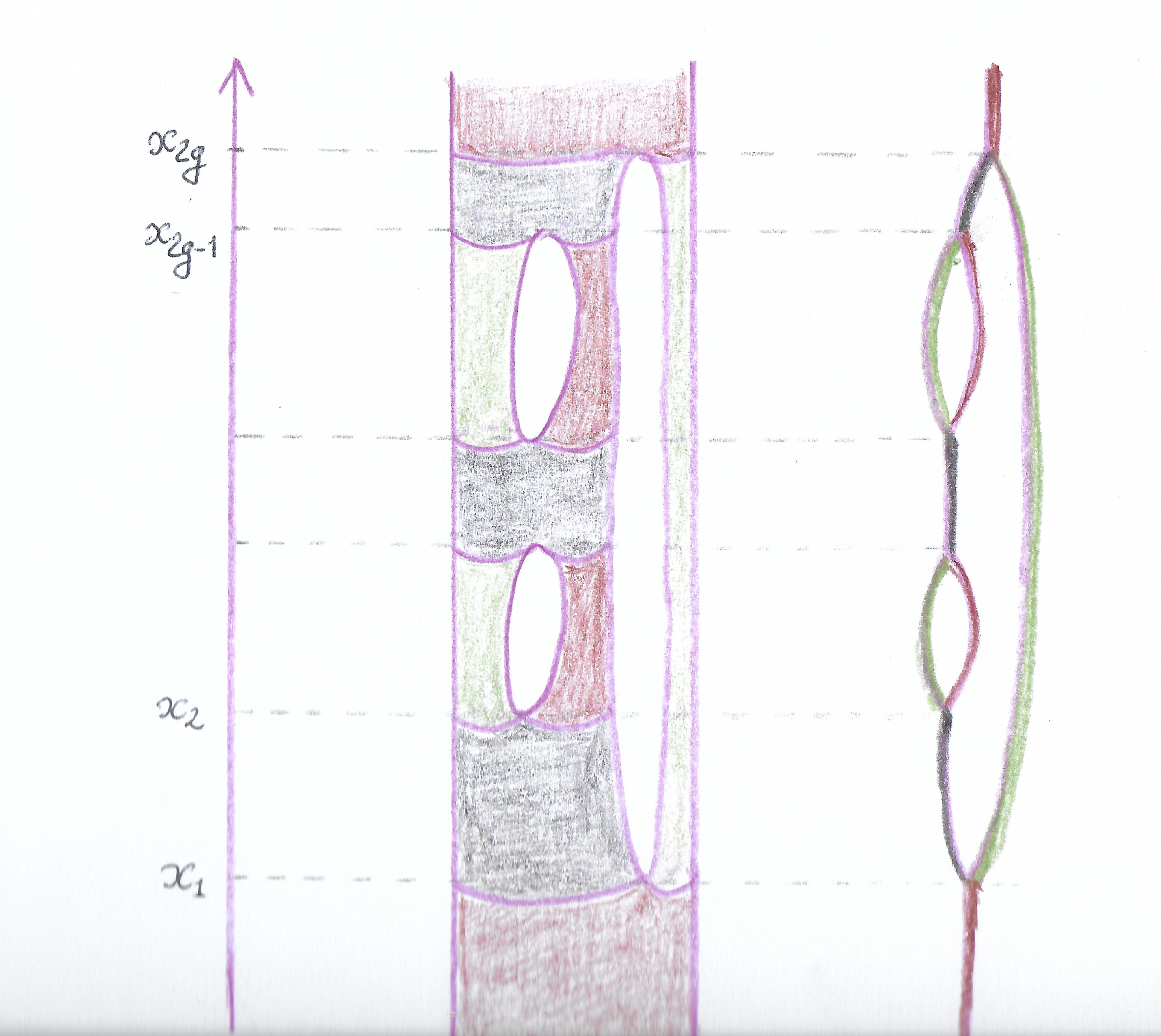}
     \caption{An example of the graph of cylinders}
     \label{fig:graphofcylinders2}
 \end{figure}

 If there is an intermediate segment $I_2,\ldots,I_{2g-2}$ for which the fiber $h_{\omega}^{-1}(I_j)$ is one cylinder we will be done. 
We will analyze how the graph of horizontal cylinders changes along a movement of zeroes, via Schiffer variations and verify that up to this further surgery we fall in the desired situation.

Let us analyze the change of graph we can obtain by applying a particular Schiffer variation at a zero \(z_0\) that has two components going upwards. We denote by \(v_0\) the vertex of the cylinder graph corresponding to \(z_0\). Among the two edges \(e_0, e_0 '\) that go from \(v_0\) upward, one has smallest length, say \(e_0\). Its other extremity is denoted \(v_1\). Let \(e_1\) be an edge going upward at \(v_1\). There exists a pair of twins at \(z_0\) whose projections to the cylinder graph are 
\begin{enumerate}
    \item the concatenation of \(e_0\) with a small interval in \(e_1\), 
    \item a portion of \(e_0\). 
\end{enumerate}
Indeed, first choose a geodesic starting at \(z_0\) whose projection to the cylinder graph satisfies (1), and then take its twin. The Schiffer variation along this pair of twin produces a new meromorphic differential whose cylinder graph is depicted in Figure \ref{fig:graphaftershiffer}.  

\begin{figure}[h]
     \centering
     \includegraphics[height=6cm]{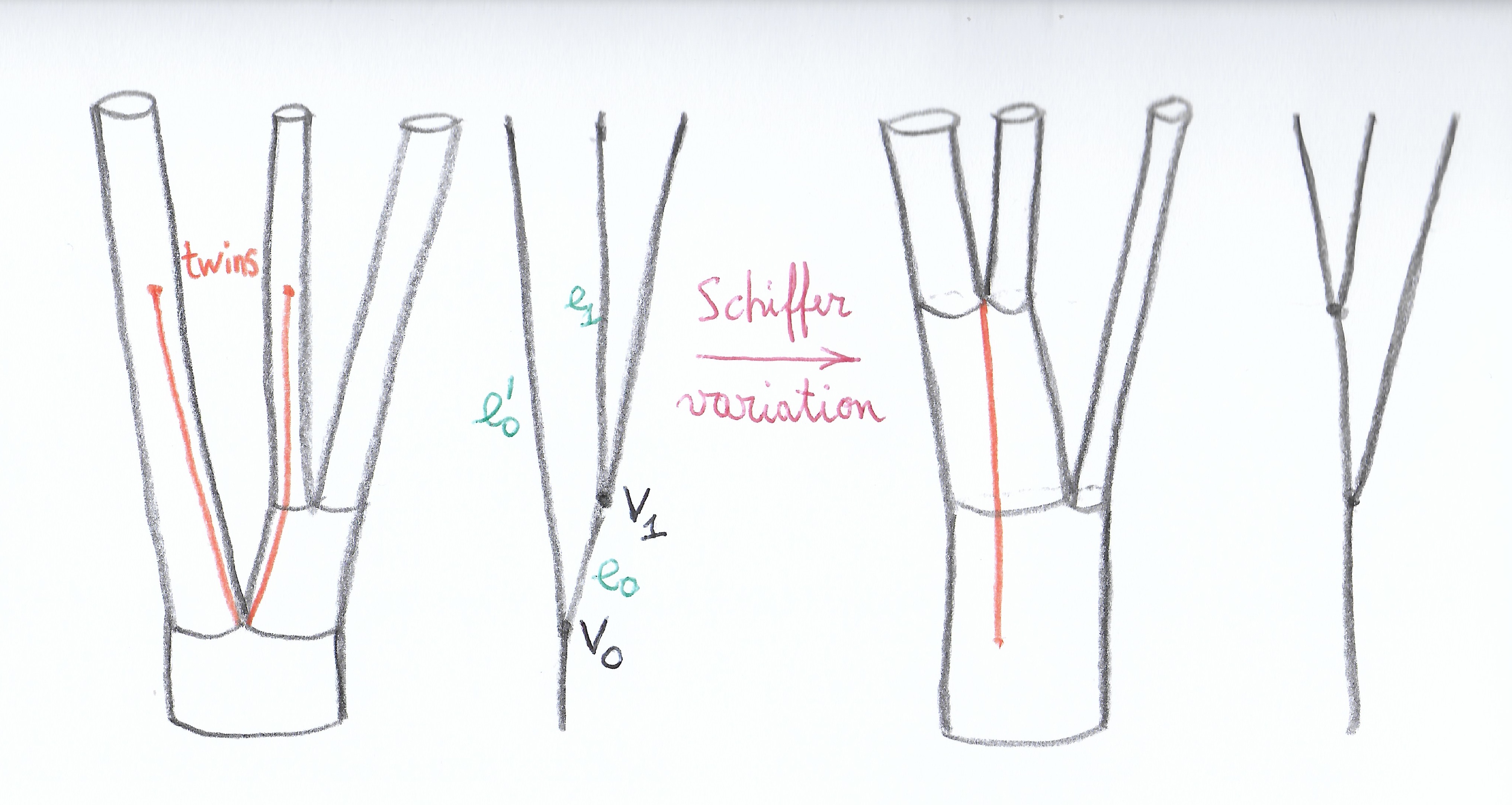}
     \caption{Graph changes when a zero moves upward.} 
     \label{fig:graphaftershiffer}
 \end{figure}


 Choose two paths in the graph, each leaving the top vertex of the cylinder graph on different edges, and choose any way of going down the graph with each of them. At the first moment they meet (they always do, since the graph ends at a unique vertex) we find a point $y_0$ that has two edges upward. Let \(v\) be the number of vertices that we find in the union of the paths.  If $v=2$, we are done, since there is a cylinder just under $y_0$. Use the choice of paths to move $y_0$ up the graph. As explained in Figure \ref{fig:graphaftershiffer}, we can move the point $y_0$ upwards along the given choice of paths. At each step we transform the cylinder graph, and reduce the number $v$ by one. Proceeding inductively we reach the situation where this number is two.

\subsection{Degeneration in the case of non-real periods}



The proof works in a more general case. Suppose $(C,m,\omega)$ is a meromorphic form with simple poles on a smooth connected curve, whose residues at the poles lie in a real line. Suppose further that there is some period not lying in the same real line. In particular this is the case for the case of two simple poles of residues $\pm 1$ and non-real periods. 

We claim that by Schiffer variations we can either find a pair of distinct non-closed twin paths with common end-point (at a zero) or join all zeroes to a point. By changing the point locally in the period fiber using period coordinates in the stratum (see Remark 3.8 in \cite{boissy}), we can suppose that the coordinates corresponding to saddle connections between distinct zeroes of $\omega$ have different values in $\mathbb{C}$, and, moreover,  none coincides with the period of a cycle, i.e. lies in the (countable) image of the period map $\per(C,m,\omega)$ in $\mathbb{C}$. Consider one of the shortest saddle connection, and the set of all its twins at its starting point. By the choices made so far, the only possibilities for endpoints of any of those paths is a regular point, or the endpoint of the original saddle connection. If one of the twin paths has the same endpoint, we have found a pair of twins whose endpoints coincide. Otherwise, the Schiffer variation along the set of all twins at the point produces a meromorphic differential with one zero less (of higher order).  Continuing the process with the obtained form inductively we end up in one of the desired situations. 
 If we have found the couple of twins between distinct zeroes, the Schiffer variation will degenerate the surface to a node with zero residue and no zero components and we are done.

Next we claim that if the form has a single zero, we can also find, up to some Schiffer variations, a pair of twins having the same endpoints (but this time the starting point and endpoints might coincide also). Indeed, consider the horizontal foliation induced by $\omega$ on $C$. Around each pole it has an open cylinder of infinite volume formed by closed regular geodesics. The complement $\mathcal{U}\subset C$ of the cylinders in $C$ has finite volume. It cannot be empty since the form $\omega$ has some non-real period. The boundary is formed by a collection of saddle connections at the unique saddle of $\omega$.  According to \cite[Proposition 5.5]{Tahar1} in each connected component of $\mathcal{U}\setminus\{\text{saddle connections}\}$ the real foliation is either a finite volume cylinder formed by closed regular geodesics, or minimal. We claim that, up to changing the direction of the foliation we can find a cylinder by closed regular geodesics. Indeed, if there are none for the real foliation, we can apply Lemma 5.12 in \cite{Tahar1}  to one of the minimal components and find a finite volume cylider in $\mathcal{U}$ formed by closed regular geodesics (necessarily in a different direction). 

The boundary curves $b^+$ and $b^-$ of a finite volume cylinder by closed geodesics have the same length. Each defines a finite sequence of saddle connections based at the zero of $\omega$. In this situation up to some Schiffer variations, we can always find a pair of twin curves having the same endpoints. It suffices to carry exactly the same proof by cases of section 5.3 in \cite{CDF2}. As in that case, the Schiffer variation along this pair of twins degenerates the surface to an isoperiodic stable form  with no zero components and a node of zero residue.

\subsection{From boundary points to simple boundary points}

\begin{proposition}\label{p:degenerating to simple boundary point}
Let $g\geq 2$. Then any form in $\Omega_0^{\pm*}\mnc_{g,\stkout{2}}$ with period homomorphism of degree at least three is equivalent to a simple boundary point. 
\end{proposition}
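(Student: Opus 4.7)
The plan is to reach a simple boundary point by iteratively applying Proposition \ref{p:degeneration2}, pushing the form into ever more degenerate stable configurations until the desired chain structure is attained. Starting from the given form, the first step is to apply Proposition \ref{p:degeneration2} to obtain an isoperiodic equivalent with at least one node.

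Next I would iterate this degeneration on every smooth sub-part of genus at least two. Any such intermediate sub-part is a smooth curve with at least two distinguished marked points (original poles or preimages of nodes); restricting attention to two of them whose residues are $\pm 1$, Proposition \ref{p:degeneration2} applies and produces a further node. Each iteration either yields a pole-separating separating node (in which case the chain is extended) or a zero-residue node (possibly non-separating, or separating but not pole-separating). For a zero-residue non-separating node I would perform an additional Schiffer surgery on the normalization, which is a smooth curve of genus $g{-}1$ with four marked points, so as to replace it by a separating node on the glued curve; an analogous local rearrangement converts a separating non-pole-separating node into a pole-separating one by transporting a pole across. After finitely many steps the underlying curve is of compact type and every node separates the two poles, yielding a chain $\omega_1\vee\cdots\vee\omega_k$.

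The final step is to ensure that each part of the chain has degree at least three. If some $\omega_i$ has degree at most two, I would smooth the node joining it to an adjacent part, producing a smooth merged piece, and then re-degenerate that merged piece along a $[p]$-admissible decomposition whose factors have degree at least three, whose existence on the merged homology is provided by Lemma \ref{l:existence of adm decomp} and relies crucially on the hypothesis $\deg p \geq 3$. The main obstacle I anticipate is precisely in this last step: after a local re-splitting we need to preserve the pole-separating chain structure globally and avoid creating new low-degree parts elsewhere. I expect this to be controlled by a termination argument based on a complexity measure such as the total number of low-degree parts, together with a careful selection at each step of the admissible decomposition via Corollary \ref{c:padmissible decompositions exist}.
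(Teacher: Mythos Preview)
Your outline has the right overall shape but contains two genuine gaps, both of which the paper's proof handles with care.

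\textbf{Converting non-separating nodes.} You write that for a zero-residue non-separating node you ``would perform an additional Schiffer surgery on the normalization \ldots\ so as to replace it by a separating node on the glued curve.'' This is the hardest step in the argument and cannot be dispatched in a sentence. The normalization has genus $g-1$ with two extra marked points (the preimages of the node) at which the form is \emph{holomorphic}, not polar; there is no obvious local move that turns this back into a pole-separating separating node while staying isoperiodic. The paper's treatment (its Subcase 2.2) is quite specific: one deforms the nodal form inside its stratum by pushing the two node-preimages along a generic direction of the geodesic flow until both land in the infinite cylinder of one of the poles, and then a closed core geodesic of that cylinder separates off a genus-one piece carrying the non-separating node from a smooth piece carrying both poles. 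Your sketch does not supply such a mechanism, and a generic Schiffer variation will not produce it.

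\textbf{The re-degeneration step is circular as stated.} To fix a degree-two part $\omega_i$ you propose to smooth it into an adjacent part and then ``re-degenerate that merged piece along a $[p]$-admissible decomposition whose factors have degree at least three, whose existence \ldots\ is provided by Lemma \ref{l:existence of adm decomp}.'' But that lemma only produces \emph{some} stable form realizing the decomposition; it says nothing about connecting \emph{your} merged form to it isoperiodically. That connection is exactly the content of the proposition you are proving, in a lower genus. The paper resolves this by running an induction on the genus: the base case $g=2$ uses the already-established connectedness of $\per_2^{-1}(p)$ (Corollary \ref{c: connectedness genus two}), and for $g\geq 3$ the inductive hypothesis is invoked each time a smooth part of genus between $2$ and $g-1$ with degree at least three must be pushed to a simple boundary point. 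Without setting up this induction, your ``re-degenerate'' step begs the question. Your anticipated obstacle (a termination argument on the number of low-degree parts) is real, but the missing ingredient is not a complexity measure; it is the inductive hypothesis itself.
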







\begin{proof}
By induction on the genus. 
Let $p:H_1(\Sigma_2,\mathbb{Z})\rightarrow \mathbb{C}/\mathbb{Z}$ with $\deg (p)\geq 3$. Since $\per_2^{-1}(p)$  is connected by Corollary \ref{c: connectedness genus two} it is enough to prove that there exists a simple boundary point in the bordification. 

By Lemma \ref{l:existence of adm decomp} there exists a decomposition $H_1(\Sigma_2,\mathbb{Z})=W_1\oplus W_2$ into rank two symplectic submodules such that each $p(W_i)$ has at least three elements. Up to identifying $W_i$ with $H_1(\Sigma_1,\mathbb{Z})$,  $p_{|W_i}$ can be thought as an element in $H^1(\Sigma_1,\mathbb{C}/\mathbb{Z})$ of degree at least three. By Theorem \ref{t:realization} we can consider a form $\omega_i$ on a smooth curve with two poles and periods $p_{|W_i}$.
Hence  $\omega_1\vee\omega_2$ is a simple boundary point of the fiber $\per_2^{-1}(p)$.

Fix some $g\geq 3$ and suppose that the statement of the proposition is true up to genus $g-1$. Consider $p:H_1(\Sigma_g,\mathbb{Z})\rightarrow \mathbb{C}/\mathbb{Z}$ with $\deg (p)\geq 3$. 

Along this proof, and by abuse of notation, we will denote the stable form obtained by gluing two meromorphic forms $\omega_1$,$\omega_2$ on smooth curves $C_1,C_2$ glued at points that are poles or not by $\omega_1\vee\omega_2$ and omit the information of the points where they are glued. A smoothing of such a form will be denoted by $\omega_1\widetilde{\vee}\omega_2$. 
\vspace{0.5cm}

\underline{Case 1:} Suppose $\im (p)\subset \mathbb{R}$ \\
Up to applying Proposition \ref{p:degeneration2} we can suppose that we start at a form $\omega=\omega_1\vee\omega_2$ with precisely one node that separates the surface and has one pole on each side. The decomposition of the form provides a decomposition $p=p_1\oplus p_2$. Suppose that one of $p_i$ has degree two. Since $\deg p\geq 3$ , we cannot have both of degree two. If the component which is not of degree two, say $\omega_1$,  is of genus at least two, we can use the hypothesis to join  $\omega_1$ to a nodal stable form $\omega_3\vee\omega_4$ with non-zero residue at the node, and each part of which has periods of degree at least three. The form $\omega_2\vee\omega_3$ has periods of degree at least three, and so does $\omega_4$. Then $\omega\sim(\omega_2\widetilde{\vee}\omega_3)\vee\omega_4$. 

If the form $\omega_1$ (of degree at least three) has genus one, then $\omega_2$ has genus $g-1$ and degree two. We can still find an equivalent $\omega_3\vee\omega_4$ each part of which will have degree two. However, the sum $\omega_1\vee\omega_3$ will have degree at least three and genus at least two. The previous case can be applied to the form  $(\omega_1\widetilde{\vee}\omega_3)\vee\omega_4$, that is equivalent to $\omega$.\\
\vspace{0.4pt}

\underline{Case 2:} Suppose $p$ has a non-real number in the image.

\vspace{0.5pt}

\underline{Subcase 2.1} From a non-simple boundary point to either a simple boundary point or a form on a curve with a non-separating node with zero residue. \\

Up to applying Proposition \ref{p:degeneration2} we can suppose that we start at a form $\omega$  with precisely one node with zero residue. If it is non-separating we are done. Suppose the node is separating and write $\omega=\omega_1\vee\omega_2$. 
Let  $g_i$ be the genus of $\omega_i$.
One of the parts, say $\omega_2$, has no poles and the other has two poles. Therefore  $\omega_2$ has infinite degree. 

If $g_1\geq 2$ and $\omega_1$  has degree at least three then we can connect $\omega_1$ to a simple boundary point $\omega_3\vee\omega_4$ in genus $g_1$. The form $\omega$ is therefore connected to $\omega_3\vee(\omega_4\widetilde{\vee}\omega_2)$ which has a separating node with non-zero residue and degree at least three on each part. It corresponds to a simple boundary point. 

If $g_1=1$ then $g_2\geq 2$ and we can connect $\omega_2$ to a stable holomorphic form on a nodal curve by an isoperiodic path  (see section degeneration in \cite{CDF1}). If the node is non-separating we are in the second possibility of subcase 2.1. Otherwise, we can write this nodal curve as $\omega_3\vee\omega_4$ both holomorphic with non-zero components. The form $\omega$ is connected to $(\omega_1\widetilde{\vee}\omega_3)\vee \omega_4$ which falls in the previous case. 

 Next we suppose that  $\omega_1$ has period map of degree two. Up to using Lemma \ref{p:degeneration2} repeatedly to $\omega_1$ we can further suppose $g_1=1$ and therefore $g_2\geq 2$. Since the periods $p$ are not all real,  $\omega_2$ has non-real periods and has infinite degree. If it has no poles we fall in a previous case. If it has poles, we can use the hypothesis to connect $\omega_2$ to $\omega_3\vee\omega_4$ with both parts of degree at least three and two poles. 
 The form $(\omega_1\widetilde{\vee}\omega_3)\vee\omega_4$ is a simple boundary point that can be connected to $\omega$ by an isoperiodic path.

 \vspace{0.5cm}
\underline{Sub-case 2.2}
From a form with zero residue on a non-separating node to a simple boundary point. \\

Given a form without zero components $\omega$ with some non-separating nodes of zero residue, we denote by $\widetilde{\omega}$ some form obtained by smoothing the non-separating nodes of  $\omega$.

Let $(C,m,\omega)$ be  a form on a curve with a single non-separating node of zero residue. We claim first that $\omega$ is equivalent to a stable form  $\omega_1\vee\omega_2$ where $\omega_1$ is a meromorphic stable form with two poles on a singular curve of genus one (containing the non-separating node of zero residue) and $\omega_2$ is a meromorphic form on a smooth curve with two poles. This can be achieved if we find a smooth closed geodesic for $\omega$ that separates the surface $C$ in two parts with the said properties. We claim that up to deforming $\omega$ by an isoperiodic path in its stratum we can find such a geodesic. Indeed, consider a simple closed curve $\gamma$ on $C$ with base-point at the node that defines a non-trivial homology class in $H_1(C,\mathbb{Z})$. Let $\gamma_0$ denote the path with distinct endpoints obtained in the normalization $(C_0,m_0,\omega_0)$ of $(C,m,\omega)$. Any continuous deformation $\gamma_t$ of $\gamma_0$ in the space of paths in $C_0$ with distinct endpoints such that $t\mapsto\int_{\gamma_t}\omega_0$ is constant allows to define an isoperiodic path starting at $(C,m,\omega)$. It suffices, for each $t$, to glue the endpoints of the path $\gamma_t$ to obtain a curve with a single non-separating node of zero residue $(C_t,\omega_t)$ and follow the same rule that marked $C_0$ from $C$ to mark the homology $C_t$ from $C_0$. By construction, geodesics of $\omega_0$ are sent to geodesics of $\omega_t$ for each $t$. Therefore, if me manage to find such a deformation $\gamma_t$ for which both endpoints lie in the cylinder around one of the poles of $\omega_0$ we will be able to find the closed geodesic as one of the geodesics of the cylinder leaving both endpoints at the same side of the pole. Consider the oriented directional foliation in $C_0$ defined by $\omega_0$ on a generic direction in $\mathbb{C}$ that is not tangent to the real line and such that the endpoints $q_1,q_2$ of $\gamma_0$ do not belong to saddle connections (see \cite{Strebel}{, section 11.4}). All geodesics in that direction that are not saddle connections converge to the same pole under the geodesic flow $G_t$ of the chosen direction. Let $t_1$ be a time such that $G_{t_1}(q_i)$ belongs to the interior of the infinite cylinder around that pole for $i=1,2$. Then for each $t\in[0,t_1]$, the path $\gamma_t$ obtained by joining $G_t(q_1)$ with $q_1$ along the geodesic, $\gamma_0$ and then $q_2$ with $G_t(q_2)$ along the geodesic, satisfies $\int_{\gamma_t}\omega_0=\int_{\gamma_0}\omega_0$ for all $t\in[0,t_1]$ and $\gamma_t$ has distinct endpoints. 

If $\omega_2$ has a period map of degree at least three we can write $\omega_2\sim\omega_3\vee\omega_4$ with each part of degree at least three and with non-zero residues. The form $\omega$ is equivalent to the form $(\widetilde{\omega_1}\widetilde{\vee}\omega_3)\vee\omega_4$ which determines a simple boundary point. 

If $\omega_2$ has degree two, then $\omega_1$ cannot have degree two (otherwise $\omega$ would). Write $\omega_2\sim\omega_3\vee\omega_4$ where now $\omega_3$ and $\omega_4$ have degree two. The form $\omega_1\vee\omega_3$ has periods of degree at least three over a curve of genus at least two. By inductive hypothesis it is equivalent to a form $\omega_5\vee\omega_6$  where each part has degree at least three. The initial form is equivalent to $\omega_5\vee(\omega_6\widetilde{\vee}\omega_4)$ has parts of degree at least three and non-zero residue. Hence determines a simple boundary point.

\end{proof}

\section{Sufficient conditions for the connectedness of the set of simple boundary points of a period fiber}

 \label{s:sufficient conditions}
\subsection{Equivalence of simple boundary points inducing the same factors} 


\begin{proposition}\label{p:equivalence with same factors}
Let $g\geq 3$ and $p\in H^{1}(\Sigma_{g},\mathbb{C}/\mathbb{Z})$ of degree at least three. Suppose Theorem \ref{t:Cgn} is true up to genus $g-1$.   Then any pair of $p$-simple boundary points  inducing the same factors $V_{1},\ldots, V_{k}$ in the decomposition of $H_{1}(\Sigma_{g})$ (maybe in different order) lie in the same connected component of the set of $p$-simple boundary points of the period fiber. 
\end{proposition}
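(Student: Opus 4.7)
The plan is to prove the proposition by induction on $g$, assuming Theorem \ref{t:Cgn} for genera $<g$. The argument naturally splits in two halves: first connect two $p$-simple boundary points that induce the \emph{same} ordered decomposition, then reduce an arbitrary permutation of the factors to that case via adjacent transpositions.

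For the matching-ordering step, given two $p$-simple boundary points $\omega_1\vee\cdots\vee\omega_k$ and $\omega_1'\vee\cdots\vee\omega_k'$ that induce the same ordered decomposition $(V_1,\ldots,V_k)$, each pair $(\omega_i,\omega_i')$ lies in a fiber $\per_{g_i}^{-1}(p|_{V_i})$ of a lower-genus moduli space (with $2g_i=\text{rank}\,V_i$). Because $p$-simplicity forces $p|_{V_i}$ to have degree at least three, Theorem \ref{t:Cgn} in genus $g_i<g$ produces a continuous isoperiodic path $\omega_i(t)$. Running these $k$ paths in parallel and applying the attaching map \eqref{eq:attaching forms at torelli level} yields the desired isoperiodic path inside the set of $p$-simple boundary points.

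For the permutation step, I will treat adjacent transpositions $V_i\leftrightarrow V_{i+1}$, which generate the symmetric group $S_k$. In the case $k\geq 3$, I will smoothen the node between $\omega_i$ and $\omega_{i+1}$ to obtain a form $\omega_1\vee\cdots\vee\omega_{i-1}\vee\eta_0\vee\omega_{i+2}\vee\cdots\vee\omega_k$ with $\eta_0$ in the smooth fiber $\per_{g_i+g_{i+1}}^{-1}(p|_{V_i\oplus V_{i+1}})$. Since $k\geq 3$ forces $g_i+g_{i+1}\leq g-1$, the inductive hypothesis together with Corollary \ref{c:conn of closure equiv to conn} guarantees that this fiber and its bordification are connected, so I can continuously deform $\eta_0$ to a smoothing $\eta_1$ of the reverse-order nodal form $\omega_{i+1}'\vee\omega_i'$ and then degenerate $\eta_1$ to that nodal limit. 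The unchanged parts keep their degrees, and the merged factor has degree $\geq\deg(p|_{V_i})\geq 3$, so we never leave the set of $p$-simple boundary points. The case $k=2$ is reduced to $k=3$ as follows: since $g\geq 3$, one factor (say $V_1$) has rank $\geq 4$, and Corollary \ref{c:padmissible decompositions exist} applied to the non-trivial class $[p]|_{V_1}$ produces a $[p]$-admissible splitting $V_1=V_a\oplus V_b$. Using the connectedness of the closure of $\per_{g_1}^{-1}(p|_{V_1})$ in lower genus, I degenerate $\omega_1$ to a simple boundary point $\omega_a\vee\omega_b$, landing at a three-factor $p$-simple boundary point with decomposition $(V_a,V_b,V_2)$; the $k=3$ step then reorders this to $(V_2,V_b,V_a)$, after which smoothening the node between $V_b$ and $V_a$ recombines them into $V_1$ and yields the desired decomposition $(V_2,V_1)$.

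The main obstacle throughout the proof is to guarantee that each intermediate configuration genuinely remains a $p$-simple boundary point: every node must separate the two marked poles, and every part must carry a restriction of $p$ of degree at least three. The first condition is preserved because the chain of factors along the segment is maintained during all smoothenings and controlled degenerations introduced; the second is preserved because the algebraic admissibility propagates under the merges that occur during smoothening (degrees can only grow) and under the splits we introduce (furnished by Corollary \ref{c:padmissible decompositions exist}). Verifying that the degenerations used (e.g., the transition from smooth $\eta$ to a nodal limit along a prescribed segment structure) can be carried out in the bordification is where the local transversality result of Theorem \ref{t:local structure special cover} is invoked.
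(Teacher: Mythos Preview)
Your argument is correct and follows the same two-step plan as the paper: connect boundary points with identical ordered decomposition via the attaching map \eqref{eq:attaching forms at torelli level} and the inductive connectedness of each $\per_{g_i}^{-1}(p|_{V_i})$, then reduce reorderings to adjacent transpositions. The transposition step, however, is organized differently. The paper first applies Proposition~\ref{p:degenerating to simple boundary point} to every part to reduce to the case where all factors have genus one (so that $k=g\ge 3$ automatically), and then swaps two adjacent genus-one pieces by invoking only the genus-two case (Corollary~\ref{c: connectedness genus two}): the resulting genus-two isoperiodic path $\eta_t$ may leave the boundary in genus two, but once wedged with at least one other part it stays among $p$-simple boundary points in genus $g$. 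You instead merge the two adjacent factors in place and use Theorem~\ref{t:Cgn} directly at genus $g_i+g_{i+1}\le g-1$, which forces you to handle $k=2$ by an auxiliary splitting of one factor via Corollary~\ref{c:padmissible decompositions exist}. Both routes are valid; yours avoids invoking the somewhat involved Proposition~\ref{p:degenerating to simple boundary point}, while the paper's avoids the $k=2$ case distinction.

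One imprecision to fix: in your $k=2$ reduction you appeal to connectedness of the \emph{closure} of $\per_{g_1}^{-1}(p|_{V_1})$, but an arbitrary path in that closure may pass through boundary points that are not $p|_{V_1}$-simple (for instance with a non-separating node or a part of degree two), and their wedge with $\omega_2$ would leave the set of $p$-simple boundary points in genus $g$. You should argue exactly as you already do for $k\ge 3$: move $\omega_1$ through the \emph{smooth} fiber $\per_{g_1}^{-1}(p|_{V_1})$ to a smoothing of $\omega_a\vee\omega_b$, and only then degenerate locally.
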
 

\begin{proof} 
The homomorphism  $p_{i}:=p_{|V_{i}}$ has degree at least three for each $i$.  Identify $V_{i}$ with the homology group $H_{1}(\Sigma_{g_{i}})$ for the appropriate $g_{i}<g$. By Theorem \ref{t:Cgn} applied to $p_i$,  the fiber $\per_{g_{i}}^{-1}(p_{i})$ is connected. 

Suppose first that the decompositions associated to the two simple boundary points have the  factors in the same order.  Then, both lie in the image of the same (restricted) attaching map $$\per_{g_{1}}^{-1}(p_{1})\times\cdots\times\per_{g_{k}}^{-1}(p_{k})\rightarrow \Omega^{\pm}\overline{\mathcal{S}}_{g,\stkout{2}}$$
that attaches following the same rule of attaching of the given forms: the $(+1)$-pole of a part is glued to the $(-1)$-pole of the next. Any form in the image has some node and period homomorphism $p_1\oplus p_2\oplus\cdots\oplus p_k=p$. Since the source is connected both points belong to the same connected component of $\percompact^{-1}(p)$ and are therefore equivalent.

Next we claim that  changing the order of the parts of a given form $\omega_{1}\vee\cdots\vee\omega_{k}$ of genus at least three with parts of degree at least three defines forms that lie in the same connected component of the boundary of $\per^{-1}(p)$. Up to applying Proposition \ref{p:degenerating to simple boundary point} to each of the parts and using \eqref{eq:attaching preserve equivalence} we can suppose that all the parts of the form have genus one. It suffices to check that when $g\geq 3$ we can change the order of two consecutive factors of genus one without leaving the boundary to deduce the result. 

Now, for any pair of forms $\omega_{1}, \omega_{2}$ of genus one with two poles of residues $\pm 1$, we know by Corollary \ref{c: connectedness genus two} that there is an isoperiodic path $\{\eta_{t}\}_{t\in[1,2]}$ joining $\omega_{1}\vee\omega_{2}$ with $\omega_{2}\vee\omega_{1}$. This path will, in general, leave the boundary. 
However for any other pair of forms $\omega, \eta$ (not both zero) with two poles of residues $\pm 1$ the path $\eta\vee\eta_{t}\vee \omega$ joins $\eta\vee\omega_{1}\vee\omega_{2}\vee\omega$ with $\eta\vee\omega_{2}\vee\omega_{1}\vee\omega$ along $p$-simple boundary points.

\end{proof}
\subsection{Proof of Proposition \ref{p:algebrization}}
It remains to find equivalences between simple boundary points whose associated decompositions have distinct factors.


The realization of decompositions given by Lemma \ref{l:existence of adm decomp} allows to algebraize the problem completely. Recall the definition of admissible decompositions given in Definition \ref{def:p-admissible}. 


By smoothing all except for one node, we reduce the problem to finding equivalences between simple boundary points with one node. For each such form we can associate a vertex of the graph of $[p]$-admissible submodules, namely the vertex corresponding to the pair $\{V,V^{\perp}\}$ where $V\oplus V^{\perp}=H_1(\Sigma_g)$ is the decomposition induced by the simple boundary point. By Proposition \ref{p:equivalence with same factors}, two simple boundary points having the same associated vertex are equivalent. It suffices to find equivalences between distinct vertices. 

If two simple boundary points are associated to distinct vertices $\{V_1,V_2\}$ and $\{V_1',V_2'\}$  that are joined by an edge in the graph of $[p]$-admissible decompositions, it means that there exists a decomposition $W_1\oplus\cdots\oplus W_k$ with at least 
three factors one of which is in the first pair and another in the other pair. Using Lemma \ref{l:existence of adm decomp}, let $\omega_1\vee\ldots\vee\omega_k$ be a simple boundary point of period homomorphism $p$ whose associated decomposition is $W_1\oplus\cdots\oplus W_k$. By smoothing the nodes appropriately we can find an equivalence between this form and a form having the factors $\{W_j,W_j^{\perp}\}$ for any $j=1,\ldots,k$ and period homomorphism $p$. Hence the  two simple boundary points are equivalent.

The connectedness of the graph of $[p]$-admissible decompositions implies that all simple boudary points are equivalent.

\section{Connectedness of graphs of $p$-admissible decompositions}
\label{s:algebraic results}
In this section we prove Theorem \ref{t:graph of admissible}. 

 We consider \(V\) a symplectic unimodular module over \(\Z\), of rank at least four, and \(p : V\rightarrow A \) a non zero morphism to an abelian group. (Ultimately, this morphism will be the reduction modulo \(\Z\) (resp. modulo \(\frac{1}{2} \Z\)) of the period of a meromorphic differential with two poles around which the periods are \(1\) and \(-1\), so \(A= \C/\Z \text{ or }\frac{\C}{\frac{1}{2}\Z}\)) 

Recall the definition of the graph of $p$-admissible decompositions and elements in Definitions \ref{def:p-admissible} and \ref{def:p-admissible element}. To lighten the notation we introduce an equivalence relation among $p$-admissible submodules. 

\begin{definition}
Two $p$-admissible submodules $V_1$ and $V_2$ of $V$ are said to be equivalent and denoted $V_1\sim V_2$ if the vertices corresponding to $V_1\oplus V_1^{\perp}$ and $V_2\oplus V_2^{\perp}$ lie in the same connected component of the graph of $p$-admissible submodules.  
\end{definition} 

We start with a corollary of Lemma \ref{l: admissible element}: 
\begin{corollary}\label{c: reduction to rank two}
Any \(p\)-admissible symplectic decomposition is equivalent to one with a rank two factor.
\end{corollary}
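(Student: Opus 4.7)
My approach is to take an arbitrary $p$-admissible two-factor decomposition $V = W_1 \oplus W_2$ (the vertex to be modified), assume neither $W_1$ nor $W_2$ already has rank two (otherwise there is nothing to prove), and then produce an adjacent vertex in the graph that does contain a rank-two factor, by refining $W_1$ via Lemma \ref{l: admissible element}. The strategy is the natural one: any ``bridge'' from one vertex to another in the graph is a $p$-admissible decomposition with at least three factors, so we look for such a three-factor refinement.

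The first step is to apply Lemma \ref{l: admissible element} to the restriction of $p$ to the symplectic unimodular module $W_1$, which has rank at least four and on which $p$ is non-trivial by admissibility of the starting decomposition. This produces a rank-two symplectic submodule $U_1 \subset W_1$ that arises as a factor of a $p|_{W_1}$-admissible decomposition $W_1 = U_1 \oplus U_1'$, with both $p|_{U_1}$ and $p|_{U_1'}$ non-zero.

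The second step is to combine this refinement with the original decomposition to obtain the three-factor decomposition $V = U_1 \oplus U_1' \oplus W_2$, which is $p$-admissible since all three restrictions are non-zero (the third one by the hypothesis on the original vertex). This decomposition shares the factor $W_2$ with the vertex $V = W_1 \oplus W_2$, and it shares the factor $U_1$ with the vertex $V = U_1 \oplus (U_1' \oplus W_2)$. By the definition of edges in the graph of $p$-admissible decompositions, these two vertices are therefore joined by an edge, and since the latter vertex has a rank-two factor $U_1$, the claimed equivalence is established.

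I do not expect any genuine obstacle here: the entire content is a direct bookkeeping exercise built on top of the rank-two refinement produced by Lemma \ref{l: admissible element}, and the only delicate part --- extracting a rank-two $p$-admissible factor inside a symplectic submodule of rank at least four with non-trivial restriction of $p$ --- has already been carried out in that lemma.
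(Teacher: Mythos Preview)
Your argument is correct and is exactly the intended one: the paper states this as an immediate corollary of Lemma~\ref{l: admissible element} without further proof, and the refinement you describe (split a factor of rank $\geq 4$ using the lemma, then use the resulting three-factor $p$-admissible decomposition as the bridging edge) is precisely what makes the corollary immediate. Nothing to add.
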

The rest of the section will be devoted to find equivalences between rank two $p$-admissible decompositions. 

\subsection{Forcing intersection}

\begin{lemma}\label{l: creating intersection}
Assume \(\text{rank} (V) \geq 6\). Then, given any pair \(W, W' \) of  \(p\)-admissible submodules of rank two, there exists another pair \( W_1  , W_1' \) which is such that 
\begin{itemize}
\item \(W_1\) and \(W_1'\) are \(p\)-admissible submodules of rank two,
\item \(W_1 \cap W_1 '\neq \{0\}\), 
\item \(W_1\sim W\) and  \(W_1'\sim W '\).
\end{itemize}
\end{lemma}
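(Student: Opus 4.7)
If $W\cap W'\neq\{0\}$, set $W_1=W$ and $W_1'=W'$; there is nothing to do. Otherwise assume $W\cap W'=\{0\}$. The plan is to keep $W_1'=W'$ and to slide $W$, through the graph of $p$-admissible decompositions, to a rank-$2$ $p$-admissible submodule $W_1$ sharing a nonzero vector with $W'$. By Lemma \ref{l: admissible element}, the set $NA(p)$ of non-$p$-admissible elements of $V$ is a submodule of rank at most one, while $W'$ has rank two, so we can pick a $p$-admissible element $v\in W'$. The core aim is then to produce a rank-$2$ $p$-admissible symplectic submodule $A\subset W^\perp$ containing $v$, together with a rank-$2$ $p$-admissible $B$ such that $V=W\oplus A\oplus B$ is a $p$-admissible three-factor decomposition: such a decomposition realises an edge between $W\oplus W^\perp$ and $A\oplus A^\perp$ in the graph, so $W\sim A$, and setting $W_1:=A$ gives $v\in W_1\cap W'\neq \{0\}$.

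In the sub-case where $v\in W^\perp$, the construction of $A$ is direct. Indeed $W^\perp$ has rank at least four and $p|_{W^\perp}$ is non-trivial by the $p$-admissibility of $W$, so Lemma \ref{l: admissible element} applied inside $W^\perp$ yields a rank-$2$ $p|_{W^\perp}$-admissible submodule $A\subset W^\perp$ containing $v$ --- provided $v$ itself is $p|_{W^\perp}$-admissible in $W^\perp$. The orthogonal complement $B$ of $A$ inside $W^\perp$ is then automatically $p$-admissible, completing the three-factor decomposition. If the initial $v$ should fail the $p|_{W^\perp}$-admissibility inside $W^\perp$, we replace it by another element of $W'$ avoiding the analogous rank-$\leq 1$ obstructing submodule in $W^\perp$ provided again by Lemma \ref{l: admissible element}.

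To handle the general case $v\not\in W^\perp$, we first slide $W$ to an equivalent rank-$2$ $p$-admissible submodule $W''\subset v^\perp$, reducing to the previous sub-case applied to $W''$ in place of $W$. For the slide, write $v=v_1+v_2$ in the symplectic direct sum $V=W\oplus W^\perp$, with $v_1\in W$ nonzero and $v_2\in W^\perp$, and observe that $v^\perp\cap W^\perp=v_2^{\perp_{W^\perp}}$ is a submodule of $W^\perp$ of rank $2g-3\geq 3$. Completing (the primitive part of) $v_2$ to a symplectic basis of $W^\perp$ displays rank-$2$ symplectic direct summands of $W^\perp$ contained in $v_2^{\perp_{W^\perp}}$; Lemma \ref{l: admissible element} inside $W^\perp$ selects one of them, call it $W''$, that is $p|_{W^\perp}$-admissible. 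The three-factor decomposition $V=W\oplus W''\oplus (W\oplus W'')^\perp$ is $p$-admissible and provides the edge $W\sim W''$, with $v\in (W'')^\perp$ by construction. Applying the previous sub-case to $W''$ then produces the desired $W_1$.

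The main obstacle is ensuring that the several $p$-admissibility conditions can be arranged simultaneously: $p$ has to remain non-trivial on each of $W$, $A$, $B$, $W''$, and the intermediate vector $v$ has to lie in the admissible locus of the appropriate restriction of $p$. These constraints are controlled by the rank-$\leq 1$ bound on non-admissible loci from Lemma \ref{l: admissible element}, together with the hypothesis $\text{rank}(V)\geq 6$ (hence $\text{rank}(W^\perp)\geq 4$), which leaves exactly enough room to iterate the construction of rank-$2$ admissible summands once inside $W^\perp$ while avoiding the small obstructing submodules.
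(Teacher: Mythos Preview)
Your approach differs from the paper's: you fix \(W_1'=W'\) and try to slide \(W\) alone, whereas the paper slides both simultaneously. The paper's argument is essentially one line: since \(W^\perp\) and \((W')^\perp\) each have corank two in a module of rank at least six, their intersection has rank at least two, so one can pick an element of \(W^\perp\cap(W')^\perp\) outside the union \(NA(p_{|W^\perp})\cup NA(p_{|(W')^\perp})\) (each of rank at most one by Lemma~\ref{l: admissible element}); that common element then sits in a rank-two \(p_{|W^\perp}\)-admissible factor \(W_1\subset W^\perp\) and in a rank-two \(p_{|(W')^\perp}\)-admissible factor \(W_1'\subset(W')^\perp\), giving \(W\sim W_1\), \(W'\sim W_1'\) and \(W_1\cap W_1'\neq\{0\}\) all at once.

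Your longer route has a genuine gap in Case~2. You claim that Lemma~\ref{l: admissible element} ``selects'' a rank-two \(p_{|W^\perp}\)-admissible symplectic summand \(W''\) of \(W^\perp\) lying inside \(v_2^{\perp_{W^\perp}}\), but that lemma only places an admissible \emph{element} into some rank-two factor; it gives no control on where that factor sits, and such a \(W''\) need not exist for your chosen \(v\). Concretely, take \(\text{rank}(V)=6\) with symplectic basis \(a_1,b_1,a_2,b_2,a_3,b_3\), \(W=\Z a_1\oplus\Z b_1\), \(W'=\Z(a_1+a_2)\oplus\Z(b_1+b_3)\), and \(p\) with \(p(a_1),p(b_2)\neq 0\) and \(p(b_1)=p(a_2)=p(a_3)=p(b_3)=0\). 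Both \(W,W'\) are \(p\)-admissible and disjoint, and \(v=a_1+a_2\in W'\) is \(p\)-admissible in \(V\); but \(v_2=a_2\) has \(v_2^{\perp_{W^\perp}}=\Z a_2\oplus\Z a_3\oplus\Z b_3=\ker(p_{|W^\perp})\), so every rank-two symplectic \(W''\) contained in it has \(p_{|W''}=0\). (Choosing instead \(v=b_1+b_3\) would work here, but your text gives no mechanism for making such a choice.) A similar issue affects your fallback in Case~1: replacing \(v\) by another element of \(W'\) only helps if that element also lies in \(W^\perp\), yet \(W'\cap W^\perp\) may have rank one. Your final paragraph acknowledges that the admissibility constraints must be arranged simultaneously but does not carry this out; the rank-\(\leq 1\) bound on \(NA\) alone is not the right tool for the obstruction in Case~2, which concerns whether \(p\) is nonzero on a specific corank-one submodule of \(W^\perp\).
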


\begin{proof} 
The spaces  \( W^\perp\) and \((W')^\perp\) intersect on a linear subspace of dimension at least two.  In particular, the set  \( \left( W^\perp\cap (W')^\perp\right)  \setminus \left(NA(p_{|W^\perp}) \cup NA(p_{|(W')^\perp})\right) \) is non empty. By Lemma \ref{l: admissible element}, an element in this set belongs to the spaces \(W_1\) and \(W_1'\) of \(p\)-admissible symplectic decompositions \( W^\perp = W_1 \oplus  W_2 \)  and \( (W')^\perp = W_1' \oplus  W_2'\). The three items are satisfied for the pair \(W_1, W_1'\) and we are done.
\end{proof}

\subsection{Turning around a line}

\begin{lemma} \label{l: proof of equivalence I}
Assume that \(\text{dim} (V) \geq 6\). Let \(W_1, W_1'\) be a pair of \(p\)-admissible submodules  of rank two such that  \(W_1\cap W_1' \neq \{0\} \). Then \(W_1'\) is equivalent to \(W_1\).
\end{lemma}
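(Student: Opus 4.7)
Setup: Pick a primitive generator $v$ of the rank-one intersection $W_1\cap W_1'$ and choose $w,w'\in V$ so that $W_1=\langle v,w\rangle$, $W_1'=\langle v,w'\rangle$, with $v\cdot w = v\cdot w' = 1$. Replacing $w'$ by $w'-(w\cdot w')v$ we may assume $w\cdot w' = 0$, so that $u:=w'-w$ lies in $v^\perp\cap w^\perp = W_1^\perp$ and, symmetrically, in $(W_1')^\perp$. Hence $u\in (W_1+W_1')^\perp$, and since $u\in W_1+W_1'$ as well, $u$ generates the one-dimensional radical of the symplectic form restricted to $W_1+W_1'$ (equivalently, of the form on $(W_1+W_1')^\perp$); note that $u\neq 0$ because $W_1\neq W_1'$.

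Bridging strategy: The plan is to produce a rank-two $p$-admissible symplectic submodule $W_2\subset (W_1+W_1')^\perp$ with $W_2\cap\langle u\rangle = 0$ (so that $W_2$ is itself non-degenerate, hence symplectic unimodular), for which both three-factor symplectic decompositions
\[
V = W_1\oplus W_2\oplus (W_1\oplus W_2)^\perp\quad\text{and}\quad V = W_1'\oplus W_2\oplus (W_1'\oplus W_2)^\perp
\]
are $p$-admissible. Such a $W_2$ yields two edges $W_1\sim W_2$ and $W_1'\sim W_2$ in the graph of $p$-admissible decompositions, hence the required $W_1\sim W_1'$.

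Existence of $W_2$: The module $(W_1+W_1')^\perp$ has rank $2g-3\geq 3$, its radical is $\langle u\rangle$, and the quotient $(W_1+W_1')^\perp/\langle u\rangle$ is a non-degenerate symplectic module of rank $2g-4\geq 2$. Any rank-two symplectic sublattice of this quotient lifts to a candidate $W_2$ in $(W_1+W_1')^\perp$ transverse to $\langle u\rangle$, the lift being free up to integer shifts in the $u$-direction. The $p$-admissibility of the two three-factor decompositions amounts to the three conditions $p|_{W_2}\neq 0$, $p|_{W_1^\perp\cap W_2^\perp}\neq 0$, and $p|_{(W_1')^\perp\cap W_2^\perp}\neq 0$. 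Applying Lemma~\ref{l: admissible element} to the rank-$(2g-2)\geq 4$ symplectic modules $W_1^\perp$ and $(W_1')^\perp$, the failures of the last two conditions correspond to $W_2$ containing one of two specific primitive lines of $(W_1+W_1')^\perp$, which are codimension-one conditions on $W_2$.

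Main obstacle: The key technical difficulty is the simultaneous fulfilment of the two orthogonal $p$-non-vanishing conditions. Naively they are independent, but the symplectic unimodularity constraint on $W_2$ imposes a $\pm 1$ determinantal relation between the $2\times 2$ minors of the generator matrix of $W_2$ in a basis of $(W_1+W_1')^\perp$, which can obstruct the direct satisfaction of both conditions by a single $W_2$ when the available freedom is tight. Resolving this requires carefully using the two sources of freedom -- the choice of rank-two symplectic sublattice in the rank-$(2g-4)$ quotient \emph{and} the integer $u$-shift in the lift -- to place $W_2$ outside both bad loci at once; for $g\geq 3$ these parameters provide enough room. When a single bridge is obstructed, one inserts an intermediate $p$-admissible rank-two submodule $\tilde W$, whose existence is guaranteed by Corollary~\ref{c:padmissible decompositions exist}, and applies the bridging construction twice to obtain $W_1\sim\tilde W\sim W_1'$.
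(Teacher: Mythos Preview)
Your bridging strategy is sound in spirit and close to the paper's, but the proof has a genuine gap: it does not cover the case where $p$ vanishes identically on $(W_1+W_1')^\perp$. This case is not pathological. Take $g=3$ with symplectic basis $a_1,b_1,a_2,b_2,a_3,b_3$, set $W_1=\langle a_1,b_1\rangle$, $W_1'=\langle a_1,b_1+a_2\rangle$, and define $p$ by $p(b_1)\neq 0$, $p(b_2)\neq 0$, $p(a_1)=p(a_2)=p(a_3)=p(b_3)=0$. Both $W_1$ and $W_1'$ are $p$-admissible, yet $(W_1+W_1')^\perp=\langle a_2,a_3,b_3\rangle$ lies entirely in $\ker p$. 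Hence \emph{no} rank-two $W_2\subset (W_1+W_1')^\perp$ satisfies $p|_{W_2}\neq 0$, and your three-factor decompositions $W_1\oplus W_2\oplus(\cdots)$, $W_1'\oplus W_2\oplus(\cdots)$ are never $p$-admissible. The assertion that ``for $g\geq 3$ these parameters provide enough room'' is therefore false as stated.

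Your fallback (``insert an intermediate $\tilde W$ and apply the bridging construction twice'') is not a proof: Corollary~\ref{c:padmissible decompositions exist} gives \emph{some} $p$-admissible $\tilde W$, but nothing about it forces $p$ to be non-trivial on $(W_1+\tilde W)^\perp$ or $(\tilde W+W_1')^\perp$, so you face the same obstruction; and if you instead want $W_1\cap\tilde W\neq 0$ and $\tilde W\cap W_1'\neq 0$ you are invoking the lemma you are proving. The paper resolves exactly this obstruction (its Case~2) by an explicit construction: it builds $W_1''=\langle a_1,\, b_1+\alpha_2 a_2+\sum_{i\ge 3}(m_i a_i+n_i b_i)\rangle$ sharing the line $\mathbb{Z}a_1$ with both $W_1$ and $W_1'$, with the integers $m_i,n_i$ chosen so that $p$ does \emph{not} vanish on $(W_1+W_1'')^\perp$ nor on $(W_1'+W_1'')^\perp$. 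Each of the pairs $(W_1,W_1'')$ and $(W_1',W_1'')$ then falls into a previously treated case, giving $W_1\sim W_1''\sim W_1'$. Something of this explicit nature is what your argument is missing.
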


\begin{proof}
The statement is obvious if \(W_1=W_1'\). Next suppose \(a_1\in V\) is a primitive element such that \( W_1\cap W_1' = \Z a_1\), \(b_1\in W_1\) such that \( W_1 = \Z a_1 \oplus  \Z b_1\). Let \(a_2\in W_1^\perp\) be a primitive element  such that \(W_1' = \Z a_1 \oplus  \Z (b_1 +\alpha _2 a_2)\) for some \(\alpha_2\in \Z\setminus 0\).

Complete \(a_,b_1,a_2\) into a symplectic basis \( a_1, b_1, \ldots, a_g, b_g\). We have \( (W_1\oplus W_1')^\perp = \Z a_2 \oplus   \sum _{i\geq 3} (\Z a_i \oplus  \Z b_i)\).  Notice that we have the following indeterminacy on the choice of basis: given elements  \(m_i, n_i \in \Z\) for \(i\geq 3\), define 
\[ a_1 '= a_1,\ b_1' = b_1,\ a_2' = a_2,\ b_2' = b_2 +\sum_{i\geq 3} n_i a_i -m_i b_i\]
and 
\[ a_i'= a_i + m_i a_2,\ b_i' = b_i +n_i a_2 \text{ for }i\geq 3. \]

We then have the symplectic decompositions \( V= W_1 \oplus  W_2 \oplus  W_3 \) and \( V= W_1'  \oplus  W'_2 \oplus  W_3\) with \( W_2=  \Z a_2  \oplus  \Z b_2 '\), \( W_2 ' = \Z a_2 \oplus \Z (\alpha_2 a_1 +b_2 ')\), and  \( W_ 3 = \sum _{i\geq 3} \Z a_i ' \oplus  \Z b_i' \).

\textit{Case 0:} If \(p( a_2 ) \neq 0\) it suffices to choose \( m_3\) such that \( p(a_3' ) \neq 0\). Then none of the restrictions of \( p\) to any of the modules \( W_1, W_2 , W_3 \) (resp. \( W_1 ', W_2 ', W_3\)) is trivial, hence \( W_1 \sim W_3 \sim W_1' \). \\

In the sequel we suppose $p(a_{2})=0$. For any choice of the \(m_{j}'s\) and \(n_{j}\)'s and any \(i\geq 3\) we have \(p(a_i')= p(a_i) \), \(p(b_i' )= p(b_i)\). \\

\textit{Case 1:} If \(p(W_{3})\neq 0\).\\
\textit{Sub-case 1.1:} If \(p(a_{1})=0\)  . Then, since \(W_{1}\) is \(p\)-admissible, \(p(b_{1})\neq 0\) and we can choose  \(m_i, n_i\in \Z\) appropriately to guarantee that \( p (b_2 ') \neq 0\). Since  \(p(\alpha_2 a_1 + b_2' )=p(b_{2}') \neq 0\) all submodules \(W_{1}, W'_{1}, W_{2}, W'_{2}, W_{3}\)  are \(p\)-admissible, so \(W_{1}\sim W_{3}\sim W_{1}'\).\\
\textit{Sub-case 1.2:} If \(p(a_{1})\neq 0\) .

\textit{Sub-subcase 1.2.1} If  \( p(W_{3}) \) contains at least three elements. We can still choose the \(m_i, n_i\in \Z\) appropriately to guarantee that  \( p(\alpha_2 a_1 + b_2' )\neq 0,\quad p(b_{2}') \neq 0\) to conclude as in the last case.\\

\textit{Sub-subcase 1.2.2:}  If \( p(W_{3}) \)  has at most two elements, i.e. it is either trivial or \(\mathbb{Z}/2\mathbb{Z}\). The intersection of \(\Z a_2 \oplus  W_3= (W_1+W_1')^\perp\) with the kernel of \(p\) contains a primitive rank two subgroup \(K\).  We then follow a route similar to that of Lemma \ref{l: creating intersection}. There exists an element in \(K\) which does not belong to \textit{ the union} of the spaces \( NA(p_{|(W_1)^\perp})\) and \(  NA(p_{|(W_1 ')^\perp})\) which have each rank bounded by one by Lemma \ref{l: admissible element}. Such an element belongs to a symplectic \(p_{|(W_1)^\perp}\)-admissible rank two submodule \( W_2 \subset (W_1)^\perp\), and to a symplectic  \(p_{|(W_1')^\perp}\)-admissible rank two submodule \( W_2 '\subset (W_1')^\perp\). We then have \(  W_2 \sim W_1\) and \( W_2' \sim W_1'\). By construction,  the intersection \(W_2\cap W_2'\) contains a non zero element in the kernel of \(p\) and \(v=a_1\) belongs to  \( (W_2 +W_2')^\perp\), and satisfies \(p(v)\neq 0\). By Case 0 or Sub-case 1.1 ,  we conclude that \( W_2 '\sim W_2\) and therefore \(W_1' \sim W_1\). 

\textit{Case 2:} If \(p(W_{3})=0\). We then have \(p(a_2)= p(a_i)= p(b_i) =0\) for \(i= 3,\ldots, g\)\\
\textit{Sub-case 2.1:} If \(p(a_{1})\neq 0\) we apply the same arguments of Sub-subcase 1.2.2\\
\textit{Sub-case 2.2:} If \(p(a_{1})=0\). Since \(W_{1}\) is \(p\)-admissible, \(p(b_{1}\neq 0\). In particular,  \( p(a_2)= p(a_i)=p(b_i)=0\) for \(i\geq 3\) and \(p(b_{1})\neq 0\).  Define \( W_1 '' = \Z a_1 \oplus  \Z (b_1 +  \alpha_2 a_2 + \sum _{i\geq 3} m_i a_i + n_i b_i)\), for some choices of \(m_i, n_i\) 's. The orthogonal of \( W_1'' \) contains the elements \[  a_2,  \alpha_2 a_1+ b_2 \text{ and for } i\geq 3 \text{ by } \alpha_2 a_i +n_i b_2 , \alpha_2 b_i -m_i b_2 \]
So since \(p(b_1+\alpha_2 a_2 + \sum _{i\geq 3} m_i a_i + n_i b_i) =p(b_1) \neq 0\) and \(p(\alpha_2 a_1+ b_2)=p(b_2)\neq 0\), \(W_1 ''\) is \(p\)-admissible. 

The module \( (W_1 + W_1'')^\perp \) contains  \(\alpha_2 a_3 + n_3 b_2\) for instance, so if \(n_3=1\), the restriction of \(p\) to \((W_1 + W_1'')^\perp \) does not vanish identically. In particular, by Cases 0 or 1 we conclude that \( W_1'' \sim W_1\). On the other hand, \((W_1')^\perp\) is the module generated by \( a_2, \alpha_2 a_1+b_2\) and the \(a_i, b_i\) 's for \(i\geq 3\), so \( (W_1'  + W_1'')^\perp \) contains \(\alpha_2 a_1 + b_2\) which does not belong to the kernel of \(p\). Again by  Cases 0 or 1 shows that \( W_1'' \sim W_1 '\). 

This finishes the proof.\end{proof}

\begin{proof}[Proof of Theorem \ref{t:graph of admissible}] By Lemma \ref{l: creating intersection}, any pair of \(p\)-admissible symplectic modules is equivalent to a pair of \(p\)-admissible modules that intersect on a non trivial subspace. We then deduce from Lemma \ref{l: proof of equivalence I}  that such a pair are equivalent, showing that the original two modules were equivalent as well. Hence the graph of \(p\)-admissible symplectic submodules of rank two is connected. The fact that the whole graph is connected comes from Corollary \ref{c: reduction to rank two}.
\end{proof}
\section{Proofs of main Theorems}



\begin{proof}[Proof of Theorem \ref{t:Cgn}: ]
By induction on the genus. The cases $g=1$ and $g=2$ are treated in Corollaries \ref{c:genus one case} and \ref{c: connectedness genus two}. Suppose $g\geq 3$ and Theorem \ref{t:Cgn} is true up to genus $g-1$. Let $p\in H^1(\Sigma_g,\mathbb{C}/\mathbb{Z})$ with $\deg(p)\geq 3$.

By Proposition \ref{p:degenerating to simple boundary point}, any point of $\per_g^{-1}(p)$ is equivalent to a simple boundary point with precisely one node. The homomorphism $[p]:H_1(\Sigma_g)\rightarrow\frac{\Z}{\frac{1}{2}\Z}$ is non-trivial by hypothesis. By Theorem \ref{t:graph of admissible} the graph of $[p]$-admissible decompositions is connected. Therefore, we can apply Proposition \ref{p:algebrization} to deduce that all simple boundary points are equivalent. In other words, the bordification of $\per^{-1}(p)$ is connected. By Corollary \ref{c:conn of closure equiv to conn} we deduce that $\per^{-1}(p)$ is connected. 
\end{proof}

\begin{proof}[Proof of Theorem \ref{t:leaf closures}:]
Apply the Transfer Principle (\cite{CDF2}) to transfer the properties of the action of the mapping class group on the image of $\per$ to properties of the isoperiodic foliation. The analysis of the action of the mapping class group of $\Sigma_g$ on $H^1(\Sigma_g,\mathbb{C}/\mathbb{Z})$ can be found in \cite[Proposition 4.2]{G} where Ratner's Theory is used to provide the described classification of the closures of the orbits. Moore's Theorem allows to deduce the ergodicity property.  
The closedness of the leaves with discrete $\Lambda$ is easy. If $\Lambda$ is infinite then the leaf cannot be algebraic, with respect to the Deligne-Mumford algebraic structure on moduli space. More precisely, let \(\Omega \overline{\mathcal M _{g,2}}  \) be the Hodge bundle over the Deligne-Mumford compactification.  In the boundary component \(\partial \subset \Omega \overline{\mathcal M _{g,2}}  \) consisting of forms having a node separating the curve in a curve of genus \(g\) and a rational curve containing the two marked points, let us consider the Zariski open subset \(\partial ^*\) defined by the curve of genus \(g\) being smooth. Given any lift \(P\) of \(p\) satisfying Proposition \ref{p: lifts}, let us introduce the subset \( \mathcal H_P\subset \partial^* \) consisting of  nodal forms constructed by attaching the form \( (\mathbb P^1, \frac{dz}{2i\pi z} ) \) with an abelian differential having a period equal to \(P\) after conveniently marking it.  Since the image of \(P\) is a lattice, the set \( \mathcal H_P\) is a copy of the a Hurwitz space of ramified coverings of degree \( \text{deg} (P):= \frac{\Re P \cdot \Im P }{\text{vol} (\C / \text{Im} (P)) } \) over the elliptic curve \( \C / \text{Im} (P)\). We claim that 
\[\overline{L}\cap \partial^* = \bigcup_{P \text{ lift of } p} \mathcal H _P , \]
which explains the reason why \(L\) is not algebraic since a union of a countable number of disjoint algebraic suvarieties is not algebraic. The inclusion \( \subset\) is merely the fact already mentioned that the limit of a sequence of elements of \(L\) is a nodal meromorphic differential whose period is equal to \(p\) in a certain marking. The inclusion \( \supset\) comes from the fact that one can deform isoperiodically a form in any element of \(\partial^*\) in the smooth part, together with the fact that, by Theorem \ref{t:Cgn}, the leaf \(L\) is exactly the set of element in \(\Omega \mathcal M_{g,2}\) having period \(p\) in a certain marking.
\end{proof}
\section{Appendix: The isoperiodic foliation on moduli spaces of stable meromorphic forms with simple poles}
In this appendix we review, adapt and extend Section 3 of \cite{CDF2} to the case of moduli spaces of stable meromorphic forms with simple poles.

\subsection{Augmented Teichm\" uller space and its stratification}
Recall the definition of the Augmented Teichm\" uller space from subsection \ref{ss:augmented teich definitions}
\begin{definition}
The augmented Teichm\" uller space $\overline{\mathcal{T}}_{g,n}$ is the set of all homotopically marked stable genus $g$ curves with $n$ marked points  up to equivalence. 
\end{definition}

The Teichm\" uller space $\mathcal{T}_{g,n}$ is the subset of  $\overline{\mathcal{T}}_{g,n}$ formed by curves without nodes. Its complement, denoted by $\partial\mathcal{T}_{g,n}=\overline{\mathcal{T}}_{g,n}\setminus \mathcal{T}_{g,n}$ is called the boundary. Given a subset $\mathcal{K}$ of $\overline{\mathcal{T}}_{g,n}$ its boundary is defined as $\partial \mathcal{K}:= \mathcal{C}\cap\partial\mathcal{T}_{g,n}$.

\begin{definition}
A curve system $c=\sqcup c_i$ in $\Sigma_{g,n}=(\Sigma_g,p_1,\ldots,p_n)$ is a disjoint collection of simple closed curves $c_i$ on $\Sigma_g\setminus \{q_1,\ldots,q_n\}$ none of which is isotopic to any other, to a point or to a cylinder in $\Sigma_g\setminus \{p_1,\ldots,p_n\}$. To a curve system $c$ we can associate the subset $B_{c}\subset \partial\mathcal{T}_g$ of the boundary consisting of homotopically marked stable curves topologically equivalent to a collapse $\Sigma_g\rightarrow \Sigma_g/c$ obtained by identifying each curve in $c$ to a distinct point (careful, we do not mean all curves in $c$ to a point) 
\end{definition}

Given a curve system $c$, for each component $\Sigma^{i}$ of $\Sigma_{g}\setminus c$ we define $(\Sigma_{g_i},P_i)$ to be the closed surface of genus $g_i$ with a set of marked points $P_i$ obtained by collapsing each boundary component of $\Sigma^i$ to a (marked) point and keeping the marked points of $\Sigma_{g}$ lying on $\Sigma^i$ in $Q_i$. By using attaching maps there is a natural identification  \begin{equation}B_{c}\cong\Pi_{i}\mathcal{T}_{g_i,n_i}.\label{eq:curvesystemdecomposition}\end{equation}
 
The boundary $\partial\mathcal{T}_{g,n}$ is the disjoint union of all boundary strata $\sqcup_{c}B_{c}$ where 
$c$ varies in the set of nonempty curve systems. $\mathcal{T}_{g,n}$ corresponds to the empty curve system. 
Each stratum $B_c$ has a topology and complex structure given by the bijection (\ref{eq:curvesystemdecomposition}). In particular it is connected.  All the curves that appear in a stratum have the same number of separating and non-separating nodes. When all the nodes are non-separating we say that the stratum is of compact type. 

Given a simple closed curve $c'\subset \Sigma_{g,n}$ we denote by $$D_{c'}=\bigsqcup_{c'\subset \curvesystem}B_{\curvesystem}$$ the union of all strata that collapse $c'$ to a node.

\subsection{Topology on $\overline{\mathcal{T}}_{g,n}$}
\label{ss:topology of Augmented teichm} 

For a detailed description of the topology we refer to \cite{ACG}[pp. 485-493] and references therein. 




The restriction of the given topology to $\mathcal{T}_{g,n}$ produces the so-called conformal topology. Abikoff showed (in \cite{Abikoff}[Theorem 1]) that this topology is equivalent to the Teichm\"uller topology. 

The restriction of the topology to the boundary set $B_{c}$ corresponding to a curve system $c$ is equivalent to the product topology obtained from (\ref{eq:curvesystemdecomposition}).



The topology is not locally compact around any boundary point. Indeed, if $U$ is a neighbourhood of a point in $B_c$, the action of the Dehn twist $\Delta_a:\Sigma_g\rightarrow \Sigma_g$ around a simple closed curve $a\in c$ fixes all the points in $U$ for which the marking collapses $a$ to a point, but has infinite orbits at any other point in $U$, see Remark \ref{rem:homotopically deform to dehn}. Therefore, there is no manifold structure in $\overline{\mathcal{T}}_{g,n}$ compatible with the given topology.
\begin{definition}
Given a curve system $c$ the distinguished neighbourhood of the stratum $B_{c}$ is the set   $$U_{c}=\bigsqcup_{c'\subset c}B_{c'}.$$
\end{definition}
We think of $U_c$ as the union of $\mathcal{T}_{g,n}$ with some of the boundary strata. These open sets will be useful to define and work with the complex structure on quotients of the Augmented Teichm\" uller space. 
\subsection{Complex structure and Deligne-Mumford-Knudsen compactification}
The mapping class group of $\Sigma_{g,n}$, i.e. the group $\text{Mod}(\Sigma_{g,n})$ of isotopy classes of orientation preserving diffeomorphisms that fix each marked point, 
acts on $\overline{\mathcal{T}}_{g,n}$ by homeomorphisms that preserve the stratification and are holomorphic in restriction to any stratum. The action is defined by pre-composition on the marking. The quotient $$\overline{\mathcal{M}}_{g,n}=\overline{\mathcal{T}}_{g,n}/\text{Mod}(\Sigma_{g,n})$$ is a compact topological space. 
It can be endowed with a complex orbifold structure. 

Consider a curve system $c$ and define  $\Gamma_c$ the abelian group generated by Dehn twists around the curves in $c$. Following \cite{Bers1}, the quotient $U_c/\Gamma_c$ is equivalent to a bounded domain in $\mathbb{C}^{3g-3}$. Under this equivalence, each stratum $B_{c'}$ associated to a simple closed curve $c'\subset c$ has image contained in a regular divisor $D_{c'}$. These divisors intersect normally and their intersections define the other different strata: the stratum associated to $c'\subset c$ is the intersection of all the divisors associated to the simple curves in $c'$. The complement of this divisor is the stratum $B_{\emptyset}=\mathcal{T}_{g,n}$ formed by smooth marked curves. 

The union of all natural maps $U_c/\Gamma_c\rightarrow \overline{\mathcal{M}}_{g,n}$ induce a system of (orbifold) charts with holomorphic transition maps on $\overline{\mathcal{M}}_{g,n}$. 

The boundary $\partial\mathcal{M}_{g,n}=\overline{\mathcal{M}}_{g,n}\setminus \mathcal{M}_{g,n}$ is a normal crossing divisor each of whose components correspond to the image of one of the  $D_c$'s of $\overline{\mathcal{T}}_{g,n}$ in the quotient $\overline{\mathcal{M}}_{g,n}$.

\subsection{The subgroup of \(\text{Mod}(\Sigma_{g,n})\) acting trivially on punctured homology}

Let $g,n\in \mathbb{N}$. Choose an oriented closed genus $g$ reference surface $\Sigma_g$ with a set of $n$ ordered distinct marked points $P=(p_1,\ldots,p_n)$. Denote by $\Sigma_{g,n^*}$ the (possibly nonclosed) surface  $\Sigma_{g}\setminus P$. We will use the following notation for certain relative homology groups $$H_1(\Sigma_{g,n^*})=H_1(\Sigma_g\setminus P,\mathbb{Z})$$
that we call punctured homology groups. The mapping class group $\text{Mod}(\Sigma_{g,n})$ induces an action on $H_1(\Sigma_{g,n^*})$ that fixes every element in the peripheral module, i.e, the module $\Pi_n$ generated by cycles turning positively once around each puncture $p_i$ for $i=1,\ldots, n$. 

The subgroup of $\text{Mod}(\Sigma_{g,n})$ that acts trivially on $H_1(\Sigma_{g,n^*})$ will be denoted by $\mathcal{I}(\Sigma_{g,n^*})$ and called the (punctured) Torelli group of $\Sigma_{g,n^*}$. For future reference we explicit the associated exact sequence of groups \begin{equation}
    \label{eq:exact seq torelli relative groups}0\rightarrow \mathcal{I}(\Sigma_{g,n^*})\rightarrow \text{Mod}(\Sigma_{g,n})\rightarrow \text{Aut}(H_1(\Sigma_{g,n^*}),\cdot)
\end{equation}
where the product preserved in homology is the intersection product. The action of the Dehn twist $\Delta_{\gamma}$ around a simple closed curve $\gamma$ in $\Sigma_{g,n^*}$ in $H_1(\Sigma_{g,n^*})$ is the module morphism defined by \begin{equation}\label{eq:dehn twist in homology}a\mapsto a+(a\cdot [\gamma])[\gamma]\end{equation}
The quotient will be denoted $$\overline{\mathcal{S}}_{g,n^*}:=\overline{\mathcal{T}}_{g,n}/\mathcal{I}(\Sigma_{g,n^*})$$ is what we call the augmented Torelli space of $\Sigma_{g,n^*}$. A point in this space will be denoted by a triple $(C,Q, [f])$ where $[f]$ denotes the equivalence class of the homotopical collapse map ${f}$ under the action of the corresponding Torelli group.  To each such point there corresponds an exact sequence  \begin{equation}0\rightarrow \ker [f]_*\rightarrow H_1(\Sigma_{g,n^*})\xrightarrow{[f]_*} H_1(C\setminus Q,\mathbb{Z})\rightarrow 0\label{eq:exseq homol marking}\end{equation} where $\ker [f]_*$ is the isotropic subgroup generated by the homology classes induced by the curves in the curve system $c$ collapsed by $f$ to the nodes of $C$. 

The action of the Torelli group on augmented Teichm\" uller space preserves the stratification. The class of each stratum $B_c$ in the quotient is characterized by the equivalence class $\curvesystem$ of curve system $c$ under the action of the Torelli group and denoted $B_{\curvesystem}$.

The open sets $U_c$ defined at the level of augmented Teichm\" uller space induce open sets that we denote $U_{\curvesystem}$ that are characterized by the Torelli class $\curvesystem$ of curve system $c$. The action in $U_{\curvesystem}$ of the group $\Gamma_{c}$ generated  by Dehn-twists around the curves in the curve system  coincides with that of the group generated only by non-separating curves of the curve system. Indeed, the Dehn twists around separating curves of $c$ define elements in $\mathcal{I}(\Sigma_{g,n^*})$ (even when the separating curve induces a non-trivial element in $H_1(\Sigma_{g,n^*})$).  The quotient $U_{\curvesystem}/\Gamma_{\curvesystem}$ is a complex manifold. In particular $U_{\curvesystem}$ is a complex manifold whenever all curves in $\curvesystem$ are separating. 

\subsection{The stratification of $\mnc_{g,n^{*}}$ and its dual boundary complex}

\begin{lemma}
The stratification induced on $\mnc_{g,n^{*}}$ by that of $\overline{\mathcal{T}}_{g,n}$ is a local abelian ramified covering of a normal crossing divisor and its dual boundary complex  $\mathcal{C}(\mnc_{g,n})$ is isomorphic to the quotient $\mathcal{C}_{g,n}/\mathcal{I}(\Sigma_{g,n^*})$ of the curve complex $\mathcal{C}_{g,n}$ of the surface $\Sigma_{g}$ deprived of the $n$ marked points.
\end{lemma}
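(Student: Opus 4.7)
The plan is to analyze the local model around a boundary point, and then to assemble the global information about connected components of strata in order to identify the dual complex.

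First I would fix a point $x\in\mnc_{g,n^{*}}$ lying in a boundary stratum, pick a preimage $y=(C,Q,f)\in\overline{\mathcal{T}}_{g,n}$, and let $c=c_{1}\sqcup\ldots\sqcup c_{k}$ be the curve system collapsed by $f$. The stabilizer of $y$ in $\text{Mod}(\Sigma_{g,n})$ is the free abelian group $\Gamma_{c}=\bigoplus_{i}\Z\,\Delta_{c_{i}}$, and the stabilizer of $y$ in $\mathcal{I}(\Sigma_{g,n^{*}})$ is the subgroup
\[
\Gamma_{c}^{0}:=\Gamma_{c}\cap\mathcal{I}(\Sigma_{g,n^{*}})=\Big\{\textstyle\prod\Delta_{c_{i}}^{n_{i}}\ :\ \sum_{i}n_{i}\,(a\cdot[c_{i}])\,[c_{i}]=0\ \forall a\in H_{1}(\Sigma_{g,n^{*}})\Big\}
\]
thanks to formula \eqref{eq:dehn twist in homology}. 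A suitable saturated neighborhood $U_{c}$ of $y$ in $\overline{\mathcal{T}}_{g,n}$ descends to a neighborhood $U_{c}/\Gamma_{c}^{0}$ of $x$ in $\mnc_{g,n^{*}}$, and further to a chart $U_{c}/\Gamma_{c}$ in $\overline{\mathcal{M}}_{g,n}$. By the Deligne-Mumford-Knudsen construction recalled in the appendix, this last chart is biholomorphic to a polydisc $\mathbb{D}^{3g-3+n}$ in which the boundary strata $B_{c'}$ for $c'\subset c$ are the intersections of coordinate hyperplanes $\{s_{i}=0\}_{i\in I}$; hence $U_{c}/\Gamma_{c}$ is already a normal crossing divisor model.

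Next I would identify the quotient group $\Gamma_{c}/\Gamma_{c}^{0}$ and describe its action on the chart. The disjointness of the $c_{i}$'s gives $[c_{i}]\cdot[c_{j}]=0$, so the transvections $a\mapsto a+(a\cdot[c_{i}])[c_{i}]$ commute and generate the image of $\Gamma_{c}$ in $\mathrm{Aut}(H_{1}(\Sigma_{g,n^{*}}))$. It follows that $\Gamma_{c}/\Gamma_{c}^{0}$ is a finitely generated abelian group whose rank equals the dimension of the $\Q$-span of the $[c_{i}]$'s in $H_{1}(\Sigma_{g,n^{*}},\Q)$; its action on $U_{c}/\Gamma_{c}^{0}$ is through rotations of the plumbing parameters $s_{i}$ attached to those $c_{i}$ satisfying $[c_{i}]\neq 0$, and is trivial on the remaining coordinates. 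This shows that $U_{c}/\Gamma_{c}^{0}\to U_{c}/\Gamma_{c}$ is precisely a local abelian ramified cover of a normal crossing divisor in the sense of Section 4 of \cite{CDF2}, which establishes the first assertion.

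For the dual complex statement, I would use that every boundary stratum $B_{c}$ of $\overline{\mathcal{T}}_{g,n}$ is connected (by the product decomposition \eqref{eq:curvesystemdecomposition}) and that $\mathcal{I}(\Sigma_{g,n^{*}})$ permutes the strata by its action on curve systems through the exact sequence \eqref{eq:exact seq torelli relative groups}. Therefore the connected components of the codimension $k$ stratum in $\mnc_{g,n^{*}}$ are in canonical bijection with the $\mathcal{I}(\Sigma_{g,n^{*}})$-orbits of curve systems of cardinality $k$, namely with the $(k-1)$-simplices of $\mathcal{C}_{g,n}/\mathcal{I}(\Sigma_{g,n^{*}})$. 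Since $B_{c'}$ lies in the closure of $B_{c}$ if and only if $c\subset c'$, the incidence relations on both sides match and assemble into the claimed simplicial isomorphism $\mathcal{C}(\mnc_{g,n^{*}})\simeq \mathcal{C}_{g,n}/\mathcal{I}(\Sigma_{g,n^{*}})$.

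The main obstacle I expect is the abelian-ramified-cover step, namely the explicit identification of $\Gamma_{c}/\Gamma_{c}^{0}$ and the verification that its action integrates to rotations on the plumbing parameters in a neighborhood where the plumbing coordinates $s_{i}$ are actually defined; this relies on the fact that on $\overline{\mathcal{T}}_{g,n}$ one has universal ``$\log s_{i}$'' coordinates on which $\Delta_{c_{i}}$ acts by the translation $\log s_{i}\mapsto \log s_{i}+2\pi i$, so that the quotient by $\Gamma_{c}^{0}$ realizes exactly the desired branched cover over the DM polydisc. Once this local picture is in place, the global bijection with the quotient curve complex is essentially combinatorial.
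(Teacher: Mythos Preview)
Your approach coincides with the paper's: pass to the distinguished neighborhood $U_{c}$, compare the quotient by $\Gamma_{c}\cap\mathcal{I}(\Sigma_{g,n^{*}})$ with the Deligne--Mumford chart $U_{c}/\Gamma_{c}$ to get the local abelian ramified cover structure, and then read off the dual complex from the $\mathcal{I}(\Sigma_{g,n^{*}})$-orbits of curve systems using connectedness of each $B_{c}$. Your write-up is in fact more explicit than the paper's about the structure of $\Gamma_{c}/\Gamma_{c}^{0}$ and the plumbing-parameter description of the branching.

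There is one step you assert without justification that the paper does address. You claim that the stabilizer of $y$ in $\mathcal{I}(\Sigma_{g,n^{*}})$ equals $\Gamma_{c}^{0}=\Gamma_{c}\cap\mathcal{I}(\Sigma_{g,n^{*}})$, and hence that $U_{c}/\Gamma_{c}^{0}$ is a genuine neighborhood of $x$ in $\mnc_{g,n^{*}}$. But a priori an element of $\mathcal{I}(\Sigma_{g,n^{*}})$ not lying in $\Gamma_{c}$ could still identify points of $U_{c}$ near $y$ (your sentence ``the stabilizer of $y$ in $\text{Mod}(\Sigma_{g,n})$ is $\Gamma_{c}$'' is not even true when the nodal curve has extra automorphisms). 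The paper fills this gap by invoking the fact that any automorphism of a stable curve acting trivially on homology is isotopic to a product of Dehn twists around the pinched curves; this is exactly what forces the local stabilizer in $\mathcal{I}(\Sigma_{g,n^{*}})$ to lie in $\Gamma_{c}$, hence in $\Gamma_{c}^{0}$, and guarantees that $U_{c}/\Gamma_{c}^{0}\to\mnc_{g,n^{*}}$ is a local homeomorphism along $B_{c}$. Once you insert this remark, your argument is complete and matches the paper's.
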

\begin{proof}
 The stratification of the open set $U_{c}$ is invariant under the action of the subgroup $\Gamma_{c}\cap\mathcal{I}(\Sigma_{g,n^{*}})\subset \Gamma_{c}$ and a local abelian ramified cover of a normal crossing divisor. Therefore, the quotient  on $U_{c}/(\Gamma_{c}\cap\mathcal{I}(\Sigma_{g,n^{*}}))$ has the same local property. On the other hand, as a consequence of the fact that any automorphism of a stable curve that acts trivially on homology is equivalent to some Dehn twist around the pinched curves, $U_{c}/(\Gamma_{c}\cap\mathcal{I}(\Sigma_{g,n^{*}}))\rightarrow U_{c}/\mathcal{I}(\Sigma_{g,n^{*}})\subset \mnc_{g,n^{*}}$ is a local homeomorphism at every point of the stratum $B_{c}$. 
 A connected component of stratum of $\mnc_{g,n^{*}}$ corresponds to one orbit of connected components of a stratum of $\overline{\mathcal{T}}_{g,n}$ under the action of $\mathcal{I}(\Sigma_{g,n^{*}})$. The isomorphism of complexes then  follows from the isomorphism $\mathcal{C}(\overline{\mathcal{T}}_{g,n})\simeq \mathcal{C}_{g,n}$. 

\end{proof}

\subsection{Moduli of stable forms with simple poles and its substratification} 

Let us denote by $\Omega\overline{\mathcal{M}}_{g,n}\rightarrow\overline{\mathcal{M}}_{g,n}$ the bundle whose fiber over a point $(C,Q)$ is the vector space of meromorphic stable forms on the marked curve $(C,Q)$ having at worst simple poles on the $n$ distinct ordered points of $Q$. In other words the space of meromorphic sections of the twisted line bundle $K_C(-(q_1+\ldots+q_{n}))$. A point in $\Omega\overline{\mathcal{M}}_{g,n}$ will be denoted by $(C,Q,\omega)$. When $(\omega)_{\infty}=q_1+\ldots+q_{n_1}$ we will omit $Q$ and write  $(C,\omega)$.  
In the next Lemma we analyze the domain and analytic properties of functions defined by integrating stable forms along certain paths: 

\begin{lemma}\label{l:general holomorphic integral}
Let $c$ be a curve system in $\Sigma_{g,n^*}$ and $U_c/\Gamma_c\rightarrow U\subset\overline{\mathcal{M}}_{g,n}$ a distinguished (orbifold) chart. Given  $\gamma$ a path in $\Sigma_g\setminus \{c,P\}$ (avoinding possible nodes and poles), the map $\Omega U_c\rightarrow \mathbb{C}$ defined by $$(C,Q,\{f\},\omega)\mapsto \int_{f_*(\gamma)}\omega\in\mathbb{C}$$ where $f$ is a representative in its isotopy class that collapses some curves of $c$, is well defined and invariant by the action of $\Gamma_c$. It induces a \text{holomorphic} map $\Omega (U_c/\Gamma_c)\rightarrow \mathbb{C}$.
\end{lemma}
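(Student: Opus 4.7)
The plan is to verify the three claims (well-definedness, $\Gamma_c$-invariance, holomorphicity) in order, with most of the work being a standard local trivialization for the third point.

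\textbf{Well-definedness.} First I would show that for a fixed marked stable curve $(C,Q,\{f\})\in U_c$ there always exists a representative $f$ of the homotopy class $\{f\}$ whose singular set (i.e. the preimage of the nodes) is exactly the curve system $c'\subset c$ that $f$ collapses, and any two such representatives are isotopic through markings of the same type. For such an $f$, the image $f_*(\gamma)$ is a continuous path in $C$ which avoids the nodes (since $\gamma$ avoids $c$) and the poles (since $\gamma$ avoids $P$ and $f(P)=Q$). Two isotopic representatives give paths that are homotopic in $C\setminus(\mathrm{Sing}(C)\cup Q)$, so the integral of $\omega$ (which is a meromorphic form with poles on $Q$ and possibly on the nodes) along $f_*(\gamma)$ is unchanged. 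This shows the map is well defined on $\Omega U_c$.

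\textbf{$\Gamma_c$-invariance.} Given a Dehn twist $\Delta_a$ around a simple closed curve $a\in c$, the action on $U_c$ sends $(C,Q,\{f\},\omega)$ to $(C,Q,\{f\circ\Delta_a^{-1}\},\omega)$. Since $\gamma$ is disjoint from $c$ in $\Sigma_g\setminus P$, it is in particular disjoint from $a$, so the Dehn twist acts trivially on $\gamma$ up to homotopy in $\Sigma_g\setminus\{c,P\}$. Thus $(f\circ\Delta_a^{-1})_*(\gamma)=f_*(\gamma)$ in the homotopy classes of paths in $C\setminus(\mathrm{Sing}(C)\cup Q)$, and the integral agrees. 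This invariance gives a factorization through the quotient $\Omega(U_c/\Gamma_c)$.

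\textbf{Holomorphicity.} Here I invoke the existence, over the orbifold chart $V:=U_c/\Gamma_c$, of a universal stable family of curves $\pi:\mathcal{C}\rightarrow V$ together with disjoint sections $\sigma_1,\ldots,\sigma_n:V\rightarrow\mathcal{C}$ giving the marked points, and a holomorphic relative dualizing sheaf $\omega_{\mathcal{C}/V}(\sigma_1+\ldots+\sigma_n)$ whose space of sections on each fiber is exactly $\Omega(C,Q)$. The total space of $\Omega V\rightarrow V$ is thus a holomorphic vector bundle, and a point of $\Omega V$ corresponds to a holomorphic family of meromorphic one-forms. Since $\gamma$ avoids both $c$ and $P$, the marking identifies a compact arc in $\mathcal{C}\setminus(\mathrm{Sing}(\pi)\cup\bigcup_i\sigma_i(V))$, which is a smooth open complex submanifold of $\mathcal{C}$. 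On a tubular neighborhood $\mathcal{U}$ of this arc we can find a holomorphic trivialization $\mathcal{U}\simeq \Delta\times V$ (where $\Delta$ is a disc containing an arc identified with $\gamma$) in which the universal form restricts to $F(z,\lambda)\,dz$ with $F$ jointly holomorphic in $z\in\Delta$ and $\lambda\in V$. Integrating over the compact arc, $\lambda\mapsto \int_\gamma F(z,\lambda)\,dz$ is holomorphic by standard differentiation under the integral sign. This is the induced map, which is therefore holomorphic on $\Omega V$.

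\textbf{Expected obstacle.} The only delicate point is having the holomorphic universal family and trivialization over a chart that includes boundary points; this is the standard content of the Deligne--Mumford--Knudsen construction and the fact that nodes lie on a normal crossing divisor, so away from them the family is a submersion and a tubular neighborhood of any compact arc not meeting the nodes or sections is holomorphically trivializable. Once that framework is set up, the holomorphicity reduces to differentiating a uniformly convergent integral of a holomorphic integrand, which is immediate.
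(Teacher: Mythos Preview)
Your argument for well-definedness and $\Gamma_c$-invariance matches the paper's (the paper compresses both into the single observation that $\gamma$ avoids every curve of $c$). The genuine difference is in the holomorphicity step. You work directly on the quotient chart: invoke the universal family $\mathcal C\to V=U_c/\Gamma_c$, trivialize a tube around the arc away from the nodal locus and the marked sections, and differentiate under the integral sign. The paper instead uses a removable-singularity argument: on $\Omega\mathcal T_{g,n}$ (the open stratum of smooth curves) the map is classically holomorphic; near a boundary point of $U_c/\Gamma_c$ the integral is visibly bounded (the path stays in a compact region where the forms vary continuously); since the boundary is an analytic divisor in the chart, Riemann's extension theorem gives holomorphicity across it for free.

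Both approaches are correct. Yours is more constructive and makes the dependence on the Deligne--Mumford machinery explicit, at the cost of setting up the universal family and a trivialization (which, as you note, requires covering the arc by finitely many product charts rather than a single one). The paper's route is shorter and sidesteps that setup entirely: once you accept that the boundary is a divisor, boundedness plus Riemann extension finishes the job in one line. The trade-off is that the paper's argument is less self-contained about \emph{why} the function is holomorphic on the smooth locus, whereas yours spells that out.
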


\begin{proof}
The map is well defined on $U_c/\Gamma_c$ because $\gamma$ does not intersect any of the simple closed curves in the curve system $c$ that is collapsed to the nodes of the curve. It is holomorphic around any point with finite value outside the boundary and bounded in the neighbourhood of every boundary point with finite value. On the other hand, we know that the boundary forms a divisor. Hence  by Riemann's extension Theorem we deduce that there is a unique holomorphic extension to the boundary. 
\end{proof}


The boundary stratification of the bundle $\Omega\overline{\mathcal{M}}_{g,n}$ by the number of nodes is substratified by the orders of the form at zeros and nodes. The order of $\omega$ at a regular point $q\in C$ is defined to be the $\text{ord}_q(f)$ where $\omega(z)=f(z)dz$ in a holomorphic coordinate $z:(C_1,q)\rightarrow \mathbb{C}$ around $q$. The order of $\omega$ at a node $q\in C$ is $$\text{ord}_q(\omega)=2+\text{ord}_q(\omega_{|C_1})+\text{ord}_q(\omega_{|C_2})$$ where $C_1$ and $C_2$ are the branches of $C$ at $q$. The order of the form at any point is clearly invariant by biholomorphism. 

\begin{definition}Two stable forms belong to the same substratum if there is a homeomorphism between the underlying curves with marked points preserving the order of the forms at each point (nodes to nodes, zero components to zero components,  isolated zeros (resp. poles) are preserved with the same order.
\end{definition}
Some strata admit a distinguished atlas produced by integration and a use of the Gauss-Manin connection to identify homology groups: 
\begin{lemma}\label{l:period coordinates}
Let  $(C,Q,\omega)\in \Omega\overline{\mathcal{M}}_{g,n}$ be a stable meromorphic form $\omega$ on a marked stable curve $(C,Q)$ having simple poles on $P(\omega)=Q$, discrete zero set $Z(\omega)$ and zero residues at the nodes $N(C)$. Then, integration on cycles produces a well defined map from a neighbourhood $R$ in its  stratum  of $1$-forms  
\begin{equation}\label{eq:restriction to stratum} R\rightarrow \operatorname{Hom}(H_1(C\setminus P(\omega),N(C)\cup Z(\omega);\mathbb{Z});\mathbb{C})\end{equation}
that is a homeomorphism onto its image.

\end{lemma}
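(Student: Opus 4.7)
The plan is to establish the classical \emph{period-coordinates} statement, which is well-known in the interior of strata of abelian and meromorphic differentials; the new content here is handling the boundary strata of stable forms, and in particular the prescribed vanishing of residues at non-separating nodes. I would divide the argument into three tasks: well-definedness of the evaluation map, injectivity, and local openness.

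For well-definedness, note that on a sufficiently small neighbourhood $R$ in the stratum through $(C,Q,\omega)$, the topological type of the form is constant: the dual graph of the underlying stable curve, the compact-type structure, the orders at isolated zeros and at nodes, and the list of zero components all remain the same. A smooth trivialization of the associated family of pointed surfaces (with distinguished points at poles, isolated zeros, and nodes) induces, via the Gauss-Manin connection, a canonical identification of $H_1(C'\setminus P(\omega'), N(C')\cup Z(\omega'); \mathbb{Z})$ with the fixed target group at the base point $(C,Q,\omega)$. Integrating $\omega'$ along paths representing a basis of relative cycles, chosen to avoid the poles and, near the boundary strata of $R$, to avoid the collapsing curves, defines the evaluation map; each coordinate is holomorphic by Lemma \ref{l:general holomorphic integral}.

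For injectivity and local openness I would use the translation-surface picture. Triangulate each nonzero component of the normalization $\hat C$ with vertices at its isolated zeros and preimages of nodes, and adjoin a decomposition of the semi-infinite cylinder surrounding each pole, so that every edge is a saddle connection or lies in a fundamental domain of such a cylinder. The form $\omega$ is then recovered from this combinatorial datum together with the edge-vectors $\int_e\omega$, which are precisely the values of the period map on a basis of relative cycles; two forms in $R$ sharing the same period therefore yield identical gluing data and define the same point in the stratum, giving injectivity. For local openness one reverses the construction: any sufficiently small perturbation of the edge-vectors that preserves the residue conditions (unit residue around each pole, zero residue around each non-separating node) can be reglued using the same combinatorics to produce a nearby stable form with the prescribed periods.

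The main obstacle I expect is consistency at non-separating nodes with zero residue: the plumbing parameter opening such a node is dictated by a pair of twin-path integrals on the two branches, and one must check that small perturbations of the edge-vectors are realized by genuine plumbing parameters compatible with the zero-residue constraint. Once this is handled, a dimension count via the long exact sequence of the pair $(C\setminus P(\omega), N(C)\cup Z(\omega))$ matches the rank of the target module with the local dimension of the stratum, so the holomorphic evaluation map is an open injection. Continuity of the polygonal reconstruction provides the inverse, yielding the desired homeomorphism onto the image.
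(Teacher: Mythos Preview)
Your overall plan---establishing period coordinates via a triangulation argument on the normalization---is a legitimate alternative to the paper's route, but the paragraph you flag as the ``main obstacle'' reveals a misconception that would derail the argument if left uncorrected.

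Within the stratum $R$, the nodes of $C$ do \emph{not} open. The stratum is defined by fixing the topological type of the underlying stable curve and the orders of the form at every special point, so every nearby $(C',Q',\omega')\in R$ has the same dual graph as $C$, with nodes in bijection with those of $C$. There is no plumbing parameter in play: deformation within $R$ is entirely captured by deforming the restriction of $\omega$ to each component of the normalization $\hat C$, treating the preimages of nodes as additional marked points that are free to move. The zero-residue condition at a node is simply the statement that the form is holomorphic at the two node-preimages on $\hat C$---a condition built into the stratum, not a constraint to be enforced under perturbation. So the ``consistency'' problem you anticipate does not arise.

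The paper exploits exactly this observation and thereby bypasses any direct flat-geometry argument in the nodal case. It writes $R$ locally as a product $R_1\times\cdots\times R_k$, where $R_j$ is a neighbourhood in the stratum of the $j$-th smooth component $(C_j,\,Q_j\cup A_j,\,\omega_j)$ of the normalization (here $A_j$ is the set of node-preimages on $C_j$), invokes the classical smooth-curve period coordinates on each factor, and then checks via an elementary relative-homology computation that
\[
\bigoplus_{j} H_1\big(C_j\setminus P(\omega_j),\,Z(\omega_j)\cup A_j;\Z\big)\;\cong\;H_1\big(C\setminus P(\omega),\,N(C)\cup Z(\omega);\Z\big).
\]
This reduces the lemma to the known smooth case in one stroke. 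Your triangulation argument, once the plumbing confusion is removed, would amount to reproving that smooth case on each $C_j$ and then assembling the pieces---valid, but longer, and the paper's reduction makes transparent why no new phenomenon appears at the nodes.
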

\begin{proof}
The result is well known if $C$ is smooth. (\cite{GK1,GK2}) There is a slight generalization in this case that will be useful for our purposes: the case where $C\setminus Q$ has also a set $A$ of $m$ marked points  (that can coincide or not with the zeros of $\omega$). Integration allows to define a map on the neighbourhood $R$ of $(C,Q\cup A, \omega)$ in its stratum of  $\Omega\mathcal{M}_{n+m}$ namely 
\begin{equation}
    R\rightarrow\text{Hom}( H_1(C\setminus Q, Z(\omega)\cup A, \mathbb{Z}); \mathbb{C}).
\end{equation}
It is also a homeomorphism onto its image.

Let us assume $C$ has some node.  Consider the normalization $\hat{C}$ marked by the pairs $N_j=\{n_j^+,n_j^-\}$ of points that produce the $j$'th node of $C$. Denote $N=\cup N_j $ and  $C_1,\ldots,C_k$ the (smooth) components of the normalization $\hat{C}$, $\omega_1,\ldots,\omega_k$ the restriction of $\omega$ to $C_j$, $Q_j=C_j\cap Q$ and $A_j=C_j\cap N$. Let $R$ denote the neighbourhood of $(C,Q,\omega)$ in its stratum and $R_j$ denote the neighbourhood of the point $(C_j,Q_j\cup A_j,\omega_j)$ in its stratum.  Up to reducing $R$ we can construct a homeomorphism  $$R_1\times\cdots\times R_k\rightarrow R$$
by applying the rule of attaching map given by that of $C$. Since each $(C_j,Q_j\cup A_j,\omega_j)$ is a non-zero form on a smooth compact curve with marked points we can apply the Lemma in the case of a smooth curve with marked points to deduce that, up to reducing the size of the $R_j$'s, we have a homeomorphism $$ R_1\times\ldots,\times R_k\simeq\bigoplus_{j=1}^k \text{Hom}(H_1(C_j\setminus P(\omega_j), Z(\omega_j)\cup A_j,\mathbb{Z});\mathbb{C}).$$
The latter is isomorphic to $$\text{Hom}\Big(\big( \bigoplus_{j=1}^k H_1(C_j\setminus P(\omega_j), Z(\omega_j)\cup A_j,\mathbb{Z})\big);\mathbb{C}\Big).$$ To finish the proof, we claim that there exists a natural isomorphism \begin{equation}\label{eq: relative homology pairs}\bigoplus_{j=1}^k H_1(C_j\setminus P(\omega_j), Z(\omega_j)\cup A_j,\mathbb{Z})\simeq H_1(C\setminus P(\omega),N(C)\cup Z(\omega),\mathbb{Z}).\end{equation} 
We sketch the last equivalence in the case of one node, and leave the recurrence argument for the reader. Suppose that $C$ has one node $N_1$ and its normalization one or two components $C_j$'s. Let $A_1$ the marked points in the normalization corresponding to $N_1$. The relative homology of the pair $\big((\sqcup C_j\setminus P(\omega_j))/A_1, \{A_1\}\cup Z(\omega) \big)$ is isomorphic to  the sum of relative homologies of the left hand side of \eqref{eq: relative homology pairs}. On the other hand the pair is homeomorphic to the  $(C\setminus P(\omega),N(C)\cup Z(\omega))$.
\end{proof}
\subsection{Homology coverings of bundles of stable forms and period map}
The pull back of the bundle $\Omega\overline{\mathcal{M}}_{g,n}$  by the (branched) cover $\overline{\mathcal{S}}_{g,n^*}\rightarrow \overline{\mathcal{M}}_{g,n}$ will be denoted  $$\Omega\overline{\mathcal{S}}_{g,n^*}\rightarrow\overline{\mathcal{S}}_{g,n^*}$$
A point in $\Omega\overline{\mathcal{S}}_{g,n^*}$ will be denoted by $(C,Q,[f],\omega)$. We want to describe the set of forms in this space for which all integrals on classes of $H_1(\Sigma_{g,n^*})$ are well defined complex numbers that coincide with those of some form on a \textit{smooth} curve. A problem that arises is that stable forms can have non-zero residues at nodes, and this does not allow to integrate any path passing through the node.
The Mayer-Vietoris exact sequence can be used to overcome this difficulty in the case of separating nodes.  Indeed, consider a family of separating curves $c_1,\ldots,c_{k}$ of some curve system $c$ and write $\Sigma_g\setminus \{c_1\cup\cdots\cup c_{k}\}=\Sigma^{(1)}\sqcup\ldots\sqcup\Sigma^{(\nu)}$, where each $\Sigma^{(i)}$ is connected and non-empty.  Each $\Sigma^{(i)}$ can be identified with some  $\Sigma_{g_i,n^*_i}$, a genus $g_i$ compact surface with $n_i$ punctures (we think of a boundary component as a point). For each $c_i$ we consider two peripheral curves \(\gamma_i^-,\gamma_i^+\) on each of its sides. Then \begin{equation}\label{eq:mayervietoris} H_1(\Sigma_{g,n^*})\cong\frac{H_1(\Sigma_{g_1,n^*_{1}})\oplus\cdots\oplus H_1(\Sigma_{g_{\nu},n^*_{\nu}})}{<[\gamma_i^+]+[\gamma_{i+1}^-]:i=1,\ldots,k-1>}\end{equation} where the class $[\gamma_i^-]$ (resp. $[\gamma_i^+]$) is in the homology group of the (punctured) component to which it belongs.

We deduce that every homology class in $H_1(\Sigma_{g,n^*})$ can be written as a sum of classes disjoint from the separating nodes. However this is not true for non-separating nodes. To be able to integrate a stable form on any class in $H_1(\Sigma_{g,n^*})$ we need to suppose that the residues are zero at non-separating nodes. The union of strata with this property will be useful for our purposes: $$\Omega_0\overline{\mathcal{S}}_{g,n^*}=\{(C,Q,[f],\omega)\in\Omega\overline{\mathcal{S}}_{g,n^*}: \text{Res}_q(\omega)=0\quad\forall \text{ non-separating node } q \text{ of } C\}.$$
Unfortunately this set is neither open (its interior is formed by stable forms on curves of compact type) nor closed (it is dense) in $\Omega\overline{\mathcal{S}}_{g,n^*}$. However it is the set where forms can be integrated in \textit{all} classes of $H_1(\Sigma_{g,n^*})$
\begin{definition}
Given $(C,Q,[f],\omega)\in\Omega_0\overline{\mathcal{S}}_{g,n^*}$, we define an element $p=\text{Per}(C,Q,[f],\omega)\in\text{Hom}(H_1(\Sigma_{g,n^*});\mathbb{C})$ called the period homomorphism of $(C,Q,[f],\omega)$ as follows: 
\begin{itemize}
    \item Consider the decomposition as in \eqref{eq:mayervietoris} using all separating nodes of $C$. 
    \item  Define  the homomorphism $$\bigoplus_{i=1}^{\nu}H_1(\Sigma_{g_i,n_{i}^*})\ni a\mapsto \int_{[f]_*(a)}\omega\in\mathbb{C}.$$ 
    \end{itemize}
    The opposite residue condition of stable forms at nodes implies that the homomorphism defines a homomorphism $p$ on the quotient on the right hand side of \eqref{eq:mayervietoris} that can be thought as an element in $\text{Hom}(H_1(\Sigma_{g,n^*});\mathbb{C})$.
 
\end{definition}

\begin{definition}
The period map on $\Omega_0\overline{\mathcal{S}}_{g,n^*}$ is the map $$\percompact_{g,n^*}:\Omega_0\overline{\mathcal{S}}_{g,n^*}\rightarrow \text{Hom}(H_1(\Sigma_{g,n^*});\mathbb{C})$$
sending each point to its period homomorphism. 
\end{definition}

By the decomposition in \eqref{eq:mayervietoris}, and  Lemma \ref{l:general holomorphic integral} the period map is holomorphic in the neighbourhood of a marked stable form on a curve of \textit{compact type}. Moreover if the residues at the nodes are all zero and the zeros of the form are isolated, the restriction of the period map to the stratum, written in the coordinates of Lemma \ref{l:period coordinates}, is just a linear projection. This implies that the period map is submersive in the neighbourhood of such a point (even in restriction to the stratum or to any smooth manifold containing it). In particular the local fiber at such a point is a regular holomorphic manifold transverse to any boundary component passing thorough it. 


This description of the local fibers of the period map is also true in more generality (possibly up to a branched cover) for forms with isolated zeros.
\begin{theorem}\label{t:local structure}

The local fiber of the period map in $\Omega_{0}\overline{\mathcal{S}}_{g,n^*}$  at a point $(C,Q,[f],\omega)\in \Omega_0\overline{\mathcal{S}}_{g,n^*}$ with isolated zeros, projects to the orbifold chart of $\Omega\overline{\mathcal{M}}_{g,n}$ as a complex manifold transverse to all boundary divisors through the point. Therefore, it is  an abelian ramified cover of a normal crossing divisor in $(\mathbb{C}^{2g+n-3},0)$ having precisely one component of codimension one in each component of codimension one of the ambient space through the point. \end{theorem}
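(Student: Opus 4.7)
The plan is to follow the strategy of Section 4.18 of \cite{CDF2} for holomorphic differentials, with extra care to handle separating nodes carrying non-zero residues and to exploit the hypothesis $\omega\in\Omega_0$ at non-separating nodes. First, working in the substratum $S$ of $\Omega_0\overline{\mathcal{S}}_{g,n^*}$ containing $x_0=(C,Q,[f],\omega)$, Lemma \ref{l:period coordinates} supplies local coordinates on $S$ given by integration against a basis of the relative homology group $H_1(C\setminus P(\omega), N(C)\cup Z(\omega);\mathbb{Z})$. Under the marking $[f]_*$ and the Mayer--Vietoris identification \eqref{eq:mayervietoris}, the restriction of $\percompact_{g,n^*}$ to $S$ becomes a linear surjection onto the relevant subspace of $\mathrm{Hom}(H_1(\Sigma_{g,n^*}),\mathbb{C})$. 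Hence the fiber through $x_0$ inside $S$ is an affine complex submanifold of the expected codimension, and the coordinates of $S$ that pair trivially with $H_1(\Sigma_{g,n^*})$ (i.e.\ the genuine relative ones) provide free coordinates along the fiber.

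Next, for each node $N_i$ of $C$ I would produce an isoperiodic smoothing direction via the standard plumbing construction $\{uv=q_i\}$ near $N_i$. For a separating node, no new absolute $1$-cycle is created; by Lemma \ref{l:general holomorphic integral} the absolute periods extend holomorphically in $q_i$ and remain equal to those of $\omega$, so the plumbing family is automatically isoperiodic. For a non-separating node, the hypothesis $\omega\in\Omega_0$ forces $\operatorname{Res}_{N_i}\omega=0$, so the period of the vanishing cycle $\delta_i$ extends to $0$ at $q_i=0$; an application of the implicit function theorem to the linear period map on $S$, using the relative period coordinates as free parameters, yields a holomorphic isoperiodic lift of the plumbing family. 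Combining the smoothing parameters $q_1,\ldots,q_k$ with period coordinates on $S$ produces local coordinates on $\Omega_0\overline{\mathcal{S}}_{g,n^*}$ near $x_0$ in which each boundary divisor reads $\{q_i=0\}$ and the isoperiodic fiber is cut out by the vanishing of the absolute period coordinates only, leaving each $q_i$ free. This yields the announced transversality and a single codimension-one component in each ambient boundary divisor through $x_0$.

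Finally, the abelian ramified cover structure comes from analyzing the Dehn-twist monodromy around each divisor $\{q_i=0\}$: the twist $\Delta_{c_i}$ acts on $q_i$ by a root of unity and descends non-trivially to $\overline{\mathcal{S}}_{g,n^*}$ exactly when $[c_i]\neq 0$ in $H_1(\Sigma_{g,n^*})$, which by \eqref{eq:dehn twist in homology} is the source of the branching. The main technical obstacle I anticipate is rigorously setting up the isoperiodic plumbing at a non-separating node while remaining inside $\Omega_0$, namely verifying that the standard period-preserving smoothing for stable meromorphic forms can be performed without reintroducing residues at the node being opened. This is a local computation essentially parallel to \cite{CDF2}; the hypothesis of isolated zeros is used here to guarantee that the relative periods supplied by Lemma \ref{l:period coordinates} give enough free coordinates on $S$ to invert the period constraint via the implicit function theorem.
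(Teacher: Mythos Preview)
Your approach diverges from the paper's and contains a gap in the second paragraph. The claim that for a separating node ``the plumbing family is automatically isoperiodic'' is not justified: Lemma~\ref{l:general holomorphic integral} says only that integration along a fixed cycle is holomorphic on the orbifold chart $\Omega(U_c/\Gamma_c)$, not that it is constant along any plumbing family. Smoothing a node produces a family of curves, not of forms; there is no canonical extension of $\omega$ to nearby fibres, and for a separating node with \emph{zero} residue (one that does not separate the marked points) no natural plumbed differential exists at all. Even at a polar separating node, where the standard cylinder-gluing does produce a form, you have not argued that the resulting one-parameter family together with your relative period coordinates on $S$ actually furnishes a coordinate system on the ambient space near $x_0$.

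The paper avoids all of this. It never constructs plumbing families; instead it works directly in the complex manifold $\Omega(U_c/\Gamma_c)$, where the boundary is already a normal crossing divisor, and factors the period map as a node-residue map $\text{NRes}$ followed by a map $h$ on the smooth level set $\text{NRes}^{-1}(p_{|\ker[f]_*})$. The key point is that $h$ is $\Gamma_c$-invariant---separating Dehn twists lie in $\mathcal{I}(\Sigma_{g,n^*})$ by \eqref{eq:dehn twist in homology}, while non-separating ones shift periods by multiples of a nodal period, which vanishes on this $\text{NRes}$-fibre---hence $h$ is holomorphic on the chart. Transversality then reduces to a single fact, Lemma~\ref{l:submersion on stratum}: $h$ is already submersive in restriction to the substratum. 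That lemma is proved by cutting $C$ along the polar nodes and applying the period coordinates of Lemma~\ref{l:period coordinates} on each piece, which is the decomposition you allude to in your first paragraph but do not carry out.
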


\begin{proof}
The local period fiber on $\Omega_{0}\mnc_{g,n^{*}}$ at  $(C,Q,[f],\omega)\in \Omega_0\overline{\mathcal{S}}_{g,n^*}$ can be lifted to a local period fiber $L\subset \Omega_{0}\overline{\mathcal{T}}_{g,n}$ of the map \begin{equation}\Omega_0\overline{\mathcal{T}}_{g,n}\rightarrow H^1(\Sigma_g,q_1,\ldots,q_n,\mathbb{C})\end{equation} that associates to any  stable form with zero residues at non-separating nodes its period homomorphism. If the lift of $(C,Q,[f],\omega)$  belongs to the stratum $\Omega B_{c}$, the covering group is the subgroup $\Gamma_{c}\cap\mathcal{I}_{g}\subset \Gamma_{c}$. Hence, to prove the result we just need to prove that $L$  is locally an abelian ramified cover over a normal crossing divisor. 

Let $U_{c}$ denote the distinguished neighbourhood of $B_{c}$.  It suffices in fact to show that at the level of the quotient $\Omega_{0}(U_{c}/\Gamma_{c})$ the period fiber is a holomorphic manifold transverse to every boundary component passing through the point. Thanks to the normal crossing condition of the ambient space, this is guaranteed if the period map is submersive in restriction to the (regular) stratum of the normal crossing divisor where the point belongs to. In fact we will not use the boundary stratum but a smaller regular submanifold, consisting of the substratum through the point $(C,Q,\{f\},\omega)\in  L\subset \Omega_0\overline{\mathcal{T}}_{g,n}$.


Next we prove the analyticity of $L/\Gamma_{\curvesystem}$ in two steps corresponding to the dual exact sequence of \eqref{eq:exseq homol marking}, namely $$0\rightarrow\text{Hom}(H_1(C\setminus Q),\mathbb{C})\rightarrow \text{Hom}(H_1(\Sigma_{g,n^*}),\mathbb{C})\rightarrow \text{Hom}(\ker [f]_*,\mathbb{C})\rightarrow 0. $$ Remark that by definition the intersection of any pair of elements of $\ker[f]_*$ is zero. 

\begin{enumerate}

    \item[1)] Integration on the(peripheral) curves of $\curvesystem$ and a use of Riemann-Roch's Theorem as in subsection 3.16 in \cite{CDF2} we show that there is a well defined map $$\text{NRes}:\Omega(U_{\curvesystem}/\Gamma_{\curvesystem})\rightarrow \text{Hom}(\ker[f]_*;\mathbb{C})$$ that is holomorphic  thanks to Lemma \ref{l:general holomorphic integral}. It extends the residue map at the  nodes of $C$ for forms in $\Omega(C)$. The same application of Riemann-Roch's Theorem allows to show that the (linear) restriction of $\text{NRes}$ to $\Omega(C)$ is surjective. 
   Denote by $p=\percompact(C,Q,[f],\omega)$ the period homomorphism. By construction $\text{NRes}(C,Q,[f],\omega)=p_{|\ker[f]_*}$.  The base point belongs to  the smooth complex manifold $\text{NRes}^{-1}(p_{|\ker[f]_*} )\subset \Omega_0(U_{\curvesystem}/\Gamma_{\curvesystem})$. Denote $\widetilde{\text{NRes}^{-1}}(p_{|\ker[f]_*})$ the lift of  $\text{NRes}^{-1}(p_{|\ker[f]_*})$ to $\Omega U_{c}$. It  is contained in $\Omega_0 U_{c}$, and by construction it is a union of fibers of the period map.  
    \item[2)]
      The restriction of the period map to $\widetilde{\text{NRes}^{-1}}(p_{|\ker[f]_*})$ is well defined. It is $\Gamma_{\curvesystem}$-invariant, because for every curve $c_i$ of the curve system $c$ we have that either its intersection with any other class is zero (if $c_i$ is separating) or the period along $c_i$ of a form in $\widetilde{\text{NRes}}^{-1}(p_{|\ker[f]_*})$ is zero. Therefore it induces a holomorphic map \begin{equation}\label{eq:holo per}h:\text{NRes}^{-1}(p_{|\ker[f]_*})\rightarrow \text{Hom}_{p_{|\ker [f]_*}}(H_1(\Sigma_{g,n^*}),\mathbb{C})\end{equation}  with image in the set $\text{Hom}_{p_{|\ker [f]_*}}(H_1(\Sigma_{g,n^*}),\mathbb{C})$ of homomorphisms that extend $p_{|\ker [f]_*}$ to a homomorphism on $H_1(\Sigma_{g,n^*})$. The fibers of $h$ are analytic sets. Locally at the base point the fiber is $L/\Gamma_{\curvesystem}$. 
     \end{enumerate}
     Next we prove that under the hypothesis of isolated zeros of $\omega$ the set $L/\Gamma_{\curvesystem}$ is smooth and transverse to each boundary component. This is a consequence of the stronger
     
     \begin{lemma}\label{l:submersion on stratum}
     The map $h$ defined by \eqref{eq:holo per} restricted to the intersection with the substratum at a point with isolated zeros is submersive .
     \end{lemma}
      
     Hence the local fiber $H$ of the map \eqref{eq:holo per} at the base point is a smooth complex manifold transverse to every smooth manifold containing the substratum of the base point. In particular $H$ is transverse to each irreducible boundary component passing through the base point. We deduce that  $\partial H$ is a normal crossing divisor in $H$, and moreover $\partial H$, $H$ and $H\setminus\partial H$ are connected.


\begin{proof}[Proof of Lemma \ref{l:submersion on stratum}:] Let $R$ denote a neighbourhood in its stratum of the form $(C,Q,\omega)\in\Omega\overline{\mathcal{M}}_{g,n}$ that has zero residues at the non-separating nodes. Recall that, if a composition of maps is submersive then the last map of the composition is submersive also. We want to show that  $$h_{|R}:R\cap \text{NRes}^{-1}(p_{|\ker[f]_*})\rightarrow \text{Hom}_{p_{|\ker[f]_*}}(H_1(\Sigma_{g,n^*});\mathbb{C})$$ 
is submersive whenever $\omega$ has isolated zeros.

If all nodes of $C$ have zero residue for $\omega$ we can apply Lemma \ref{l:period coordinates} and find coordinates in $R$ where the map $h_{|R}$ is written as the linear projection $$\text{Hom}(H_1(C\setminus Q,N(C)\cup Z(\omega);\mathbb{Z});\mathbb{C})\rightarrow \text{Hom}(H_1(C\setminus Q,;\mathbb{Z});\mathbb{C})$$ induced by inclusion $H_1(C\setminus Q,\mathbb{Z})\rightarrow H_1(C\setminus Q,  N(C)\cup Z(\omega),\mathbb{Z})$.  Therefore it is submersive. 

Suppose there is some node with non-zero residue. 
Let $PN(C)=\{N_1,\ldots, N_{k-1}\}$ denote the nodes of $C$ with non-zero residue for $\omega$ (i.e. polar nodes). All of them are separating by hypothesis. Write $C\setminus PN(C)$ as a disjoint union $\sqcup_{j=1}^\nu (C_j\setminus PN(C_j))$ where $C_j$ is a stable curve  where the restricted form $\omega_j=\omega_{|C_j}$ has only zero residues at the nodes and simple poles on the set $Q_j=(Q\cup PN(C))\cap C_j$. Let $R_j$ be the stratum of the form $(C_j,Q_j,\omega_j)$ in $\Omega\overline{\mathcal{M}}_{g_j,n_j}$, where $g_j$ is the genus of $C_j$ and $n_j$ the cardinality of $Q_j$. Consider the subset  $R'_j\subset R_j$ of forms having the same residues at the poles $PN(C_j)$ as $\omega_j$ has.

The attaching rule to obtain $C$ from the $C_j$'s can be used to define a holomorphic map $$\varphi: R'_1\times\cdots\times R'_k\rightarrow R\cap\text{NRes}^{-1}(p_{|\ker[f]_*}).$$ In its image we find only forms in the stratum having the same residues at the nodes corresponding to $PN(C)$ as $\omega$ has. We claim that $$h\circ \varphi:R'_1\times\cdots\times R'_k\rightarrow \text{Hom}_{p_{|\ker [f]_*}}(H_1(\Sigma_{g,n^*});\mathbb{C})$$ is submersive. 
Recall from Lemma \ref{l:period coordinates} that in each $R_j$ we have holomorphic coordinates with values in $\text{Hom}(H_1(C_j\setminus P(\omega_j), N(C_j)\cup Z(\omega_j),\mathbb{Z});\mathbb{C}).$ In the given coordinates, the set $R_j'$ corresponds to a codimension one or two linar subspace, corresponding to fixing the residues at the poles $PN(C_j)$. As before, the projection  \begin{equation}\label{eq:projection}\pi_j: \text{Hom}(H_1(C_j\setminus P(\omega_j), N(C_j)\cup Z(\omega_j),\mathbb{Z});\mathbb{C})\rightarrow \text{Hom}(H_1(C_j\setminus P(\omega_j),\mathbb{Z});\mathbb{C})\end{equation} is linear, surjective.   The restriction of $\pi_j$ to $R_j'$ is submersive onto the subspace $$\text{Hom}_{PN(\omega_j)}(H_1(C_j\setminus P(\omega_j),\mathbb{Z});\mathbb{C})$$ of homomorphisms whose value around the peripherals of $PN(C_j)$ is the same as that of $\omega_j$. 
The  projections allow to define a map on the cartesian product \begin{equation}\label{eq:cartesian R's} R_1'\times\cdots\times R'_k\rightarrow \bigoplus_{j=1}^k\text{Hom}_{PN(\omega_j)}(H_1(C_j\setminus P(\omega_j),\mathbb{Z});\mathbb{C})\end{equation} that is submersive. 
By using a similar argument as in equation \eqref{eq:mayervietoris}, but only with the separating nodes of non-zero residue, we deduce that the target space of the map \eqref{eq:cartesian R's} is homomorphic to $\text{Hom}_{p_{|\ker[f]_*}}(H_1(\Sigma_{g,n^*});\mathbb{C})$. 
In this expression, the map $h\circ\varphi$ is the map \eqref{eq:cartesian R's}, so we have proven that
 $h\circ \varphi$ is submersive. Therefore, so are $h_{|R}$ and $h$. 

\end{proof}
\end{proof}

\begin{corollary}\label{c:connected closure}
Let  $F\subset\Omega\mathcal{S}_{g,n^*}$ be a fiber of $\per_{g,n^*}$ and $\overline{F}$ its closure in $\Omega^*_0\overline{\mathcal{S}}_{g,n^*}$. Then $F$ is connected if and only if $\overline{F}$ is connected. \end{corollary}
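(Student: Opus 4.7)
The forward direction is immediate: the topological closure of a connected set is always connected. So the substance of the statement is the converse implication, namely that $\overline{F}$ connected forces $F$ connected. My plan is to upgrade a hypothetical disconnection of $F$ into a disconnection of $\overline{F}$ by using the local structure theorem at boundary points to show that no boundary point can ``glue together'' two disjoint pieces of $F$.

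Setup: by construction, $\overline{F}$ lives in the stratified space $\Omega^{\pm*}_{0}\overline{\mathcal{S}}_{g,n^*}$, in which $\Omega \mathcal{S}_{g,n^*}$ is the open top stratum. Hence $F = \overline{F}\cap \Omega \mathcal{S}_{g,n^*}$ is open in $\overline{F}$, and by definition $F$ is dense in $\overline{F}$. Suppose now, for contradiction, that $F = F_1 \sqcup F_2$ with $F_1, F_2$ non-empty, disjoint, and open in $F$; since $F$ is open in $\overline{F}$, the $F_i$ are also open in $\overline{F}$. Let $\overline{F_i}$ denote the closure in $\overline{F}$. Density of $F$ in $\overline{F}$ gives $\overline{F_1}\cup \overline{F_2} = \overline{F}$, and connectedness of $\overline{F}$ then yields a point $x \in \overline{F_1}\cap \overline{F_2}$.

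I would then argue that $x$ cannot lie in $F$: if $x\in F_1$ (say), the neighborhood $F_1$ of $x$ in $\overline{F}$ would be disjoint from $F_2$, contradicting $x \in \overline{F_2}$. So $x$ belongs to the boundary $\overline{F}\setminus F$. Since $x \in \Omega_0^{\pm*}\overline{\mathcal{S}}_{g,n^*}$, the form at $x$ has isolated zeros, so Theorem \ref{t:local structure} applies and supplies a neighborhood $U$ of $x$ in $\overline{F}$ realized as an abelian ramified cover of a normal crossing divisor in a polydisc $V\subset\mathbb{C}^{2g+n-3}$, transverse to each ambient boundary divisor through $x$. In particular $U\cap F$ is precisely the preimage of the complement of the normal crossing divisor in $V$.

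The crucial topological fact is that $U\cap F$ is connected: the local model of the covering is of the form $(w_1,\ldots,w_N)\mapsto (w_1^{b_1},\ldots, w_k^{b_k}, w_{k+1},\ldots,w_N)$, so $U\cap F$ corresponds to the complement of the normal crossing locus $\{w_1\cdots w_k=0\}$ in a polydisc, which is path-connected. Now both $U\cap F_1$ and $U\cap F_2$ are non-empty (because $x$ lies in both closures $\overline{F_i}$) and open in $U\cap F$, and their disjoint union exhausts $U\cap F$. This contradicts connectedness of $U\cap F$, completing the proof.

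I do not expect a genuine obstacle here: all the delicate analytic work has been absorbed into Theorem \ref{t:local structure}. The only points requiring mild verification are the purely topological facts that $F$ is open and dense in $\overline{F}$ (immediate from the definitions) and that the smooth locus of a local abelian ramified cover of a normal crossing divisor is connected (immediate from the explicit local model recalled above).
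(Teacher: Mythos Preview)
Your proof is correct and follows essentially the same approach as the paper: both rely on Theorem \ref{t:local structure} to conclude that near any boundary point of $\overline{F}$ the intersection with the open stratum $F$ is connected. Your separation argument is in fact slightly cleaner than the paper's, which instead chooses a path in $\overline{F}$ between two points of $F$ (after asserting path-connectedness of $\overline{F}$) and uses the same local connectedness along the path; your version avoids invoking path-connectedness and handles the forward direction by the trivial observation that closures of connected sets are connected.
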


\begin{proof}

All the points in $\overline{F}$ have the same value for $\percompact_{g,n^*}$. Suppose $F$ is connected. Thanks to Theorem \ref{t:local structure}, the connected component of $\overline{F}$ at any boundary point contains points of $F$. Hence $\overline{F}$ is connected. For the converse, suppose $\overline{F}$ is connected. It is also path connected. Let $t\mapsto\gamma(t)$ be a path in $\overline{F}$ joining a pair of points of $F$. By  Theorem\ref{t:local structure} there are points of $F$ in any neighbourhood of any of the points $\gamma(t)$. All of them lie in the same component of $F$. Hence the starting and endpoint lie in the same connected component of $F$. This shows that $F$ is path connected.
\end{proof}

\end{document}